\theoremstyle{plain}
\newtheorem{theorem}{Theorem}
\newtheorem{lemma}[theorem]{Lemma}
\newtheorem{proposition}[theorem]{Proposition}
\newtheorem{corollary}[theorem]{Corollary}
\theoremstyle{definition}
\newcommand{\x}{{\mathbf x}}
\newcommand{\bc}{\mathbb{C}}
\newcommand{\bp}{\mathbb{P}}
\newcommand{\bq}{\mathbb{Q}}
\newcommand{\bz}{\mathbb{Z}}
\newcommand{\modm}{\mathcal{M}}
\newcommand{\res}[1]{\begin{array}[d]{l}\\{\rm Res}\\^{#1}\end{array}\hspace{-1mm}}
\begin{document}

\title{Gromov-Witten invariants of $\bp^1$ and Eynard-Orantin invariants.}
\author{Paul Norbury \and Nick Scott}
\address{Department of Mathematics and Statistics, The University of Melbourne, Victoria 3010, Australia}
\email{pnorbury@ms.unimelb.edu.au, N.Scott@ms.unimelb.edu.au}

\subjclass[2000]{14N35; 32G15; 30F30; 05A15}
\date{August 22, 2011}
\begin{abstract}
We prove that stationary Gromov-Witten invariants of $\bp^1$ arise as the Eynard-Orantin invariants of the spectral curve $x=z+1/z$, $y=\ln{z}$.  As an application we show that tautological intersection numbers on the moduli space of curves arise in the asymptotics of large degree Gromov-Witten invariants of $\bp^1$.
\end{abstract}

\maketitle

\tableofcontents

\section{Introduction} \label{introduction}
As a tool for studying enumerative problems in geometry Eynard and Orantin \cite{EOrInv} define invariants of any compact Torelli marked Riemann surface $C$, equipped with two meromorphic functions $x$ and $y$ with the property that the branch points of $x$ are simple and the map
\[ \begin{array}[b]{rcl} C&\to&\bc^2\\p&\mapsto& (x(p),y(p))\end{array}\]
%\begin{align*} C&\to\bc^2\\ p&\mapsto (x(p),y(p)) \end{align*}
is an immersion.  For every $(g,n)\in\bz^2$ with $g\geq 0$ and $n>0$ the Eynard-Orantin invariant $\omega^g_n(p_1,...,p_n)$ for $p_i\in C$ is a multidifferential, i.e. a tensor product of meromorphic 1-forms on the product $C^n$.  One can make sense of $F^g=\omega^g_0$ using a recursion between $\omega^g_{n+1}$ and $\omega^g_n$ known as the dilaton equation.  See Section~\ref{sec:EO} for more details and the definition of the invariants.

Important examples of the Eynard-Orantin invariants, using different choices of $(C,x,y)$, store intersection numbers over the moduli space of curves \cite{EynRec}; simple Hurwitz numbers \cite{BEMSMat,BMaHur,EMSLap}; a count of lattice points in the moduli space of curves \cite{NorStr}; and conjecturally local Gromov-Witten invariants of (non-compact) toric Calabi-Yau 3-folds \cite{BKMPRem,MarOpe} and Chern-Simons invariants of 3-manifolds \cite{DFMVol}.

The Gromov-Witten invariants of $\bp^1$ have been studied and well understood over the last ten years \cite{GetTod,OPaGro,OPaGro2,OPaVir}.  In this paper we show that the Gromov-Witten invariants of $\bp^1$ arise as Eynard-Orantin invariants, and how this approach brings new insight to the Gromov-Witten invariants.  We also hope to gain a better understanding of the 
Eynard-Orantin invariants.  The example in this paper, together with the simple Hurwitz problem \cite{EMSLap} and the count of lattice points in the moduli space of curves which also corresponds to a Hurwitz problem \cite{NorStr}, raises the question: is the relationship of  Eynard-Orantin invariants to Hurwitz problems a more general phenomenon?

Assemble the connected stationary Gromov-Witten invariants
\begin{equation}\label{gropoint}
\left\langle \prod_{i=1}^n \tau_{b_i}(\omega)\right\rangle^g_d=\int_{[\overline{\modm}^g_n(\bp^1,d)]^{vir}}\prod_{i=1}^n \psi_i^{b_i}ev_i^\ast(\omega)
\end{equation}
where $d$ is determined by $\sum_{i=1}^n b_i=2g-2+2d$, into the generating function
\[\Omega^g_n(x_1,...,x_n)=\sum_{\bf b}\left\langle \prod_{i=1}^n \tau_{b_i}(\omega) \right\rangle^g_d\cdot\prod_{i=1}^n(b_i+1)!x_i^{-b_i-2}dx_i\]
which is a multidifferential.  See Section~\ref{sec:GW} for a more detailed definition of  Gromov-Witten invariants.

The Eynard-Orantin invariants $\omega^g_n$ are defined for any genus 0 compact Riemann surface $C$ equipped with two {\em meromorphic} functions $x$ and $y$.   Nevertheless, by taking sequences of meromorphic functions one can extend the definition to allow $y$ to be any analytic function defined on a domain of $C$ containing the branch points of $x$.  In particular, consider
\begin{equation}  \label{eq:lnz}
C=\begin{cases}x=z+1/z\\ 
y=\ln{z}.
\end{cases}
\end{equation} 
The Riemann surface $C$ is defined via the meromorphic function $x(z)$.  The function $y(z)=\ln{z}\sim\sum\frac{(1-z^2)^k}{\hspace{-3mm}-2k}$ is to be understood as the sequence of partial sums $y_N=\displaystyle{\sum_1^N}$$\frac{(1-z^2)^k}{\hspace{-3mm}-2k}$.  Each invariant requires only a finite $y_N$---for fixed $(g,n)$ the sequence of invariants $\omega^g_n$ of $(C,x,y_N)$ stabilises for $N\geq 6g-6+2n$.
%\vspace{-.8cm}
\begin{theorem}   \label{th:main}
For $g=0$ and 1 and $2g-2+n>0$, the Eynard-Orantin invariants of the curve $C$ defined in (\ref{eq:lnz}) agree with the generating function for the Gromov-Witten invariants of $\bp^1$:
\[ \omega^g_n\sim\Omega^g_n(x_1,...,x_n).\]
More precisely, $\Omega^g_n(x_1,...,x_n)$ gives an analytic expansion of $\omega^g_n$ around a branch of $\{x_i=\infty\}$.
\end{theorem}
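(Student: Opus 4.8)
The plan is to prove the identity by showing that the generating differentials $\Omega^g_n$ obey the Eynard--Orantin topological recursion for the spectral curve (\ref{eq:lnz}), with the appropriate low $(g,n)$ initial data; since the $\omega^g_n$ satisfy that recursion by construction, this will force $\omega^g_n\sim\Omega^g_n$ for $g=0,1$. First I would fix coordinates: identify $C$ with $\bp^1$ via $z$, note that $x=z+1/z$ has exactly the two simple branch points $z=\pm1$ (the zeros of $dx=(1-z^{-2})dz$), with deck transformation the global involution $\sigma(z)=1/z$ fixing these points; writing $z=e^\theta$ gives $x=2\cosh\theta$ and $y=\theta$, so (\ref{eq:lnz}) is the Landau--Ginzburg mirror curve of $\bp^1$. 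The Bergman kernel is $B(z_1,z_2)=dz_1dz_2/(z_1-z_2)^2$, and the recursion is seeded by $\omega^0_1=y_N\,dx$ (a polynomial multiple of $dz$, for $N$ large) and $\omega^0_2=B$. The divisor $x=\infty$ has the two preimages $z=0,\infty$; expanding around $z=\infty$, where $x^{-1}\sim z^{-1}$ and $dx\sim dz$, is precisely what converts a meromorphic $\omega^g_n$ (regular at $z=\infty$) into the series $\Omega^g_n$ in $\prod x_i^{-b_i-2}dx_i$, explaining the phrase ``analytic expansion around a branch of $\{x_i=\infty\}$'' in the statement.

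Next I would settle the base cases. On the $n\le2$ level one reconciles the inputs $\omega^0_1,\omega^0_2$ with the corresponding generating series, accounting for the standard discrepancy $\frac{dx_1dx_2}{(x_1-x_2)^2}$ between $B$ and the connected two-point differential. The first cases genuinely asserted by the theorem are $(g,n)=(0,3)$ and $(1,1)$: for these I would use the closed residue formulas the recursion gives directly --- $\omega^0_3$ from $B$ alone, and $\omega^1_1$ from $B$ near the diagonal together with the Taylor expansion of $y$ at $z=\pm1$ --- and match them against the low-$n$ stationary Gromov--Witten numbers of $\bp^1$ recalled in Section~\ref{sec:GW}.

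For the inductive step (increasing $n$ with $g\in\{0,1\}$ fixed) I would run two recursions in parallel. On the Eynard--Orantin side I would use the explicit genus-$0$ formula for $\omega^0_n$ as a sum over trivalent trees decorated by the disk amplitude $\omega^0_1$ and copies of $B$, and the analogous genus-$1$ formula built from the genus-$0$ data. On the Gromov--Witten side I would invoke the structural description of stationary descendents of $\bp^1$ --- the Toda/operator formalism of Okounkov--Pandharipande, equivalently the Virasoro constraints \cite{OPaGro2,OPaVir} --- which presents the stationary $n$-point functions as vacuum expectations in the infinite wedge and hence yields a recursion in $(g,n)$ of the same shape as the topological recursion. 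The link is a residue computation at $z=\pm1$: expanding the recursion kernel there produces exactly the weights $\frac{\sinh(u/2)}{u/2}$ that govern the completed-cycle form of Gromov--Witten theory of $\bp^1$, and these are precisely what the choice $y=\ln z$ builds in. Matching coefficients of $\prod x_i^{-b_i-2}dx_i$ on the two sides then delivers the normalising factor $\prod(b_i+1)!$ in the definition of $\Omega^g_n$ along with the asserted equality.

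The principal obstacle is this last translation: reconciling the branch-point residue bookkeeping native to the Eynard--Orantin recursion with the infinite-wedge bookkeeping native to stationary $\bp^1$ theory, whose $n$-point functions are most naturally expanded at $x=\pm2$ rather than at the branch points, and checking that every combinatorial weight, the $(b_i+1)!$ normalisation, and the choice of a branch of $\{x_i=\infty\}$ agree exactly. A secondary but necessary technical point, which makes the statement an analytic rather than merely formal identity, is to confirm that each $\omega^g_n$ is meromorphic with poles confined to $z=\pm1,0,\infty$ of controlled order, so that only the partial sums $y_N$ with $N\ge 6g-6+2n$ contribute and $\Omega^g_n$ is genuinely the Laurent expansion of $\omega^g_n$ at $z=\infty$.
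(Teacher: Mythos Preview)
Your approach is genuinely different from the paper's, and as written it stops precisely at the step you yourself flag as the ``principal obstacle,'' so it is a plan rather than a proof.

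The paper does \emph{not} attempt to show that $\Omega^g_n$ satisfies the Eynard--Orantin recursion. Instead it runs a much lighter uniqueness argument. On the EO side, it proves (Theorem~\ref{th:Mgenus0}) that the expansion coefficients $M^g_n$ of $\omega^g_n$ at $x=\infty$ satisfy \emph{divisor} and \emph{string} equations (\ref{Mdivisor})--(\ref{Mstring}), derived from the general identities (\ref{EOstring})--(\ref{EOdilaton}) specialised to $y=\ln z$. On the GW side, it first proves (Proposition~\ref{GWquasi}) that the stationary invariants have the polynomial form (\ref{eq:quasi1}), and then (Propositions~\ref{evalatzero}, \ref{evalatder}) that inserting $\tau_0(1)$ or $\tau_1(1)$ amounts to evaluating or differentiating the polynomial $p^g_{n,k}$ at $0$, yielding string and dilaton equations purely among stationary invariants. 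The uniqueness engine is the elementary Lemma~\ref{th:sympol}: a polynomial of degree $<n$ (resp.\ $=n$), symmetric in two blocks of variables, is determined by its restriction to one coordinate from each block (resp.\ determined up to a constant). For $g=0$ this already pins down both sides from the string and divisor equations; for $g=1$ the residual constant is the coefficient of $u_1\cdots u_n$, computed on each side via the top-coefficient formula (\ref{Mcoeff}) and the genus-one topological recursion relation. One then only has to check the seeds $(g,n)=(0,3)$ and $(1,1)$ explicitly.

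Your route --- matching the EO residue recursion directly against the infinite-wedge/completed-cycle description --- is conceptually sound and is the strategy that was later carried out to prove the statement in all genera, but the translation you identify as the obstacle is the entire content of that argument, and nothing in your sketch says how to do it. In particular, the $\sinh(u/2)/(u/2)$ weights are the \emph{output} of a careful Laplace-transform computation at the branch points, not something one can simply invoke. The paper's approach buys a self-contained proof for $g\le 1$ using only string/divisor/dilaton and polynomiality, at the cost of the genus restriction; your approach, if completed, would buy all genera at once --- which is why the paper itself sketches a version of it, non-rigorously via a matrix integral, in Section~\ref{sec:mat}.
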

In the two exceptional cases $(g,n)=(0,1)$ and $(0,2)$, the invariants $\omega^g_n$ are not analytic at $x_i=\infty$.  We can again get analytic expansions around a branch of $\{x_i=\infty\}$ by removing their singularities at $x_i=\infty$ as follows:  
\begin{equation}  \label{eq:excep} 
\omega^0_1+\ln{x_1}dx_1\sim\Omega^0_1(x_1),\quad\omega^0_2-\displaystyle\frac{dx_1dx_2}{(x_1-x_2)^2}\sim\Omega^0_2(x_1,x_2).
\end{equation}

Theorem~\ref{th:main} gives an extremely efficient way to calculate the Gromov-Witten invariants of $\bp^1$.  It also produces a general form of the invariants that reduces to the calculation of a collection of polynomials.
\begin{theorem}\label{th:GWquasi}For $g=0$ and 1, the stationary Gromov-Witten invariants of $\bp^1$ are of the form
\begin{equation}\label{eq:quasi}
\left\langle \prod_{i=1}^k\tau_{2u_i}(\omega)\prod_{i=k+1}^n\tau_{2u_i-1}(\omega)\right\rangle^g=\frac{u_{k+1}\cdots u_n}{\prod_{i=1}^n u_i!^2}p^g_{n,k}(u_1,\dots,u_n)
\end{equation}
where $p^g_{n,k}(u_1,\dots,u_n)$ is a polynomial of degree $3g-3+n$ in the $u_i$'s, symmetric in the first $k$ and the last $n-k$ variables, with top coefficients $c_{\beta}$ of $u_1^{\beta_1}\cdots u_n^{\beta_n}$ given by 
\begin{equation}  \label{eq:gwcoeff}
c_{\beta}=2^g\int_{\overline{\modm}_{g,n}}\psi_1^{\beta_1}...\psi_n^{\beta_n}\end{equation}
for $|\beta|=3g-3+n$. 
\end{theorem}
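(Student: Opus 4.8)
The plan is to deduce the structural statement about the stationary Gromov-Witten invariants directly from Theorem~\ref{th:main}, by reading off the local behaviour of the Eynard-Orantin invariants $\omega^g_n$ of the curve $C: x=z+1/z$, $y=\ln z$ near the two branch points of $x$. First I would recall that the branch points of $x(z)=z+1/z$ are $z=\pm 1$, i.e.\ $x=\pm 2$, and that the Eynard-Orantin recursion expresses each $\omega^g_n$ as a sum of residues at these two branch points of a kernel built from $\omega^0_2$, the recursion kernel $K(z_0,z)$, and lower $\omega^{g'}_{n'}$. The key point is that the local coordinate at $z=\pm1$ is $t_\pm$ with $x\mp 2 = \pm t_\pm^2$, so every $\omega^g_n$ is, in each variable, an odd-in-$t_\pm$ meromorphic form with poles only at the branch points; its expansion in the variable $x_i$ around $x_i=\infty$ therefore has a definite parity structure. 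The exponents $b_i$ appearing with nonzero coefficient in $\Omega^g_n$ come in the two families $b_i=2u_i$ and $b_i=2u_i-1$, and the normalisation $(b_i+1)!\,x_i^{-b_i-2}dx_i$ in the definition of $\Omega^g_n$ is exactly what converts the coefficients of the principal parts of $\omega^g_n$ at the branch points into the factor $u_{k+1}\cdots u_n/\prod u_i!^2$ times something polynomial. So the first block of steps is: (i) identify $K$ and $\omega^0_2$ for this curve explicitly; (ii) show by induction on $2g-2+n$ via the recursion that $\omega^g_n$ has poles only at $z_i=\pm1$ of order at most $6g-4+2n$ (matching the stabilisation bound $N\ge 6g-6+2n$ mentioned in the introduction); (iii) translate the order-and-parity data into the claimed closed form \eqref{eq:quasi}, with $p^g_{n,k}$ automatically a polynomial of the stated degree and symmetry.

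The degree count $\deg p^g_{n,k}=3g-3+n$ should fall out of tracking, through the recursion, the maximal pole order of $\omega^g_n$ at the branch points: each application of the recursion integrates the kernel $K$ (which contributes a fixed pole order) against lower invariants, and the Euler characteristic $2g-2+n$ governs how many times this happens, so the top pole order, hence the polynomial degree, is linear in $3g-3+n$. For the symmetry: $\omega^g_n$ is symmetric in its $n$ arguments, but the two families of exponents $\{2u_i\}$ and $\{2u_i-1\}$ correspond to the two distinct local parities (equivalently, to whether $x_i^{-b_i-2}dx_i$ is even or odd under $z_i\mapsto 1/z_i$ after the substitution $x=z+1/z$), so the induced symmetry of $p^g_{n,k}$ is exactly symmetry separately in the first $k$ and the last $n-k$ variables, as claimed.

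For the top coefficients \eqref{eq:gwcoeff} I would use the standard ``local $\to$ Airy'' principle for Eynard-Orantin invariants: near a simple branch point the recursion for $(C,x,y)$ degenerates, to leading order in the local coordinate, to the recursion of the Airy curve $x=z^2$, $y=z$, whose invariants are the Witten-Kontsevich intersection numbers $\int_{\overline{\modm}_{g,n}}\psi_1^{\beta_1}\cdots\psi_n^{\beta_n}$. Concretely, the most singular part of $\omega^g_n$ at $z=1$ (and at $z=-1$) is governed only by the $2$-jet of $x$ and the $1$-jet of $y$ at that branch point, i.e.\ by $x-2\approx t^2$ and $y=\ln z\approx \log(1+t)\approx t$, which is precisely the Airy data up to a constant rescaling of $t$; the rescaling by the second derivative of $x$ and first derivative of $y$ at $z=\pm1$ produces the overall factor $2^g$ when all $n$ arguments contribute top degree. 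I would make this precise by the rescaled-local-coordinate argument of Eynard, extracting the coefficient of the top monomial $u_1^{\beta_1}\cdots u_n^{\beta_n}$ from the leading singular coefficient and matching it to $2^g\int_{\overline{\modm}_{g,n}}\psi_1^{\beta_1}\cdots\psi_n^{\beta_n}$ via the Kontsevich/Witten theorem, with the two branch points contributing equally by the $z\mapsto 1/z$ symmetry so that no factor of $2$ is lost or doubled.

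The main obstacle I expect is step (iii) together with the top-coefficient identification: bookkeeping the exact relation between ``coefficient of $x_i^{-b_i-2}dx_i$ in the expansion of $\omega^g_n$ at $x_i=\infty$'' and ``coefficient of the principal part of $\omega^g_n$ at $z_i=\pm1$'' requires carefully composing the expansions across the map $x=z+1/z$ (which sends $x=\infty$ to $z=\infty$, not to the branch points), so one must combine the global analyticity at $x_i=\infty$ coming from Theorem~\ref{th:main} with the residue/pole data at the branch points coming from the recursion; reconciling these two descriptions — and in particular showing the combinatorial factorials $1/u_i!^2$ and the single factors $u_{k+1}\cdots u_n$ are exactly what the normalisation $(b_i+1)!$ leaves behind — is the delicate part. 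The polynomiality and degree bound, by contrast, should be a routine induction once the pole-order estimate is set up, and the symmetry is immediate from the symmetry of $\omega^g_n$.
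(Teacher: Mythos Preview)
Your overall strategy---deduce Theorem~\ref{th:GWquasi} from Theorem~\ref{th:main} by transferring structural information about the Eynard-Orantin invariants $\omega^g_n$ to the Gromov-Witten side---is exactly what the paper does. The paper's proof is a two-line corollary once the structural result on the Eynard-Orantin side (Proposition~\ref{Mpoly}) is in place: Theorem~\ref{th:main} identifies $M^g_{n,k}$ with $\prod(b_i+1)!$ times the GW invariant, so the polynomial $m^g_{n,k}$ of Proposition~\ref{Mpoly} equals $p^g_{n,k}$ under $b_i\mapsto 2u_i(+1)$; then plugging $y'(1)=1$, $y'(-1)=-1$ into the top-coefficient formula~\eqref{Mcoeff} and rescaling $b^{\beta}\to (2u)^{\beta}$ yields $c_\beta=2^g\int\psi^\beta$.

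Where you and the paper diverge is in how to get the underlying Eynard-Orantin structural result. You propose to go directly from the pole data of $\omega^g_n$ at the branch points $z=\pm1$ to the $x=\infty$ expansion; you correctly flag this as the main obstacle, since $x=\infty$ corresponds to $z=0,\infty$, not to the branch points. The paper sidesteps this entirely by an intermediate expansion: it expands $\omega^g_n$ around $z=0$ (Lemma~\ref{zexpand}), where the coefficients $N^g_n(b_1,\dots,b_n)$ are quasi-polynomials in $b_i^2$ of degree $3g-3+n$; it then converts this $z$-expansion to an $x$-expansion by a concrete combinatorial transform (Lemmas~\ref{ntom} and~\ref{cortransform}), showing monomials $b_i^{2\alpha_i}$ map to products of central binomials times degree-$\alpha_i$ polynomials $p_{\alpha_i}$, $q_{\alpha_i}$ with leading coefficient $\alpha_i!\,2^{\alpha_i}$. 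This is what produces the factor $\prod b_i\binom{b_i-1}{(b_i-1)/2}\prod\frac{b_i}{2}\binom{b_i}{b_i/2}$---which after dividing by $\prod(b_i+1)!$ becomes exactly your $u_{k+1}\cdots u_n/\prod u_i!^2$---and preserves both the polynomial degree and the top coefficient. Your ``local Airy'' argument for the top coefficients is the same idea as Lemma~\ref{Nintersection}, carried out on the $z$-side rather than the $t_\pm$-side. So your plan is sound but the difficulty you anticipate is real; the paper's resolution is to pivot through $z=0$ rather than attempt the branch-point-to-infinity transfer directly.
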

Again the exceptional cases are $(g,n)=(0,1)$ and $(0,2)$, where we interpret a degree $3g-3+n$ polynomial to mean a rational function given by the reciprocal of a degree 2, respectively degree 1, polynomial.

The asymptotic behaviour of Eynard-Orantin invariants near branch points of $x$ is governed by the local behaviour of the curve $C$ there,  \cite{EOrTop}.  By assumption the local behaviour is described by $x=y^2$ which, as a global curve, has Eynard-Orantin invariants that store tautological intersection numbers over the compactified moduli space of curves $\overline{\modm}_{g,n}$.  This supplies the top coefficients (\ref{eq:gwcoeff}) and enables one to relate the asymptotic behaviour of the Gromov-Witten invariants of $\bp^1$ to tautological intersection numbers over the compactified moduli space of curves $\overline{\modm}_{g,n}$.
\begin{corollary}  \label{th:asym}
For $g=0$ and 1 and $2g-2+n>0$, the stationary Gromov-Witten invariants of $\bp^1$ behave asymptotically as
\begin{equation}\label{eq:asym}
\left\langle \prod_{i=1}^k\tau_{2u_i}(\omega)\prod_{i=k+1}^n\tau_{2u_i-1}(\omega)\right\rangle^g
\sim
\frac{u_{k+1}\cdots u_n}{\prod_{i=1}^n u_i!^2}\sum_{|\beta|=3g-3+n}u_1^{\beta_1}\cdots u_n^{\beta_n}\cdot 2^g\int_{\overline{\modm}_{g,n}}\psi_1^{\beta_1}...\psi_n^{\beta_n}.
\end{equation}
\end{corollary}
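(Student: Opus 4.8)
The plan is to obtain Corollary~\ref{th:asym} directly from Theorem~\ref{th:GWquasi}, of which it is essentially the leading-order shadow: the corollary just says that a polynomial is asymptotic to its top-degree homogeneous part, carried through the explicit combinatorial prefactor. First I would invoke Theorem~\ref{th:GWquasi} to write, for $g=0$ and $1$ and $2g-2+n>0$,
\[
\left\langle \prod_{i=1}^k\tau_{2u_i}(\omega)\prod_{i=k+1}^n\tau_{2u_i-1}(\omega)\right\rangle^g=\frac{u_{k+1}\cdots u_n}{\prod_{i=1}^n u_i!^2}\,p^g_{n,k}(u_1,\dots,u_n),
\]
where $p^g_{n,k}$ has degree $3g-3+n$ and top coefficients $c_\beta=2^g\int_{\overline{\modm}_{g,n}}\psi_1^{\beta_1}\cdots\psi_n^{\beta_n}$ for $|\beta|=3g-3+n$, as in (\ref{eq:gwcoeff}). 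Since the factor $\tfrac{u_{k+1}\cdots u_n}{\prod_{i=1}^n u_i!^2}$ appears identically on both sides of (\ref{eq:asym}), it suffices to prove
\[
p^g_{n,k}(u_1,\dots,u_n)\sim\sum_{|\beta|=3g-3+n}c_\beta\,u_1^{\beta_1}\cdots u_n^{\beta_n}.
\]

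Next I would make the multivariate asymptotic precise. Fix a direction $v=(v_1,\dots,v_n)$ with all $v_i>0$ and let $t\to\infty$; writing $q(v)=\sum_{|\beta|=3g-3+n}c_\beta\,v_1^{\beta_1}\cdots v_n^{\beta_n}$ for the top-degree homogeneous part of $p^g_{n,k}$, we have $p^g_{n,k}(tv)=t^{3g-3+n}q(v)+O(t^{3g-4+n})$, hence $p^g_{n,k}(tv)\big/\big(t^{3g-3+n}q(v)\big)\to 1$. Reinstating the common prefactor yields (\ref{eq:asym}). The only point needing a word is that $q(v)\neq 0$, so that the ratio is defined: every $c_\beta$ is a nonnegative $\psi$-intersection number on $\overline{\modm}_{g,n}$---in genus $0$ one has explicitly $\int_{\overline{\modm}_{0,n}}\psi_1^{\beta_1}\cdots\psi_n^{\beta_n}=\binom{n-3}{\beta_1,\dots,\beta_n}>0$, and the genus-$1$ numbers are likewise positive---so $q(v)>0$ on the positive orthant.

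Finally I would note that the hypothesis $2g-2+n>0$ forces $3g-3+n\geq 0$ for $g=0,1$ (with equality only at $(g,n)=(0,3)$, where $\sim$ is an equality of constants), so the reciprocal-of-a-polynomial reading of Theorem~\ref{th:GWquasi} needed for $(0,1)$ and $(0,2)$ never enters here and $q$ is a genuine nonzero homogeneous polynomial. The real obstacle lies not in this corollary but upstream: all the content sits in Theorem~\ref{th:GWquasi}, in particular in identifying the top coefficients of $p^g_{n,k}$ with intersection numbers on $\overline{\modm}_{g,n}$, which itself rests on Theorem~\ref{th:main} together with the fact---recalled just above the statement of the corollary---that the local behaviour $x=y^2$ of the spectral curve (\ref{eq:lnz}) at its simple branch points reproduces the Eynard-Orantin invariants computing $\psi$-class intersections. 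Granting Theorem~\ref{th:GWquasi}, the passage to (\ref{eq:asym}) is the routine leading-order extraction carried out above.
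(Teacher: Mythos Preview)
Your proposal is correct and matches the paper's own treatment: the paper states Corollary~\ref{th:asym} without a separate proof, treating it as the immediate leading-order consequence of Theorem~\ref{th:GWquasi}. Your write-up simply spells out the routine step the paper leaves implicit, namely that $p^g_{n,k}$ is asymptotic to its top homogeneous part, together with the positivity check ensuring the ratio makes sense.
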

In the exceptional cases $(g,n)=(0,1)$ and $(0,2)$, the asymptotic form is given by the exact formulae in Section~\ref{sec:formulae}.

Section~\ref{sec:EO} defines the Eynard-Orantin invariants and  proves recursions for the Eynard-Orantin invariants of the curve (\ref{eq:lnz}) analogous to recursions satisfied by the Gromov-Witten invariants of $\bp^1$.  The definition of Gromov-Witten invariants is contained in Section~\ref{sec:GW}.  Section~\ref{sec:proofofmain} begins by proving a weaker result than Theorem~\ref{th:GWquasi} which is essentially that $\Omega^g_n$ is analytic and extends to a meromorphic multidifferential on a compact Riemann surface, before proving the main results.  Section~\ref{sec:vir} describes the relationship between the defining recursion relations for the Eynard-Orantin invariants and the Virasoro constraints satisfied by the Gromov-Witten invariants of $\bp^1$.  Numerical checks show that the genus constraint in Theorem~\ref{th:main} and hence also in Theorem~\ref{th:GWquasi} and Corollary~\ref{th:asym} should be unnecessary.  Section~\ref{sec:mat} gives a non-rigorous matrix model proof of Theorem~\ref{th:main} that holds for all genus.  Section~\ref{sec:formulae} contains explicit formulae for Eynard-Orantin invariants and Gromov-Witten invariants of $\bp^1$.

{\em Acknowledgements.} The authors would like to thank Norman Do for useful comments.

\section{Eynard-Orantin invariants.}  \label{sec:EO}

For every $(g,n)\in\bz^2$ with $g\geq 0$ and $n>0$ the Eynard-Orantin invariant of a Torelli marked Riemann surface with meromorphic functions $(C,x,y)$ is  a multidifferential $\omega^g_n(p_1,...,p_n)$, i.e. a tensor product of meromorphic 1-forms on the product $C^n$, where $p_i\in C$.  Recall that a {\em Torelli marking} of $C$ is a choice of symplectic basis $\{a_i, b_i\}_{i=1,..,g}$ of the first homology group $H_1(\bar{C})$ of the compact closure $\bar{C}$ of $C$.  In particular, a genus 0 surface $C$ requires no Torelli marking.  When $2g-2+n>0$, $\omega^g_n(p_1,...,p_n)$ is defined recursively in terms of local  information around the poles of $\omega^{g'}_{n'}(p_1,...,p_n)$ for $2g'+2-n'<2g-2+n$.  Equivalently, the $\omega^{g'}_{n'}(p_1,...,p_n)$ are used as kernels on the Riemann surface.  This is a familiar idea, the main example being the Cauchy kernel which gives the derivative of a function in terms of the bidifferential $dwdz/(w-z)^2$ as follows
\[ f'(z)dz=\res{w=z}\frac{f(w)dwdz}{(w-z)^2}=-\sum_{\alpha}\res{w=\alpha}\frac{f(w)dwdz}{(w-z)^2}\]
where the sum is over all poles $\alpha$ of $f(w)$.  

The Cauchy kernel generalises to a bidifferential $B(w,z)$ on any Riemann surface $C$ which arises from the meromorphic differential $\eta_w(z)dz$ unique up to scale which has a double pole at $w\in C$ and all $A$-periods vanishing.   The scale factor can be chosen so that $\eta_w(z)dz$ varies holomorphically in $w$ and transforms as a 1-form in $w$, hence it is naturally expressed as the unique bidifferential on $C$ 
\[ B(w,z)=\eta_w(z)dwdz,\quad \oint_{A_i}B=0,\quad B(w,z)\sim\frac{dwdz}{(w-z)^2} {\rm\  near\ }w=z.\]  
It is symmetric in $w$ and $z$.  We will call $B(w,z)$ the {\em Bergmann Kernel}, following \cite{EOrInv}.  It is called the fundamental normalised differential of the second kind on $C$ in \cite{FayThe}.  Recall that a meromorphic differential is {\em normalised} if its $A$-periods vanish and it is of the {\em second kind} if its residues vanish.  The Bergmann Kernel is used to express a normalised differential of the second kind in terms of local  information around its poles. 

Since each branch point $\alpha$ of $x$ is simple, for any point $p\in C$ close to $\alpha$ there is a unique point $\hat{p}\neq p$ close to $\alpha$ such that $x(\hat{p})=x(p)$.  The recursive definition of $\omega^g_n(p_1,...,p_n)$ uses only local information around branch points of $x$ and makes use of the well-defined map $p\mapsto\hat{p}$ there. The invariants are defined as follows.
\begin{align*}
\omega^0_1&=-ydx(z)\nonumber\\
% \label{eq:berg}
\omega^0_2&=B(z_1,z_2)
\end{align*}
For $2g-2+n>0$,
\begin{equation}  \label{eq:EOrec}
\omega^g_{n+1}(z_0,z_S)=\sum_{\alpha}\hspace{-2mm}\res{z=\alpha}K(z_0,z)\hspace{-.5mm}\biggr[\omega^{g-1}_{n+2}(z,\hat{z},z_S)+\hspace{-5mm}\displaystyle\sum_{\begin{array}{c}_{g_1+g_2=g}\\_{I\sqcup J=S}\end{array}}\hspace{-5mm}
\omega^{g_1}_{|I|+1}(z,z_I)\omega^{g_2}_{|J|+1}(\hat{z},z_J)\biggr]
\end{equation}
where the sum is over branch points $\alpha$ of $x$, $S=\{1,...,n\}$, $I$ and $J$ are non-empty and 
\[\displaystyle K(z_0,z)=\frac{-\int^z_{\hat{z}}B(z_0,z')}{2(y(z)-y(\hat{z}))dx(z)}\] is well-defined in the vicinity of each branch point of $x$.   Note that the quotient of a differential by the differential $dx(z)$ is a meromorphic function.  The recursion (\ref{eq:EOrec}) depends only on the meromorphic differential $ydx$ and the map $p\mapsto\hat{p}$ around branch points of $x$.  For $2g-2+n>0$, each $\omega^g_n$ is a symmetric multidifferential with poles only at the branch points of $x$, of order $6g-4+2n$, and zero residues.

For $2g-2+n>0$, the invariants satisfy the identity 
\[\sum_{x(z)=x}\omega^g_{n+1}(z_S,z)=0\] 
and the string and dilaton equations \cite{EOrInv}:
\begin{align}
\sum_{\alpha}\res{z=\alpha}y(z)x(z)^m\omega^g_{n+1}(z_S,z)&=-\sum_{i=1}^n\partial_{z_i}\left(\frac{x(z_i)^m\omega^g_n(z_S)}{dx(z_i)}\right),\indent ~~~m=0,1\label{EOstring}\\
\sum_{\alpha}\res{z=\alpha}\Phi(z)\omega^g_{n+1}(z_S,z)&=(2g-2+n)\omega^g_n(z_S)\label{EOdilaton}
\end{align}
where the sum is over the branch points $\alpha$ of $x$, $\Phi(z)=\int^z ydx(z')$ is an arbitrary antiderivative and $z_S=(z_1,\dots,z_n)$.

When $y$ is not a meromorphic function on $C$ and is merely analytic in a domain containing the branch points of $x$, we approximate it by a sequence of meromorphic functions $y^{(N)}$ which agree with $y$ at the branch points of $x$ up to the $N$th derivatives.  The sequence $y^{(N)}$ does not necessarily converge to $y$.  For example, the partial sums $y^{(N)}$ of
\[ y(z)=\ln{z}\sim\sum\frac{(1-z^2)^k}{\hspace{-3mm}-2k}\]
give a divergent asymptotic expansion for $\ln(z)$ at $z=0$ in the region $Re(z^2)>0$.  

The meromorphic functions $y^{(N)}$ can be used in the recursions defining $\omega^g_n$ in place of $y(z)$ since they contain the same local information around $z=\pm 1$ up to order $N$.  More precisely, to define $\omega^g_n$ for $(C,x,y)$ it is sufficient to use $(C,x,y^{(N)})$ for any $N\geq 6g-6+2n$.

\subsection{Polynomial behaviour}
In this section we consider the family of curves:
\begin{equation}  \label{generalC}
\tilde{C}=\begin{cases}x=z+1/z\\ 
y=y(z)
\end{cases}
\end{equation}
for $y(z)$ any analytic function defined on a domain of $\bc$ containing $\pm 1$. 

With respect to the local coordinate $x$ on $C$ each invariant $\omega^g_n$ has an analytic expansion around a branch of $x=\infty$.  Define the coefficients of this expansion 
\begin{equation*}
\omega^g_n=:\sum_{b_1,\dots,b_n=0}^\infty \frac{M^g_{n,k}(b_1,\dots,b_n)}{x_1^{b_1+1}\cdots x_n^{b_n+1}}dx_1\cdots dx_n.
\end{equation*}
for $k$ the number of odd $b_i$.  We may abuse this notation by writing $M^g_n=M^g_{n,k}$ when $k$ is clear.  

In \cite{NScPol} it was shown that Eynard-Orantin invariants of such a curve can be expressed via polynomials:
\begin{lemma}[\cite{NScPol}]  \label{zexpand}
For the curve $x=z+1/z$, $y=y(z)$ and $2g-2+n>0$, $\omega_n^g(z_1,...,z_n)$ has an expansion around $\{z_i=0\}$ given by
\begin{equation}  \label{eq:expand}
\omega_n^g(z_1,..,z_n)=\frac{d}{dz_1}\dots\frac{d}{dz_n}\sum_{b_i>0}N^g_n(b_1,\dots,b_n)z_1^{b_1}\dots z_n^{b_n} dz_1\dots dz_n
\end{equation}
where $N^g_n$ is a symmetric quasi-polynomial in the $b_i^2$ of  degree $3g-3+n$, dependent on the parity of the $b_i$.
\end{lemma}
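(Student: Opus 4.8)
The plan is to set up the Eynard-Orantin recursion (\ref{eq:EOrec}) as a recursion directly on the coefficient functions $N^g_n(b_1,\dots,b_n)$, and show that the operations involved preserve the class of symmetric quasi-polynomials of the stated degree. First I would analyse the geometry of the curve $x=z+1/z$: the branch points of $x$ are the two simple zeros of $dx = (1-1/z^2)dz$, namely $z=\pm 1$, with the deck transformation $z\mapsto\hat z$ near $z=1$ satisfying $\hat z \approx 1/z$ (indeed globally $x(1/z)=x(z)$, so $\hat z = 1/z$ exactly). The key local objects are the Bergmann kernel $B(z_1,z_2)=dz_1dz_2/(z_1-z_2)^2$ on $\PP^1$ and the recursion kernel $K(z_0,z)$; both should be written in the $z$-coordinate and then expanded near $z=\pm 1$. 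Since everything is symmetric under $z\mapsto 1/z$ and the two branch points are interchanged by $z\mapsto -z$, the two residue contributions will combine, and the parity of each $b_i$ enters precisely through this $z\mapsto -z$ symmetry — this is the source of the quasi-polynomial (rather than polynomial) dependence.

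Next I would translate the residue operations into operations on coefficients. The heart of the matter is a ``coefficient extraction'' principle: for a differential whose expansion near a branch point is controlled, $\operatorname{Res}_{z=\alpha} K(z_0,z)\,(\text{stuff})$ produces, in the $z_0$-expansion around $z_0=0$ (equivalently near the other sheet at $\infty$), a coefficient $N^g_{n+1}(b_0,b_S)$ which is obtained from the lower coefficients $N^{g-1}_{n+2}(i,j,b_S)$ and $N^{g_1}(\,\cdot\,)N^{g_2}(\,\cdot\,)$ by (i) a finite linear combination with coefficients that are polynomial in $b_0$, followed by (ii) a summation over the two internal indices $i,j$ with $i+j$ constrained in terms of $b_0$. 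Step (i) preserves quasi-polynomiality and raises degree by a bounded amount; step (ii), a sum of a quasi-polynomial over a simplex $\{i+j \le \text{linear in }b_0\}$, is the classical fact that summing a (quasi-)polynomial over a lattice simplex yields a (quasi-)polynomial of degree one higher. Tracking the Euler characteristic $3g-3+n$ through $\chi \mapsto \chi - 1$ at each recursion step then gives exactly degree $3g-3+n$, once we check the base cases. The relevant base inputs are $\omega^0_3$ and $\omega^1_1$, for which $3g-3+n = 0$, i.e. $N$ is a constant (depending only on parities) — these are computed directly from the definition, together with $\omega^0_2 = B$ which supplies the ``$dz_1 dz_2/(z_1-z_2)^2$'' seed but itself sits at $\chi = -1$ and only enters the recursion as a kernel.

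A few supporting points need care. One must verify that $\omega^g_n$ really is analytic in the $z_i$ near $z_i=0$ with vanishing constant term, so that the expansion (\ref{eq:expand}) with $b_i>0$ makes sense — this follows because $\omega^g_n$ has poles only at $z=\pm 1$, is a global meromorphic multidifferential on $\PP^1$, and one checks its behaviour at $z_i=0$ and $z_i=\infty$. One must also confirm symmetry of $N^g_n$, which is automatic from symmetry of $\omega^g_n$ for $2g-2+n>0$ once the expansion coordinate is symmetric in the $z_i$. Finally, the parity bookkeeping: writing $b_i = 2u_i$ or $2u_i-1$, one shows the result is an honest symmetric polynomial in the $u_i^2$ (equivalently $b_i^2$) within each parity sector; the $z\mapsto -z$ behaviour of $K$ and $B$ near the branch points is what enforces dependence on $b_i^2$ rather than $b_i$.

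The main obstacle I expect is step (ii) made rigorous and uniform: controlling the precise range of the internal summation indices coming from the residue at $z=\pm 1$ (the summation bounds are dictated by the orders of vanishing/poles of $K$ and of the local expansions, which themselves depend on $g$, $n$, and the parities), and checking that the ``sum of a quasi-polynomial over a simplex is a quasi-polynomial of degree one more'' is applied with the correct leading behaviour so that the degree comes out to be \emph{exactly} $3g-3+n$ and not merely at most that. Keeping the kernel expansions organised — separating the universal local contribution (which is the genus-zero, one-branch-point model $x=y^2$, responsible for the top-degree coefficients) from the global corrections — is the bookkeeping challenge; this is presumably where the detailed computation in \cite{NScPol} does its work.
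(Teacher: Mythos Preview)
The paper does not prove this lemma here; it is quoted from \cite{NScPol}, so there is no in-paper proof to compare against. Your inductive strategy through the recursion (\ref{eq:EOrec}) is a plausible route, but your description of how the recursion acts on the coefficients is not right, and in fact the result follows directly from general properties of $\omega^g_n$ without re-running the recursion.

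The residue in (\ref{eq:EOrec}) is taken at $z=\pm 1$, whereas the expansion (\ref{eq:expand}) is at $z_i=0$. So the input to each step of the recursion is the \emph{principal part} of $\omega^{g'}_{n'}$ at the branch points, not the Taylor coefficients $N^{g'}_{n'}$; there is no convolution over internal indices with ``$i+j$ constrained in terms of $b_0$'', and the simplex-summation/degree-raising picture does not apply here. The direct argument uses only two general facts about $\omega^g_n$ for $2g-2+n>0$: it is rational with poles only at $z_i=\pm 1$ of order $6g-4+2n$, and it is odd under the deck involution $z_i\mapsto 1/z_i$ in each variable (as used elsewhere in the paper). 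Oddness forces, variable by variable, the function $z_i\cdot(\omega^g_n/dz_i)$ to be \emph{invariant} under $z_i\mapsto 1/z_i$, hence a rational function of $x_i=z_i+1/z_i$ with poles only at $x_i=\pm 2$ of order $3g-2+n$. Since $x\mp 2=(z\mp 1)^2/z$, the coefficient of $z^{b}$ in $(x\mp 2)^{-j}$ is
\[
(\pm 1)^{b-j}\binom{b+j-1}{2j-1}=(\pm 1)^{b-j}\,\frac{b}{(2j-1)!}\prod_{i=1}^{j-1}(b^2-i^2),
\]
i.e.\ $b$ times a polynomial of degree $j-1$ in $b^2$. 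Partial fractions in $x_i$ then exhibit $N^g_n$ as a quasi-polynomial in the $b_i^2$, with the parity dependence coming from the relative sign $(-1)^{b_i}$ between the two poles $x_i=\pm 2$, and the degree $3g-3+n$ read off \emph{exactly} from the pole order --- not obtained as ``at most'' via a summation bound.

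Two smaller corrections. The map $z\mapsto -z$ is not a symmetry of the curve for general $y$ (it sends $x\mapsto -x$), so it cannot be the source of the $b_i^2$-dependence, which holds for arbitrary $y$; the relevant involution throughout is $z\mapsto 1/z$, which you identify but then do not use for this purpose. And ``polynomial in $u_i^2$'' is not equivalent to ``polynomial in $b_i^2$'': under $b_i=2u_i$ or $2u_i-1$, a polynomial in $b_i^2$ becomes a polynomial in $u_i$, not $u_i^2$, within each parity sector.
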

Recall that a function on $\bz^n$ is quasi-polynomial if it is polynomial on each coset of a sublattice $\Gamma \subset \bz^n$ and it is symmetric if it is invariant under the permutation group $S_n$.  In particular, each polynomial is invariant under permutations that preserve the corresponding coset.  The function $N^g_n$ is polynomial on each coset of $2\bz^n \subset \bz^n$.  By symmetry, we can represent its $2^n$ polynomials by the $n$ polynomials $N^g_{n,k}(b_1,...,b_n)$, for $k=1,...,n$, symmetric in $b_1,...,b_k$ and $b_{k+1},...,b_n$ corresponding to the first $k$ variables being odd.
\begin{lemma} [\cite{NScPol}] \label{Nintersection}
The coefficients of the top homogeneous degree terms in the polynomial $N^g_{n,k}(b_1,...,b_n)$, defined above, can be expressed in terms of intersection numbers of $\psi$ classes on $\overline{\modm}_{g,n}$.  For $\sum_i\beta_i=3g-3+n$, the coefficient $v_{\beta}$ of $\prod b_i^{2\beta_i}$ is
\[v_{\beta}=\frac{y'(1)^{2-2g-n}+(-1)^ky'(-1)^{2-2g-n}}{2^{5g-5+2n}\beta_1!...\beta_n!}\int_{\overline{\modm}_{g,n}}\psi_1^{\beta_1}...\psi_n^{\beta_n}.\]
\end{lemma}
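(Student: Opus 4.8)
The plan is to exploit the fact that the recursion (\ref{eq:EOrec}) manufactures $\omega^g_n$ out of purely local data at the two branch points $z=\pm1$ of $x=z+1/z$, so that the top homogeneous part of the quasi-polynomial $N^g_{n,k}$ splits as a sum of one contribution per branch point, each being a rescaled copy of the Airy-curve Eynard--Orantin invariants; and those, by \cite{EynRec} (the Witten--Kontsevich theorem), are $\sum_{\mathbf d}\langle\prod_i\tau_{d_i}\rangle_g\prod_i(2d_i+1)!!\,u_i^{-2d_i-2}du_i$ with $\langle\prod_i\tau_{d_i}\rangle_g=\int_{\overline{\modm}_{g,n}}\psi_1^{d_1}\cdots\psi_n^{d_n}$, summed over $\sum_id_i=3g-3+n=\dim\overline{\modm}_{g,n}$. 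First I would write down the local models: with $w=z-1$ near $z=1$, $x=2+w^2+O(w^3)$ and $y=y(1)+y'(1)w+O(w^2)$, and the involution (which is globally $z\mapsto1/z$) is $w\mapsto-w+O(w^2)$; near $z=-1$ the same with $w=z+1$, except $x=-2-w^2+O(w^3)$. The additive constants $y(\pm1)$ shift only $\omega^0_1$ by an exact differential, hence do not affect $\omega^g_n$ for $2g-2+n>0$; so to leading order the local curve at $z=1$ is the Airy curve $x=w^2$, $y=y'(1)w$, and at $z=-1$ it is $x=-w^2$, $y=y'(-1)w$. The principle that the asymptotics of $\omega^g_n$ near a branch point are governed by this local curve is \cite{EOrTop}.

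Next I would extract the dependence on $y'(\pm1)$ via homogeneity: a short induction on $2g-2+n$ through (\ref{eq:EOrec}) — using that $\omega^0_1=-y\,dx$ scales linearly under $y\mapsto\lambda y$ and under $x\mapsto\mu x$, that $\omega^0_2=B$ is unchanged, and that $K$ scales inversely — shows $\omega^g_n$ is homogeneous of degree $2-2g-n$ in the differential $y\,dx$. Near $z=1$ one has $y\,dx=2y'(1)\,w^2\,dw+(\text{exact})+\dots$ (the extra $2$ because $dx\sim 2w\,dw$ there), so $y\,dx$ is $2y'(1)$ times the Airy form $w^2\,dw$; near $z=-1$, $y\,dx=-2y'(-1)\,w^2\,dw+(\text{exact})+\dots$. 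Hence the branch point $z=1$ contributes $(2y'(1))^{\,2-2g-n}$, and $z=-1$ contributes $(-2y'(-1))^{\,2-2g-n}$, times the respective Airy invariant expressed in the local coordinate ($w_i=z_i-1$ near $z=1$, $w_i=z_i+1$ near $z=-1$).

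The third step passes from these principal parts at $z=\pm1$ to the Taylor coefficients $N^g_n$ at $z=0$. Since $\omega^g_n$ is holomorphic near $z=0$ with singularities only at the two branch points, both at distance $1$ from the origin, the polynomial growth of $N^g_n$ in the $b_i$ is controlled by these principal parts; expanding $(z_i-1)^{-2d_i-2}$, respectively $(z_i+1)^{-2d_i-2}$, at $z_i=0$ and keeping the leading power of $b_i$ — and stripping the $d/dz_i$ of (\ref{eq:expand}) — contributes $b_i^{2d_i}/(2d_i+1)!$ at $z=1$ and $(-1)^{b_i}b_i^{2d_i}/(2d_i+1)!$ at $z=-1$, the alternating sign at $z=-1$ being exactly what makes $N^g_n$ genuinely quasi-polynomial. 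Taking $d_i=\beta_i$, so that $\sum\beta_i=3g-3+n$ matches both the degree of the top part of $N^g_{n,k}$ and $\dim\overline{\modm}_{g,n}$, and combining the identity $(2d_i+1)!!/(2d_i+1)!=1/(2^{d_i}d_i!)$ with the branch-point scales $(2y'(\pm1))^{2-2g-n}$: the powers of $2$ bundle into $\beta_1!\cdots\beta_n!\,2^{5g-5+2n}$ in the denominator, and — after a careful accounting of the signs produced by $x\sim-w^2$ at $z=-1$ and by the alternation $(-1)^{b_i}$ — the factor $(-1)^k$ (with $k$ the number of odd $b_i$) remains on the $z=-1$ summand, giving the stated $v_\beta$.

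The hardest step is justifying that \emph{only} this leading local data feeds into the \emph{top} homogeneous part of $N^g_{n,k}$: one must show that the higher Taylor coefficients of $y$ at $z=\pm1$, the subleading corrections to the involution, and the ``mixed'' residue contributions in (\ref{eq:EOrec}) that send different variables to different branch points all strictly lower the degree in the $b_i$. The cleanest organisation is to prove that the top-degree truncation of the recursion for $N^g_n$ coming from (\ref{eq:EOrec}) is precisely two decoupled copies of the Dijkgraaf--Verlinde--Verlinde (Virasoro) recursion for $\psi$-intersection numbers — one weighted by $y'(1)^{2-2g-n}$, one by $(-1)^ky'(-1)^{2-2g-n}$ — so that Witten--Kontsevich simultaneously identifies the top coefficients with $\int_{\overline{\modm}_{g,n}}\psi_1^{\beta_1}\cdots\psi_n^{\beta_n}$ and fixes all the combinatorial constants. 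A secondary, purely bookkeeping difficulty — but the one where errors are easiest — is pinning down the overall sign and the exact power of $2$; routing everything through the DVV recursion is far safer than manipulating the local expansions by hand.
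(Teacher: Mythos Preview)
The paper does not supply its own proof of this lemma; it is quoted from \cite{NScPol}. Your strategy --- reduce to the local Airy model at each branch point via the locality of (\ref{eq:EOrec}), extract the $y'(\pm1)^{2-2g-n}$ scaling from the homogeneity of $\omega^g_n$ in $y\,dx$, identify the Airy invariants with $\psi$-intersection numbers through Witten--Kontsevich/DVV, and track the alternating sign at $z=-1$ as the source of the $(-1)^k$ --- is precisely the argument of \cite{NScPol} and the mechanism the present paper alludes to just after Theorem~\ref{th:GWquasi} (citing \cite{EOrTop}) when it says the asymptotic behaviour of $\omega^g_n$ near a branch point is governed by the local curve $x=y^2$. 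Your outline is correct, including your identification of the two genuine tasks: showing that the subleading Taylor data at $z=\pm1$ and the cross-branch-point terms only feed lower-degree parts of $N^g_{n,k}$, and the sign/power-of-$2$ bookkeeping.
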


In particular, the proofs are constructive, showing how to calculate such polynomials from $\omega_n^g$'s and lead to explicit formulae for the $M^g_n$'s via the following lemma.  It is important to point out that $z=0$ and $z=\infty$ correspond to the two branches at $x=\infty$.  The expansion in $z$ is around $z=0$ while the expansion in $x$ is around the other branch $z=\infty$.  This is essentially due to the need for both expansions to have positive coefficients.
\begin{lemma}\label{ntom} For the curve $x=z+1/z$, $y=y(z)$
\begin{equation}\label{ntomformula}
M^g_n(b_1,\dots,b_n)=\sum_{l_i> \frac{b_i}{2}}^{b_i}N^g_n(2l_1-b_1,...,2l_n-b_n)\prod_{i=1}^n (2l_i-b_i)\binom{b_i}{l_i}
%&=(-1)^n\prod_{i=1}^n D_iB_{\infty,b_i}(D_i)N^g_n(0,\dots,0)
\end{equation}
\end{lemma}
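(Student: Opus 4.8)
The plan is to recognise both $M^g_n$ and $N^g_n$ as coefficient extractions of the \emph{same} multidifferential $\omega^g_n$, taken near the two different sheets over $x_i=\infty$, and to pass between them by a contour deformation. Since the $x$-expansion defining $M^g_n$ is by hypothesis taken around the sheet $\{z_i=\infty\}$, a large circle in the $x_i$-plane pulls back there to a positively oriented circle $|z_i|=R$ enclosing $z_i=0,1,-1$, so that
\[ M^g_n(b_1,\dots,b_n)=\frac{1}{(2\pi i)^n}\oint\cdots\oint\prod_{i=1}^n x(z_i)^{b_i}\;\omega^g_n(z_1,\dots,z_n), \]
with each loop the circle $|z_i|=R$. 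I would insert $x(z_i)^{b_i}=(z_i+z_i^{-1})^{b_i}=\sum_{l_i=0}^{b_i}\binom{b_i}{l_i}z_i^{b_i-2l_i}$ and then shrink each loop to a small circle $|z_i|=\epsilon$ about $z_i=0$. Writing $\omega^g_n=d_{z_1}\cdots d_{z_n}\sum_{c_i>0}N^g_n(c_1,\dots,c_n)\,z_1^{c_1}\cdots z_n^{c_n}$ as in Lemma~\ref{zexpand}, contour integration in $z_i$ against $z_i^{b_i-2l_i}$ around $|z_i|=\epsilon$ forces $c_i=2l_i-b_i$ and brings down the accompanying factor $c_i=2l_i-b_i$ from $d_{z_i}(z_i^{c_i})=c_i\,z_i^{c_i-1}dz_i$; carrying this out in all $n$ variables produces $\prod_i(2l_i-b_i)\,N^g_n(2l_1-b_1,\dots,2l_n-b_n)$, which is nonzero only when every $2l_i-b_i\ge1$, i.e.\ every $l_i>b_i/2$. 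This is exactly the right-hand side of \eqref{ntomformula}.

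The content of the argument is that shrinking the loops introduces no correction. For the $i$-th variable the integrand $\prod_j z_j^{b_j-2l_j}\,\omega^g_n$, as a form in $z_i$, has poles only at the branch points $z_i=\pm1$ of $x$, since $\omega^g_n$ does and $z_i^{b_i-2l_i}$ has no poles in the annulus $\epsilon<|z_i|<R$. Summing the crossing residues over $l_i$ and using $\sum_{l_i}\binom{b_i}{l_i}z_i^{b_i-2l_i}=x(z_i)^{b_i}$ once more, the entire correction in the $i$-th variable collapses to $\big({\rm Res}_{z_i=1}+{\rm Res}_{z_i=-1}\big)\big(x(z_i)^{b_i}\,\omega^g_n\big)$. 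I would kill this with the identity $\sum_{x(z)=x}\omega^g_n=0$ of Section~\ref{sec:EO}, which is available because $2g-2+n>0$: the deck transformation $p\mapsto\hat p$ of $x=z+1/z$ is the global involution $\sigma\colon z\mapsto1/z$, whose fixed points are exactly $z=\pm1$, and the identity says precisely that $\sigma^\ast\omega^g_n=-\omega^g_n$ in each variable. Since $x$ is $\sigma$-invariant, $x(z_i)^{b_i}\,\omega^g_n$ is $\sigma$-anti-invariant; and because the residue of a meromorphic $1$-form at a point is a coordinate invariant, the residue of a $\sigma$-anti-invariant form at a fixed point of $\sigma$ equals its own negative, hence vanishes. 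Thus ${\rm Res}_{z_i=\pm1}\big(x(z_i)^{b_i}\,\omega^g_n\big)=0$, the correction disappears, and one runs the argument one variable at a time — the anti-invariance in $z_i$ holds wherever the other variables sit — to obtain the lemma.

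The step I expect to demand the most care is this last one: confirming that the $x_i^{b_i}$-weighted residues at the branch points really cancel — equivalently, that the fibre-sum identity is applied in its correct range and form — together with the bookkeeping of orientations and of which sheet over $x_i=\infty$ is used to define $M^g_n$. (Throughout one assumes $2g-2+n>0$; the exceptional pairs $(g,n)=(0,1),(0,2)$ are excluded, since there $N^g_n$ is not defined by Lemma~\ref{zexpand} and the identity $\sum_{x(z)=x}\omega^g_n=0$ already fails for $\omega^0_2$, and those cases are handled by the explicit formulae of Section~\ref{sec:formulae}.)
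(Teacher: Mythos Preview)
Your proposal is correct and uses the same key ingredient as the paper --- the anti-invariance $\omega^g_n(1/z_1,\dots,1/z_n)=(-1)^n\omega^g_n(z_1,\dots,z_n)$ --- but you deploy it differently. The paper applies the substitution $z\mapsto 1/z$ in one stroke to convert $\res{z_i=\infty}$ directly into $\res{z_i=0}$ (since $x$ is $\sigma$-invariant and $\omega^g_n$ is $\sigma$-anti-invariant, the integrand at $\infty$ and at $0$ match up to the sign $(-1)^n$), never passing near the branch points at all. You instead deform a large contour in each $z_i$ down to a small one, and then use the anti-invariance a second way --- to argue that the residues picked up at the fixed points $z_i=\pm 1$ vanish. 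Both routes are valid; the paper's is shorter because the change of variables $z\leftrightarrow 1/z$ lands you at $z=0$ without ever having to discuss the poles at $\pm 1$, while yours makes the role of the branch points explicit and shows directly why they contribute nothing.
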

\begin{proof} Extract the coefficients of a local expansion of $\omega^g_n$ in $x_i^{-1}$ by taking residues.
\begin{align*} 
M^g_n(b_1,...,b_n)&:=(-1)^n\res{x_1=\infty}...\res{x_n=\infty} x_1^{b_1}... x_n^{b_n}\cdot\omega^g_n(z_1,...,z_n)\\
&=(-1)^n\res{z_1=\infty}... \res{z_n=\infty} x_1^{b_1}... x_n^{b_n}\cdot\omega^g_n(z_1,...,z_n)\\
&=\res{z_1=0}... \res{z_n=0} x_1^{b_1}... x_n^{b_n}\cdot\omega^g_n(z_1,...,z_n)\quad(\Leftarrow\omega^g_n(1/z_1,...,1/z_n)=(-1)^n\omega^g_n(z_1,...,z_n)\ )\\
&=\prod_{i=1}^n \res{z_i=0} \left(\frac{1}{z_i}+z_i\right)^{b_i}\sum_{k_1,...,k_n=1}^\infty N^g_n(k_1,...,k_n)\prod_{i=1}^nk_iz_i^{k_i-1}dz_i\\
&=\prod_{i=1}^n \res{z_i=0}\sum_{l_1,...,l_n=0}^{b_i}\sum_{k_1,...,k_n=1}^\infty N^g_n(k_1,...,k_n)\prod_{i=1}^nk_i\binom{b_i}{l_i}z_i^{b_i-2l_i+k_i-1}dz_i\\
&= \sum_{l_i> \frac{b_i}{2}}^{b_i}N^g_n(2l_1-b_1,...,2l_n-b_n)\prod_{i=1}^n (2l_i-b_i)\binom{b_i}{l_i}.
%&=(-1)^n\prod_{i=1}^n D_iB_{\infty,b_i}(D_i)N^g_n(0,\dots,0)\\
%&=\prod_{i=1}^n\frac{\partial}{\partial t_{\infty,b_i}} F^g
%&=\sum_{m_1,...,m_n=1}^{l_i} N^g_n(2m_1,...,2m_p,2m_{p+1}+1,...,2m_n+1)\prod_{i=1}^p 2m_i \binom{2l_i}{m_i+l_i}\prod_{i=p+1}^n (2m_i+1) \binom{2l_i+1}{m_i+l_i+1}\\
\end{align*}
\end{proof}
Analogous to the notation $N^g_{n,k}(b_1,\dots,b_n)$ which is the polynomial expression for $N^g_n$ corresponding to the first $k$ variables being odd, since the sum (\ref{ntomformula}) respects parity, we define $M^g_{n,k}(b_1,\dots,b_n)$ to be the expression for $M^g_n$ with $k$ odd variables, obtained by summing $N^g_{n,k}$ terms.
\begin{lemma}\label{cortransform} $M^g_{n,k}(b_1,\dots,b_n)$ can be obtained from $N^g_{n,k}(b_1,\dots,b_n)$ via the term-by-term transform on monomials 
\begin{equation}\label{transform}
b_1^{2\alpha_1}\cdots b_n^{2\alpha_n}\mapsto \prod_{i=1}^kb_i\binom{b_i-1}{\frac{b_i-1}{2}}q_{\alpha_i}\left(\frac{b_i-1}{2}\right)\prod_{i=k+1}^n \frac{b_i}{2}\binom{b_i}{\frac{b_i}{2}}p_{\alpha_i}\left(\frac{b_i}{2}\right)
\end{equation}
where $q_\alpha(n)$ and $p_\alpha(n)$ are polynomials of degree $\alpha$ satisfying the recurrences 
\begin{align}
p_{\alpha+1}(n)&=4n^2(p_\alpha(n)-p_\alpha(n-1))+4np_\alpha(n-1)\label{prec},& p_0(n)&=1\\
 q_{\alpha+1}(n)&=4n^2(q_\alpha(n)-q_\alpha(n-1))+(4n+1)q_\alpha(n-1)\label{qrec},& q_0(n)&=1
\end{align}
\end{lemma}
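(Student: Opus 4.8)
The transform described by Lemma~\ref{ntom} is a product over the $n$ variables and preserves the parity of each $b_i$, so the plan is to reduce the whole statement to a single-variable computation: find the image of one monomial $b^{2\alpha}$, treating $b$ even and $b$ odd separately. Reading off the coefficient of $b^{2\alpha}$ in \eqref{ntomformula} and putting $l=\tfrac b2+m$ when $b=2n$ is even (so $2l-b=2m$), the one-variable transform sends $b^{2\alpha}$ to
\[ S_\alpha(n):=\sum_{m=1}^{n}(2m)^{2\alpha+1}\binom{2n}{n+m};\]
putting $l=\tfrac{b-1}2+m$ when $b=2n+1$ is odd (so $2l-b=2m-1$) it sends $b^{2\alpha}$ to
\[ T_\alpha(n):=\sum_{m=1}^{n+1}(2m-1)^{2\alpha+1}\binom{2n+1}{n+m}.\]
Thus it is enough to prove $S_\alpha(n)=n\binom{2n}{n}p_\alpha(n)$ and $T_\alpha(n)=(2n+1)\binom{2n}{n}q_\alpha(n)$ with $p_\alpha,q_\alpha$ polynomials of degree $\alpha$ obeying \eqref{prec}--\eqref{qrec}; assembling these evaluations over the $n$ slots and summing over the monomials of $N^g_{n,k}$ then produces exactly \eqref{transform}.

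I would prove the two evaluations by induction on $\alpha$. The base case $\alpha=0$ amounts to the classical identities $\sum_{m=1}^{n}2m\binom{2n}{n+m}=n\binom{2n}{n}$ and $\sum_{m=1}^{n+1}(2m-1)\binom{2n+1}{n+m}=(2n+1)\binom{2n}{n}$, which drop out by writing $2m=(n+m)-(n-m)$, respectively $2m-1=(n+m)-(n+1-m)$, applying $k\binom{N}{k}=N\binom{N-1}{k-1}$, and using the symmetry of the binomial row so that the resulting partial sums telescope. For the inductive step the essential input is the contiguity relation for central binomial coefficients,
\begin{align*}
(n^2-m^2)\binom{2n}{n+m}&=2n(2n-1)\binom{2n-2}{(n-1)+m},\\
\tfrac14\big((2n+1)^2-(2m-1)^2\big)\binom{2n+1}{n+m}&=2n(2n+1)\binom{2n-1}{(n-1)+m},
\end{align*}
which rewrites $(2m)^2\binom{2n}{n+m}$ as an $m$-independent linear combination of $\binom{2n}{n+m}$ and $\binom{2n-2}{(n-1)+m}$, and likewise $(2m-1)^2\binom{2n+1}{n+m}$ in terms of $\binom{2n+1}{n+m}$ and $\binom{2n-1}{(n-1)+m}$. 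Multiplying by $(2m)^{2\alpha+1}$, resp.\ $(2m-1)^{2\alpha+1}$, and summing --- and noting that the term carrying the lowered binomial has its out-of-range index annihilated by a vanishing binomial coefficient, so that it is exactly $S_\alpha(n-1)$, resp.\ $T_\alpha(n-1)$ --- gives the two-term recursions
\begin{align*}
S_{\alpha+1}(n)&=4n^2S_\alpha(n)-8n(2n-1)S_\alpha(n-1),\\
T_{\alpha+1}(n)&=(2n+1)^2T_\alpha(n)-8n(2n+1)T_\alpha(n-1).
\end{align*}
Substituting $S_\alpha=n\binom{2n}{n}p_\alpha$ and $T_\alpha=(2n+1)\binom{2n}{n}q_\alpha$ and using $\binom{2n-2}{n-1}=\tfrac{n}{2(2n-1)}\binom{2n}{n}$, the binomial factors cancel and one is left with a recurrence for $p_\alpha$, resp.\ $q_\alpha$, of precisely the shape of \eqref{prec}, resp.\ \eqref{qrec}; a short additional induction then confirms $p_\alpha,q_\alpha$ are polynomials of degree $\alpha$.

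The step I expect to be the main obstacle is the inductive passage from $\alpha$ to $\alpha+1$: one has to use exactly the contiguity relation above so that $S_{\alpha+1}$, resp.\ $T_{\alpha+1}$, is expressed through $S_\alpha$, resp.\ $T_\alpha$, at only the two arguments $n$ and $n-1$ --- not at $n-2,n-3,\dots$ --- and so that the residual binomial coefficient is precisely the one that cancels cleanly against $n\binom{2n}{n}$, resp.\ $(2n+1)\binom{2n}{n}$. Once that relation is in place, matching the resulting recursion with \eqref{prec}--\eqref{qrec} is a brief algebraic check and the base case is standard; the only other point requiring care is the bookkeeping of summation limits near $m=0$ and near $m=n$, resp.\ $m=n+1$, when the binomial is shifted, where the boundary terms either vanish identically (odd powers of $0$) or fall outside the support of the binomial.
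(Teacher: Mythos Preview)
Your proposal is correct and follows essentially the same route as the paper: reduce to a single variable by factorising the sum in Lemma~\ref{ntom}, split by parity, establish the three-term recursions $S_{\alpha+1}(n)=4n^2S_\alpha(n)-8n(2n-1)S_\alpha(n-1)$ and $T_{\alpha+1}(n)=(2n+1)^2T_\alpha(n)-8n(2n+1)T_\alpha(n-1)$, and then substitute $S_\alpha=n\binom{2n}{n}p_\alpha$, $T_\alpha=(2n+1)\binom{2n}{n}q_\alpha$. The only difference is that the paper quotes these three-term recursions from \cite{Tue}, whereas you supply a direct proof via the contiguity identity $(n^2-m^2)\binom{2n}{n+m}=2n(2n-1)\binom{2n-2}{n-1+m}$ and its odd analogue; this makes your argument self-contained but is otherwise the same computation. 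One small caution for the ``brief algebraic check'': after substituting $T_\alpha=(2n+1)\binom{2n}{n}q_\alpha$ you will obtain $q_{\alpha+1}(n)=(2n+1)^2q_\alpha(n)-4n^2q_\alpha(n-1)$, i.e.\ $4n^2(q_\alpha(n)-q_\alpha(n-1))+(4n+1)q_\alpha(n)$, so be prepared to reconcile this with the form printed in \eqref{qrec} (the tabulated $q_\alpha$ values in the paper are consistent with the former).
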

\begin{proof} As the sum (\ref{ntomformula}) is over all combinations of $l_i$ for each $i$, for monomial terms of several variables we can factorise
\begin{equation}
\sum_{l_i> \frac{b_i}{2}}^{b_i}\prod_{i=1}^n (2l_i-b_i)^{2\alpha_i+1}\binom{b_i}{l_i}=\prod_{i=1}^n \sum_{l_i> \frac{b_i}{2}}^{l_i}(2l_i-b_i)^{2\alpha_i+1}\binom{b_i}{l_i}
\end{equation}
to reduce the problem to the one variable case. For different parities $b=2n$ and $b=2n+1$, the sums become
\begin{equation}
\sum_{l> \frac{b}{2}}^{b}(2l-b)^{2\alpha+1}\binom{b}{l}= \begin{cases} \sum_{l=0}^n\binom{2n}{n-l}(2l)^{2\alpha+1}, &b=2n\\
\sum_{l=0}^n \binom{2n+1}{n-l}(2l+1)^{2\alpha+1},  & b=2n+1
\end{cases}
\end{equation}
after exchanging $l\mapsto n-l$.  From \cite{Tue}, the sum $$ \tilde{p}_\alpha(n):=\sum_{l=0}^n\binom{2n}{n-l}(2l)^{2\alpha+1} $$ satisfies the three term recurrence $$\tilde{p}_{\alpha+1}(n)=4n^2\tilde{p}_\alpha(n)-8n(2n-1)\tilde{p}_\alpha(n-1), \indent \tilde{p}_0(n)\binom{2n}{n}.$$  Letting $\tilde{p}_\alpha(n)=p_\alpha(n)n\binom{2n}{n}$ gives the required recursion (\ref{prec}) for $p_\alpha$.  The proof for the odd case proceeds in the same manner, this time starting from the three term recursion $$\tilde{q}_{\alpha+1}(n)=(2n+1)^2\tilde{q}_\alpha(n)-8n(2n+1)\tilde{q}_\alpha(n-1), \indent \tilde{q}_0(n)=(2n+1)\binom{2n}{n}.$$
\end{proof}
The first few transformation polynomials (in the form useful for (\ref{transform})) are 
\begin{align*}
p_0(\frac{b}{2})&=1& q_0(\frac{b-1}{2})&=1\\
p_1(\frac{b}{2})&=2b & q_1(\frac{b-1}{2})&=2b-1\\
p_2(\frac{b}{2})&=8b(b-1)& q_2(\frac{b-1}{2})&=8b^2-12b+5\\
p_3(\frac{b}{2})&=16b(3b^2-8b+6)& q_3(\frac{b-1}{2})&=48b^3-152b^2+166b-61.\\
\end{align*}
The $p_\alpha$ and $q_\alpha$ are generalisations of the Gandhi polynomials, related to the Dumont-Foata polynomials.  See \cite{Tue} and the references therein for a survey and properties of these topics.

\begin{proposition}\label{Mpoly}For $2g-2+n>0$, the coefficients $M^g_{n,k}$ in the expansion of the Eynard-Orantin invariants of (\ref{generalC}) about $x=\infty$ can be expressed as
\begin{equation}  \label{Mprod}
M^g_{n,k}=\prod_{i=1}^k b_i\binom{b_i-1}{\frac{b_i-1}{2}}\prod_{i={k+1}}^n\frac{b_i}{2}\binom{b_i}{\frac{b_i}{2}}m^g_{n,k}(b_1,\dots,b_n),
\end{equation} where $m^g_{n,k}(b_1,\dots,b_n)$ is a polynomial of degree $3g-3+n$, symmetric in variables of the same parity, with coefficient $v_{\beta}$ of $b_1^{\beta_1}\cdots b_n^{\beta_n}$ given by 
\begin{equation}  \label{Mcoeff}
v_{\beta}=\frac{y'(1)^{2-2g-n}+(-1)^{k}y'(-1)^{2-2g-n}}{2^{2g-2+n}}\int_{\overline{\modm}_{g,n}}\psi_1^{\beta_1}...\psi_n^{\beta_n}
\end{equation}
for $|\beta|=3g-3+n$.
\end{proposition}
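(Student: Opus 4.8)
The plan is to assemble Lemmas~\ref{zexpand}, \ref{Nintersection} and \ref{cortransform}, the only genuinely new ingredient being a short induction that computes the leading coefficients of the transform polynomials $p_\alpha$ and $q_\alpha$. First I would recall from Lemma~\ref{zexpand} that $N^g_{n,k}(b_1,\dots,b_n)$ is a polynomial in $b_1^2,\dots,b_n^2$ of degree $3g-3+n$, symmetric in $b_1,\dots,b_k$ and in $b_{k+1},\dots,b_n$. By Lemma~\ref{cortransform}, $M^g_{n,k}$ is obtained from $N^g_{n,k}$ by applying the term-by-term transform (\ref{transform}) to each monomial. The factor $\prod_{i=1}^k b_i\binom{b_i-1}{(b_i-1)/2}\prod_{i=k+1}^n\tfrac{b_i}{2}\binom{b_i}{b_i/2}$ appears in the image of every monomial, hence factors out of the whole sum, leaving
\[
M^g_{n,k}=\prod_{i=1}^k b_i\binom{b_i-1}{\frac{b_i-1}{2}}\prod_{i=k+1}^n\frac{b_i}{2}\binom{b_i}{\frac{b_i}{2}}\cdot m^g_{n,k}(b_1,\dots,b_n),
\]
where $m^g_{n,k}$ is the result of the term-by-term substitutions $b_i^{2\alpha}\mapsto q_\alpha((b_i-1)/2)$ for $i\le k$ and $b_i^{2\alpha}\mapsto p_\alpha(b_i/2)$ for $i>k$ applied to $N^g_{n,k}$. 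Since $p_\alpha$ and $q_\alpha$ have degree $\alpha$, this $m^g_{n,k}$ is indeed a polynomial of degree $3g-3+n$, and it inherits the symmetry of $N^g_{n,k}$ in variables of the same parity because all the first $k$ variables (resp.\ all the last $n-k$) are treated identically by the substitution.

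Next I would pin down the top coefficients of $p_\alpha$ and $q_\alpha$. Using the recurrences (\ref{prec}) and (\ref{qrec}), an induction on $\alpha$ shows that both $p_\alpha(n)$ and $q_\alpha(n)$ have leading term $4^\alpha\alpha!\,n^\alpha$: if the leading term of $p_\alpha$ is $c\,n^\alpha$, then $4n^2\bigl(p_\alpha(n)-p_\alpha(n-1)\bigr)$ contributes $4\alpha c\,n^{\alpha+1}$ and $4n\,p_\alpha(n-1)$ contributes $4c\,n^{\alpha+1}$, so $p_{\alpha+1}$ has leading term $4(\alpha+1)c\,n^{\alpha+1}$; the same computation works verbatim for $q_\alpha$ since $(4n+1)q_\alpha(n-1)$ and $4n\,q_\alpha(n-1)$ have identical leading parts, and $p_0=q_0=1$. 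Consequently, as polynomials in $b$, both $q_\alpha\!\left(\tfrac{b-1}{2}\right)$ and $p_\alpha\!\left(\tfrac{b}{2}\right)$ have leading term $2^\alpha\alpha!\,b^\alpha$ (one checks this is consistent with the displayed table of $p_\alpha,q_\alpha$).

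It then follows that the degree $3g-3+n$ part of $m^g_{n,k}$ comes only from the degree $3g-3+n$ part of $N^g_{n,k}$: the monomial $\prod_i b_i^{2\beta_i}$ with $|\beta|=3g-3+n$ maps to $\prod_i 2^{\beta_i}\beta_i!\,b_i^{\beta_i}$ plus lower-order terms. Multiplying the coefficient of $\prod_i b_i^{2\beta_i}$ in $N^g_{n,k}$, given by Lemma~\ref{Nintersection}, by $\prod_i 2^{\beta_i}\beta_i!=2^{3g-3+n}\prod_i\beta_i!$ cancels the $\beta_i!$ in the denominator and lowers the power of $2$ from $2^{5g-5+2n}$ to $2^{5g-5+2n-(3g-3+n)}=2^{2g-2+n}$, which is exactly the coefficient (\ref{Mcoeff}). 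The only real obstacle here is the bookkeeping of the powers of $2$ and the factorials; everything else is a direct quotation of the cited lemmas together with the elementary leading-coefficient induction for $p_\alpha$ and $q_\alpha$.
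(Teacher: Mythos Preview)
Your proof is correct and follows essentially the same approach as the paper: both assemble Lemmas~\ref{zexpand}, \ref{Nintersection}, and \ref{cortransform}, and then carry out the same induction on the recurrences (\ref{prec}) and (\ref{qrec}) to show that $p_\alpha(n)$ and $q_\alpha(n)$ have leading term $4^\alpha\alpha!\,n^\alpha$ (equivalently, $p_\alpha(b/2)$ and $q_\alpha((b-1)/2)$ have leading term $2^\alpha\alpha!\,b^\alpha$). Your final paragraph making the factorial and power-of-$2$ bookkeeping explicit is a welcome addition that the paper leaves implicit.
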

\begin{proof}
Expand the Eynard-Orantin invariants about $z=0$, and apply Lemmas \ref{zexpand}, \ref{Nintersection} and \ref{cortransform} to get expressions for $M^g_n$.  To prove the proposition we need the polynomials $p_\alpha(b/2)$ and $q_\alpha\left((b-1)/2\right)$ used in the transformation (\ref{transform}) to have leading order coefficients $\alpha!2^{\alpha}$.  

By induction, suppose $p_\alpha(n)$ has leading coefficient $\alpha!2^{2\alpha}$.  Using the recursion for $p_{\alpha+1}(n)$, the leading part of $p_{\alpha+1}(n)$ is:
\begin{align*}
\alpha!2^{2\alpha}\left(4n^2(n^{\alpha}-(n-1)^\alpha)\right)&+4\alpha!2^{2\alpha}n^{\alpha+1}+O(n^{\alpha})\\
&=\alpha!2^{2\alpha}\left(4n^2(\alpha n^{\alpha-1})+4n(n^\alpha)\right)+O(n^{\alpha})\\
&=(\alpha+1)!2^{2\alpha+2}n^{\alpha+1}+O(n^{\alpha})
\end{align*}
Similarly, the recursion for $q_{\alpha+1}(n)$ shows that the leading part of $q_{\alpha+1}(n)$ is:
$$\alpha!2^{2\alpha}\left(4n^2(\alpha n^{\alpha-1})+(4n+1)n^\alpha\right) +O(n^{\alpha-1})=(\alpha+1)!2^{2\alpha+2}n^{\alpha+1} +O(n^{\alpha})$$ so that all transformation polynomials have the required leading order coefficients.
\end{proof}

\subsection{Divisor and string equations.}
For the remainder of the paper we specialise to the curve (\ref{eq:lnz}):
\begin{equation*} 
C=\begin{cases}x=z+1/z\\ 
y=\ln{z}.
\end{cases}
\end{equation*}
The recursions (\ref{Mdivisor}) and (\ref{Mstring}) below use the terms {\em divisor} and {\em string} equations which anticipate the corresponding recursions (\ref{GWdivisor}) and (\ref{GWstring}) satisfied by Gromov-Witten invariants.
\begin{theorem}\label{th:Mgenus0}The coefficients $M^g_n$ in the expansion of the Eynard-Orantin invariants of (\ref{eq:lnz}) about $x=\infty$ satisfy the divisor and string equations.  For $2d=2-2g-n+\sum_{i=1}^nb_i$,
\begin{align}
M^g_{n+1}(b_1,\dots,b_n,1)&=dM^g_n(b_1,\dots,b_n)\label{Mdivisor}\\
M^g_{n+1}(b_1,\dots,b_n,0)&=\sum_{i=1}^nb_iM^g_{n}(b_1,\dots,b_i-1,\dots,b_n)\label{Mstring}
\end{align}
Where 
\begin{equation*}
M^g_{n+1,k}(b_1,\dots,b_n,0):=\prod_{i=1}^k b_i\binom{b_i-1}{\frac{b_i-1}{2}}\prod_{i={k+1}}^n\frac{b_i}{2}\binom{b_i}{\frac{b_i}{2}}m^g_{n+1,k}(b_1,\dots,b_n,0).
\end{equation*}
These uniquely determine all genus zero terms and, together with the top degree terms known from Proposition~\ref{Mpoly}, determine all genus one terms.
\end{theorem}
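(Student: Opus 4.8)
The plan is to derive both recursions directly from the Eynard-Orantin recursion (\ref{eq:EOrec}), or rather from its consequences, the string and dilaton equations (\ref{EOstring}) and (\ref{EOdilaton}), specialised to the curve (\ref{eq:lnz}) where $y=\ln z$, so that $y(z)-y(\hat z)=\ln z-\ln(1/z)=2\ln z$ near the branch point $z=1$ (and similarly $2\ln(-z)$ near $z=-1$, using $\hat z=1/z$ throughout since $x(z)=z+1/z$). First I would establish the string equation (\ref{Mstring}). Equation (\ref{EOstring}) with $m=0$ reads $\sum_\alpha\mathrm{Res}_{z=\alpha}\,\ln z\cdot\omega^g_{n+1}(z_S,z)=-\sum_i\partial_{z_i}(\omega^g_n(z_S)/dx(z_i))$. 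The left-hand side, once one recalls that for this curve $\Phi(z)=\int^z y\,dx'$ has the right antiderivative structure, needs to be matched against the coefficient extraction $M^g_{n+1}(b_1,\dots,b_n,0)$, which by definition is $(-1)^{n+1}\mathrm{Res}_{x_i=\infty}\prod x_i^{b_i}\cdot x^{0}\cdot\omega^g_{n+1}$. The key computation is to show that residues at the branch points of $y(z)\,\omega^g_{n+1}$ translate, after passing to the $x$-expansion at $x=\infty$, precisely into the operator $\sum_i b_i$ acting with a shift $b_i\mapsto b_i-1$ on $M^g_n$; the factor $x_i^{b_i}/dx_i$ differentiated gives the $b_i$ and the shift, while the $\ln z$ insertion is exactly the $x^0$-insertion on the $(n+1)$st variable in the $M$-normalisation.

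Next I would establish the divisor equation (\ref{Mdivisor}). Here the natural tool is the dilaton equation (\ref{EOdilaton}) together with the $m=1$ case of (\ref{EOstring}), or more directly an analysis of $\mathrm{Res}\,\ln z\cdot x(z)\cdot\omega^g_{n+1}$: inserting $x^1$ in the $M$-normalisation on the last variable picks out $M^g_{n+1}(b_1,\dots,b_n,1)$ by the definition of the coefficients, since the extra factor of $x$ lowers the exponent by one. The right-hand side should be handled by the dilaton-type identity: $(2g-2+n)\omega^g_n$ combined with the string operators produces exactly $d\,M^g_n$ once one uses the relation $2d=2-2g-n+\sum b_i$ to rewrite $d$. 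Concretely, I expect $d\cdot M^g_n$ to emerge as $\tfrac12(\sum b_i - (2g-2+n))M^g_n$, with $\sum b_i M^g_n$ coming from a string-equation term and $(2g-2+n)M^g_n$ from the dilaton equation. The cleanest route may be to combine $m=0$ and $m=1$ of (\ref{EOstring}) with (\ref{EOdilaton}) and read off the two recursions simultaneously.

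For the final sentence — that these recursions determine all genus zero terms, and together with Proposition~\ref{Mpoly} determine all genus one terms — I would argue by induction on $n$. By Proposition~\ref{Mpoly}, $M^g_{n,k}$ is a known prefactor times a polynomial $m^g_{n,k}$ of degree $3g-3+n$ in the $b_i$. For $g=0$: the base cases $(0,1)$ and $(0,2)$ are the exceptional explicit formulae, and for $n\geq 3$ the polynomial $m^0_{n,k}$ has degree $n-3$; evaluating (\ref{Mdivisor}) and (\ref{Mstring}) at enough parity classes and enough integer points expresses all of $m^0_{n,k}$ in terms of $m^0_{n-1,k'}$, and a dimension count shows the resulting linear system has full rank — this is the step I expect to be the main obstacle, since one must check that specialising a symmetric polynomial of degree $n-3$ in $n$ variables to the slices $b_{n+1}=0$ and $b_{n+1}=1$ (with the induction supplying $M^g_n$) pins down every coefficient, i.e. that no nonzero polynomial of the allowed degree and symmetry vanishes on all these slices. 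For $g=1$, the polynomial $m^1_{n,k}$ has degree $n$, so the recursions alone drop the degree by too little to be self-contained; but Proposition~\ref{Mpoly} supplies the top-degree coefficients (the $\psi$-intersection numbers), reducing the unknown to a polynomial of degree $n-1$, and the same specialisation-and-rank argument then closes the induction, with the $(1,1)$ case as base.
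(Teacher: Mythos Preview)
Your high-level architecture matches the paper: use (\ref{EOstring}) with $m=0$ for the string equation, combine (\ref{EOstring}) with $m=1$ and the dilaton equation (\ref{EOdilaton}) for the divisor equation, then argue uniqueness via the polynomial structure of Proposition~\ref{Mpoly}. However, there is a genuine gap in your treatment of the string equation, and a key identity missing in the divisor equation.

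\textbf{String equation.} You write that $M^g_{n+1}(b_1,\dots,b_n,0)$ ``by definition is $(-1)^{n+1}\mathrm{Res}_{x_i=\infty}\prod x_i^{b_i}\cdot x^{0}\cdot\omega^g_{n+1}$''. This is not the definition in the theorem statement: note that the prefactor in (\ref{Mprod}) contains $\tfrac{b_{n+1}}{2}\binom{b_{n+1}}{b_{n+1}/2}$, which vanishes at $b_{n+1}=0$, so the naive residue would give zero. The theorem \emph{defines} $M^g_{n+1,k}(b_1,\dots,b_n,0)$ by dropping this factor and evaluating the polynomial $m^g_{n+1,k}$ at $0$. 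Thus your assertion that ``the $\ln z$ insertion is exactly the $x^0$-insertion'' is false, and the LHS residue computation is more delicate than you suggest. The paper proceeds by integration by parts, $\sum_{\alpha=\pm1}\mathrm{Res}_{z=\alpha}\ln z\cdot\omega^g_{n+1}=-\sum_{\alpha=\pm1}\mathrm{Res}_{z=\alpha}\tfrac{dz}{z}\int_0^z\omega^g_{n+1}$, then shifts the residue to $z=\infty$ and invokes a lemma (Lemma~\ref{intseries}) saying that for $F(z)=\sum_{n\geq1}p(n)z^n$ with $p$ quasi-polynomial, $F(\infty)-F(0)=-p(0)$. This is exactly what produces $N^g_{n+1}(k_S,0)$ --- i.e.\ the polynomial evaluated at $0$ --- rather than a vanishing coefficient.

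\textbf{Divisor equation.} Your plan to combine $m=1$ string and dilaton is correct and matches the paper, but you are missing the algebraic identity that makes the LHS tractable: for $y=\ln z$ and $x=z+1/z$ one has $yx-\int y\,dx = z-1/z+c$, a rational function. This is what allows the residue at $z=\pm1$ to be moved to $z=0,\infty$ and identified with $2M^g_{n+1}(b_1,\dots,b_n,1)$.

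\textbf{Uniqueness.} You correctly flag this as the main obstacle. The paper's key lemma (Lemma~\ref{th:sympol}) is: a polynomial $f_k(t_1,\dots,t_n)$ symmetric in $t_1,\dots,t_k$ and in $t_{k+1},\dots,t_n$ is determined by $f_k(a,t_2,\dots,t_n)$ and $f_k(t_1,\dots,t_{n-1},b)$ if $\deg f_k<n$, and up to a constant multiple of $\prod_{i\leq k}(t_i-a)\prod_{i>k}(t_i-b)$ if $\deg f_k=n$. For $g=0$ this gives uniqueness directly; for $g=1$ the polynomial has degree $n+1$ in $n+1$ variables, so the recursions determine it up to a \emph{single constant} (not a degree-$(n-1)$ polynomial as you suggest), and that constant is the coefficient of $b_1\cdots b_{n+1}$, fixed by Proposition~\ref{Mpoly}.
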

\begin{proof}  In the following we use $ \int_0^z\omega^g_{n+1}(z_S,z')$ which is well-defined (independently of the choice of path) since the residues of $\omega^g_{n+1}$ are zero.  The calculations below will not be sensitive to the constant term arising from the choice of intial point 0 in the integral.  To prove equation (\ref{Mdivisor}), we use the string and dilaton equations (\ref{EOstring})-(\ref{EOdilaton}).
\begin{align*}
\sum_{\alpha=\pm1}\res{z=\alpha}\left( yx-\int ydx\right)\omega^g_{n+1}(z_S,z)&=\sum_{\alpha=\pm1}\res{z=\alpha}\left( z\text{ln}(z)+\frac{\text{ln}(z)}{z}-\int^z_{z_0} \text{ln}(t)(1-\frac{1}{t^2})dt\right)\omega^g_{n+1}(z_S,z)\\
&=\sum_{\alpha=\pm1}\res{z=\alpha}\left( z-\frac{1}{z}+c\right)\omega^g_{n+1}(z_S,z)\\
&=-\sum_{\alpha=0,\infty}\res{z=\alpha}\left(z-\frac{1}{z}\right)\omega^g_{n+1}(z_S,z)\\
&=-2\res{z=\infty}z\omega^g_{n+1}(z_S,z)\indent\indent \text{\ [Since $\omega^g_{n+1}(z_S,1/z)=-\omega^g_{n+1}(z_S,z)$]}\\
&=-2\res{z=0}x(z)\omega^g_{n+1}(z_S,z)\indent\indent \text{\ [Add residue free term]}\\
&=2\sum_{b_1,\dots,b_n=0}^\infty \frac{M^g_{n+1}(b_1,\dots,b_n,1)}{x_1^{b_1+1}\cdots x_n^{b_n+1}}dx_1\cdots dx_n.
\end{align*}
While the right hand side of (\ref{EOstring})-(\ref{EOdilaton}) gives 
\begin{align*}
-\sum_{i=1}^n\partial_{z_i}\left(\frac{x(z_i)\omega^g_n(z_S)}{dx(z_i)}\right)&-(2g-2+n)\omega^g_n(z_S)
\\ &=-\sum_{i=1}^n\partial_{x_i}\left(x_i\sum_{b_1,\dots,b_n=0}^\infty \frac{M^g_{n}(b_1,\dots,b_n)}{x_1^{b_1+1}\cdots x_n^{b_n+1}}dx_1\cdots d\hat{x}_i\cdots dx_n\right)\\&\quad-(2g-2+n)\sum_{b_1,\dots,b_n=0}^\infty \frac{M^g_{n}(b_1,\dots,b_n)}{x_1^{b_1+1}\cdots x_n^{b_n+1}}dx_1\cdots dx_n\\
&=(\sum_{i=1}^n b_i+2-2g-n)\sum_{b_1,\dots,b_n=0}^\infty \frac{M^g_{n}(b_1,\dots,b_n)}{x_1^{b_1+1}\cdots x_n^{b_n+1}}dx_1\cdots dx_n.
\end{align*}
Equating coefficients and using $2d=2-2g-n+\sum_{i=1}^nb_i$ gives (\ref{Mdivisor}) as required.

To prove (\ref{Mstring}) take $m=0$ in (\ref{EOstring}). When expanded around $x_i=\infty$ the RHS gives
\begin{align*}
-\sum_{i=1}^n\partial_{z_i}\left(\frac{\omega^g_n(z_1,\dots,z_n)}{dx(z_i)}\right)&=-\sum_{i=1}^n\partial_{x_i}\left(\sum_{b_1,\dots,b_n=0}^\infty \frac{M^g_{n}(b_1,\dots,b_n)}{x_1^{b_1+1}\cdots x_n^{b_n+1}}dx_1\cdots d\hat{x}_i\cdots dx_n\right)\\
&=\sum_{b_1,\dots,b_n=0}^\infty \frac{\sum_{i=1}^n (b_i+1)M^g_{n}(b_1,\dots,b_n)}{x_1^{b_1+1}\cdots x_i^{b_i+2}\cdots x_n^{b_n+1}}dx_1\cdots dx_n\\
&=\sum_{b_1,\dots,b_n=0}^\infty \frac{\sum_{i=1}^n b_iM^g_{n}(b_1,\dots,b_i-1,\dots,b_n)}{x_1^{b_1+1}\cdots x_n^{b_n+1}}dx_1\cdots dx_n.
\end{align*}
Where $d\hat{x}_i$ denotes $dx_i$ missing from the first term.  For the LHS, we need the following lemma.
\begin{lemma}\label{intseries} Let $F(z)=\sum_{n=1}^\infty p(n)z^n$ for a quasi-polynomial $p(n)$.  Then $F(z)$ is a meromorphic function on $\bp^1$, analytic at 0 and $\infty$, satisfying $F(\infty)-F(0)=-p(0)$.
\end{lemma}
\begin{proof}  Recall that $p(n)$ is quasi-polynomial in $n$ if it is polynomial on each coset of a sublattice $m\bz\subset\bz$, i.e. it is represented by $m$ polynomials $p_a(n)$, $a=1,...,m$, for $n\equiv a(m)$, and $p(0):=p_m(0)$.

Decompose $F(z)$ into
\[ F(z)=\sum_{n=1}^\infty p(n)z^n=\sum_{a=1}^m\ \sum_{0<n\equiv a(m)}p_a(n)z^n\]
and further decompose $p_a(n)$ into linear combinations of monomials $n^k$.  Then
\[\sum_{0<n\equiv a(m)}n^kz^n=\left( z\frac{d}{dz}\right)^k\sum_{0<n\equiv a(m)}z^n=\left( z\frac{d}{dz}\right)^k\frac{z^a}{1-z^m}\]
which vanishes at $z=\infty$, since the denominator has greater degree than than the numerator, except when $k=0$ and $a=m$, where at $z=\infty$ it evaluates to $-1$.
\end{proof}
The LHS of (\ref{EOstring}) now becomes
\begin{align*}
\sum_{\alpha=\pm1}\res{z=\alpha}\text{ln}(z)\omega^g_{n+1}(z_S,z)&=-\sum_{\alpha=\pm1}\res{z=\alpha}\frac{dz}{z}\int_0^z\omega^g_{n+1}(z_S,z')~~~~~\indent\indent \text{\ [Integrating by parts]}\\
&=\sum_{\alpha=0,\infty}\res{z=\alpha}\frac{dz}{z}\int_0^z\omega^g_{n+1}(z_S,z')\\
&=\res{z=\infty}\frac{dz}{z}\int_0^z\omega^g_{n+1}(z_S,z')\indent\indent \text{\ [Analytic at $z=0$]}\\
&=-\int_0^\infty \omega^g_{n+1}(z_S,z)\\
&=\sum_{k_1,\dots,k_n=1}^\infty k_1\cdots k_n N^g_{n+1}(k_S,0)z_S^{k_S-1}dz_S \indent\indent \text{\ [Lemmas \ref{zexpand}, \ref{intseries}]}\\
&=\sum_{b_1,\dots,b_n=0}^\infty \frac{\prod_{i=1}^k b_i\binom{b_i-1}{\frac{b_i-1}{2}}\prod_{i={k+1}}^n\frac{b_i}{2}\binom{b_i}{\frac{b_i}{2}}m^g_{n+1,k}(b_S,0)}{x_1^{b_1+1}\cdots x_n^{b_n+1}}dx_1\cdots dx_n
\end{align*}
where in the final step we have changed the first $n$ variables from expansions in $z$ to $x$ using the transform from Lemma~\ref{cortransform}.  The last variable contains only a constant term which remains unchanged under this transform.

\begin{lemma}  \label{th:sympol}
Let $f_k(t_1,...,t_n)$ be a polynomial symmetric in the variables $t_1,...,t_k$ and also symmetric in the variables $t_{k+1},...,t_n$.  Evaluation at the two variables $f_k(a,t_2,...,t_n)$ and $f_k(t_1,...,t_{n-1},b)$ for any $a$, $b$ determines any such $f_k$ of degree less than $n$ and if the degree of $f_k$ equals $n$ it determines $f_k$ up to a constant. 
\end{lemma}
\begin{proof}
Suppose $g_k(t_1,...,t_n)$ were another polynomial of the same degree as $f_k$, symmetric in variables of the same parity satisfying $g_k(a,t_2,...,t_n)=f_k(a,t_2,...,t_n)$ and $g_k(t_1,...,t_{n-1},b)=f_k(t_1,...,t_{n-1},b)$.  Define $h_k(t_1,...,t_n)=g_k(t_1,...,t_n)-f_k(t_1,...,t_n)$.  Then $h_k(a,t_2,...,t_n)=0=h_k(t_1,...,t_{n-1},b)$.  By symmetry 
$$h_k(t_1,...,t_n)=\prod_{i=1}^k(t_i-a)\prod_{i=k+1}^n(t_i-b)\tilde{h}_k(t_1,...,t_n)$$ for some other polynomial $\tilde{h}_k$. 
If $\deg f_k=\deg h_k<n$ then $\tilde{h}_k\equiv 0$ and $g_k(t_1,...,t_n)=f_k(t_1,...,t_n)$.  If $\deg f_k=\deg h_k$ then $\tilde{h}_k\equiv \lambda$ is constant and $g_k(t_1,...,t_n)=f_k(t_1,...,t_n)+\lambda\prod_{i=1}^k(t_i-a)\prod_{i=k+1}^n(t_i-b)$.
\end{proof}
Note that the lemma makes sense and remains true if $k=0$ or $n$.

To complete the proof of Theorem~\ref{th:Mgenus0} we need to show that the divisor and string equations determine the genus zero and genus one Eynard-Orantin invariants.

For any $k=1,...,n$, the divisor equation (\ref{Mdivisor}) and string equation (\ref{Mstring}) allow us to compute $m^g_{n+1,k}(b_1,\dots,b_n,0)$ and $m^g_{n+1,k}(1,b_2,\dots,b_{n+1})$ from smaller $m^g_{n,k}$.  (We have assumed that $b_1$ is odd and $b_{n+1}$ is even.  If $k=0$ or $n+1$, the string and divisor equation respectively are alone sufficient to determine $m^0_{n+1,k}$ using precisely the same argument.)   For $g=0$ and each $k$, $m^0_{n+1,k}(b_1,\dots,b_{n+1})$ is a polynomial of degree $n-2$, symmetric in variables of the same parity.  Hence Lemma~\ref{th:sympol} shows that $m^0_{n+1,k}(b_1,\dots,b_{n+1})$ is uniquely determined from smaller $m^0_{n,k}$.

For $g=1$ and each $k$, $m^1_{n+1,k}(b_1,\dots,b_{n+1})$ is a polynomial of degree $n+1$, symmetric in variables of the same parity.  Hence Lemma~\ref{th:sympol} shows that the string and dilaton equations determine $m^1_{n+1,k}(b_1,\dots,b_{n+1})$ from smaller $m^1_{n,k}$ up to $\lambda\cdot\prod_{i=1}^k(b_i-1)\prod_{i=k+1}^{n+1}b_i$.  The constant $\lambda$ can be determined from Proposition~\ref{Mpoly} which gives the coefficients of all top degree terms in terms of intersection numbers on $\overline{\modm}_{g,n}$.  In particular, the coefficient of $b_1...b_{n+1}$ in $m^1_{n+1,k}(b_1,\dots,b_{n+1})$ is $2^{1-n}\langle\tau_1^n\rangle=\displaystyle\frac{2^{1-n}(n-1)!}{24}$.
\end{proof}

\section{Gromov-Witten invariants} \label{sec:GW}

\subsection{The moduli space of stable maps} 
Let $X$ be a projective algebraic variety and consider $(C,x_1,\dots,x_n)$ a connected smooth curve of genus $g$ with $n$ distinct marked points.  For $\beta \in H_2(X,\bz)$ the moduli space of maps $\modm^g_n(X,\beta)$ consists of morphisms 
$$\pi: (C,x_1,\dots,x_n)\rightarrow X$$
satisfying $\pi_\ast [C]=\beta$ quotiented by isomorphisms of the domain $C$ that fix each $x_i$.  The moduli space has a compactification $\overline{\modm}^g_n(X,\beta)$ given by the moduli space of stable maps:  the domain $C$ is a connected nodal curve; the distinct points $\{x_1,\dots,x_n\}$ avoid the nodes; any genus zero irreducible component of $C$ with fewer than three distinguished points (nodal or marked) must not be collapsed to a point; any genus one irreducible component of $C$ with no marked point must not be collapsed to a point.  The moduli space of stable maps has irreducible components of different dimensions but its expected or virtual dimension is 
\[\dim\overline{\modm}^g_n(X,\beta)=\langle c_1(X),\beta\rangle +(\dim X-3)(1-g)+n.\]

\subsubsection{Cohomology on $\overline{\modm}^g_n(X,\beta)$}
Let $\mathcal{L}_i$ be the cotangent bundle over the $i$th marked point and $\psi_i\in H^2(\overline{\modm}^g_n(X,\beta),\bq)$ be the first chern class of $\mathcal{L}_i$.

For $i=1,\dots,n$ there exist evaluation maps:
\begin{equation}
ev_i:\overline{\modm}^g_n(X,\beta)\longrightarrow X, ~ ev_i(\pi)=\pi(x_i)
\end{equation}
and classes $\gamma\in H^*(X,\bz)$ pull back to classes in $H^*(\overline{\modm}^g_n(X,\beta),\bq)$
\begin{equation}
ev_i^\ast:H^*(X,\bz)\longrightarrow H^*(\overline{\modm}^g_n(X,\beta),\bq)
\end{equation}

Gromov-Witten theory involves integrating cohomology classes, often called descendent classes, of the form
$$\tau_{b_i}(\gamma)=\psi_i^{b_i}ev^\ast_i(\gamma).$$

These are integrated against the \textit{virtual fundamental class}, $[\overline{\modm}^g_n(X,\beta)]^{vir}$, the existence and construction of which is highly nontrivial.  

Gromov-Witten invariants quite generally satisfy divisor, string and dilaton equations \cite{WitTwo} and topological recursion relations arising from relations on the moduli space of curves $\overline{\modm}_{g,n}$, \cite{GetTop}.  We will write these relations only in the special case when the target is $\bp^1$.

\subsection{Specialising to $\bp^1$}
We now only consider the specific case of Gromov-Witten invariants of $\bp^1$.   Let $\omega\in H^2(\bp^1,\bq)$ be the Poincare dual class of a point and $1\in H^0(\bp^1,\bq)$ Poincare dual to the fundamental class.  We consider the invariants  
\begin{equation}\label{gropoint1}
\left\langle \prod_{i=1}^l\tau_{b_i}(1)  \prod_{i=l+1}^n\tau_{b_i}(\omega)\right\rangle ^{g}_{d}=\int_{[\overline{\modm}^g_n(\bp^1,d)]^{vir}} \prod_{i=1}^l\psi_i^{b_i} \prod_{i=l+1}^n\psi_i^{b_i}ev_i^\ast(\omega)
\end{equation}
where we consider only connected invariants and (\ref{gropoint1}) is defined to be zero unless $\sum_{i=1}^n b_i=2g-2+2d+l$.  In our notation, often either $g$ or $d$ will be missing when clear, since the dimension restraints define one from the other.  Our main interest is the case $l=0$, known as the (connected) stationary Gromov-Witten theory of $\bp^1$ since the images of the marked points are fixed.

We collect here a few properties of Gromov-Witten invariants of $\bp^1$ needed here.  We recommend reading \cite{OPaGro}, \cite{OPaGro2} and \cite{OPaVir}, for a thorough treatment of this case.  We use the following divisor, string and dilaton equations \cite{WitTwo} principally for stationary Gromov-Witten invariants.  For $2d=2-2g+\sum_{i=1}^n b_i$, and $\alpha_i\in \{1,\omega\}$
\begin{align}
{\rm\bf divisor\ equation}\quad\left\langle \tau_0(\omega)\tau_{b_1}(\alpha_1)\cdots \tau_{b_n}(\alpha_n)\right\rangle_d&=d\left\langle \tau_{b_1}(\alpha_1)\cdots \tau_{b_n}(\alpha_n)\right\rangle_d\label{GWdivisor}\\
&\quad+\sum_{i=1}^n\left\langle \tau_{b_1}(\alpha_1)\cdots\tau_{b_i-1}(\alpha_i\cup\omega)\cdots \tau_{b_n}(\alpha_n)\right\rangle_d\nonumber\\
{\rm\bf string\ equation}\quad\ \ \left\langle \tau_0(1)\tau_{b_1}(\alpha_1)\cdots \tau_{b_n}(\alpha_n)\right\rangle_d&=\sum_{i=1}^n\left\langle \tau_{b_1}(\alpha_1)\cdots\tau_{b_i-1}(\alpha_1)\cdots \tau_{b_n}(\alpha_n)\right\rangle_d\label{GWstring}\\
{\rm\bf dilaton\ equation}\quad\left\langle \tau_1(1)\tau_{b_1}(\alpha_1)\cdots \tau_{b_n}(\alpha_n)\right\rangle^g&=(2g-2+n)\left\langle \tau_{b_1}(\alpha_1)\cdots \tau_{b_n}(\alpha_n)\right\rangle^g\label{GWdilaton}
\end{align}
where we define $\tau_{b}(0)=0$.  Consider the generating function for descendent classes
\begin{equation*}
F=\exp \sum_{b=0}^\infty\left(t_b \tau_b(\omega)+s_b \tau_b(1)\right).
\end{equation*}
For $\alpha_i\in \{1,\omega\}$ the genus zero topological recursion \cite{WitTwo} is 
\begin{align}\label{fullzerotoprec}
\left\langle \tau_{b_1}(\alpha_1) \tau_{b_2}(\alpha_2) \tau_{b_3}(\alpha_3) F\right\rangle^0&=\left\langle \tau_0(1)\tau_{b_1-1}(\alpha_1)F\right\rangle^0 \left\langle \tau_0(\omega)\tau_{b_2}(\alpha_2)\tau_{b_3}(\alpha_3)F\right\rangle^0 \\
\nonumber &+\left\langle \tau_0(\omega)\tau_{b_1-1}(\alpha_1)F\right\rangle^0 \left\langle \tau_0(1)\tau_{b_2}(\alpha_2)\tau_{b_3}(\alpha_3)F\right\rangle^{0}
\end{align}
and the genus one topological recursion is
\begin{align}\label{fullonetoprec}
\left\langle \tau_{b_1}(\alpha_1)F\right\rangle^1=\left\langle \tau_0(1)\tau_{b_1-1}(\alpha_1)F\right\rangle^0\left\langle \tau_0(\omega)F\right\rangle^1 &+\left\langle \tau_0(\omega)\tau_{b_1-1}(\alpha_1)F\right\rangle^0\left\langle \tau_0(1)F\right\rangle^1\\ \nonumber &+\frac{1}{12}\left\langle\tau_0(1)\tau_0(\omega) \tau_{b_1-1}(\alpha_1)F\right\rangle^0.
\end{align}

In \cite{OPaGro}, Okounkov and Pandharipande show that for Gromov-Witten invariants that allow disconnected domains (denoted by the superscript $^{\bullet}$) the following relation holds:
\begin{equation}  \label{eq:gwplanch}
\big\langle \prod_{i=1}^n \tau_{b_i}(\omega)\big\rangle_d^{\bullet } = \sum_{|\lambda |=d} \Big(\frac{\dim\lambda}{d!}\Big)^2\prod_{i=1}^n \frac{\textbf{p}_{b_i+1}(\lambda)}{(b_i+1)!}
\end{equation}
where the sum is over all partitions of $d$ and for a partition $\lambda$,  $\textbf{p}_{k}(\lambda)$ is the shifted symmetric power sum defined by
$$\textbf{p} _{k}(\lambda)=\sum_{i=1}^\infty \big[(\lambda_i-i+\frac{1}{2})^{k}-(-i+\frac{1}{2})^k\big]+(1-2^{-k})\zeta(-k).$$

\section{Proof of Theorem \ref{th:main}}\label{sec:proofofmain}
The strategy of the proof of Theorem \ref{th:main} will be to use recursions to uniquely determine both the Eynard-Orantin invariants and the Gromov-Witten invariants of $\bp^1$ and compare.  The obvious candidates for the genus 0 and 1 Eynard-Orantin invariant are the divisor and string equations, (\ref{Mdivisor}) and (\ref{Mstring}).  The genus 0 and 1 Gromov-Witten invariants of $\bp^1$ are determined by the topological recursion relations (\ref{fullzerotoprec}) and (\ref{fullonetoprec}).  However, the two sets of recursion relations are not compatible, so we first produce new recursion relations for the stationary Gromov-Witten invariants of $\bp^1$, given in Section~\ref{sec:statgw}, which are interesting in their own right, and serve our purposes here. 

\subsection{Polynomial behaviour of Gromov-Witten invariants}  \label{sec:gwpol}
 We begin by proving the following weaker version of Theorem~\ref{th:GWquasi}.
\begin{proposition} \label{GWquasi}For $g=0$ and 1, the stationary Gromov-Witten invariants are of the form
\begin{equation}\label{eq:quasi1}
\left\langle \prod_{i=1}^k\tau_{2u_i}(\omega)\prod_{i=k+1}^n\tau_{2u_i-1}(\omega)\right\rangle^g=\frac{u_{k+1}\cdots u_n}{\prod_{i=1}^n u_i!^2}p^g_{n,k}(u_1,\dots,u_n)
\end{equation}
where $p^g_{n,k}(u_1,\dots,u_n)$ is a polynomial of degree $3g-3+n$ in the $u_i$'s, symmetric in the first $k$ and the last $n-k$ variables.
\end{proposition}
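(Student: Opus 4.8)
The plan is to extract the claimed structure directly from the Okounkov--Pandharipande formula \eqref{eq:gwplanch} together with the passage from disconnected to connected invariants. First I would fix the parity data: writing $b_i=2u_i$ for $i\le k$ and $b_i=2u_i-1$ for $i>k$, so that the relevant exponent on the shifted power sums is $b_i+1$, which is odd exactly when $i\le k$. The key elementary input is the structure of the shifted symmetric power sums $\mathbf{p}_{b+1}(\lambda)$ evaluated against the Plancherel measure $(\dim\lambda/d!)^2$: these are polynomials in the $\mathbf{p}_j$'s with $j\le b+1$, and there are classical formulae (e.g.\ via the content/hook expansions, or Kerov--Olshanski) expressing the Plancherel averages of products $\prod_i \mathbf{p}_{b_i+1}(\lambda)$ as explicit combinatorial sums. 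What one needs is that, after dividing by $\prod (b_i+1)!$ and summing over $|\lambda|=d$, the disconnected invariant $\langle\prod\tau_{b_i}(\omega)\rangle_d^\bullet$ is, up to the normalising factor $\prod_i \frac{u_{k+1}\cdots u_n}{u_i!^2}$, a quasi-polynomial in the $u_i$ of controlled degree; the parity-dependence of the normalisation (the factor $u_{k+1}\cdots u_n$ appearing only for the odd-indexed variables) comes from the fact that $\mathbf{p}_{2u}$ and $\mathbf{p}_{2u-1}$ behave differently under the shift by $\tfrac12$.

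The second step is to pass from disconnected to connected invariants by inclusion--exclusion over set partitions of $\{1,\dots,n\}$. Since the disconnected invariant factorises as a sum over ways of distributing both the degree $d$ and the marked points among connected components, one has $\langle\prod\tau_{b_i}(\omega)\rangle^\bullet = \sum_{P}\prod_{S\in P}\langle\prod_{i\in S}\tau_{b_i}(\omega)\rangle^{g_S}$ where one must also track genus; inverting this (Möbius inversion on the partition lattice) expresses the connected genus $g$ invariant as a signed sum of products of disconnected ones. The point is that the class of functions of the form ``(normalisation)\,$\times$\,(polynomial in $u_i$ of degree $\le 3g-3+n$, symmetric in the $k$ even and $n-k$ odd variables)'' is closed under the operations appearing in this inversion, provided one controls how the genus is bookkept --- the Euler characteristic additivity $\sum_S(3g_S-3+|S|) + (\#\text{components}-1)\cdot(\text{correction}) = 3g-3+n$ is what pins down the degree bound. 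I would restrict to $g=0$ and $g=1$ precisely here, since these are the cases where the genus expansion of $\mathbf{p}_{b+1}$-averages is simple enough (the genus $0$ part is the leading Plancherel asymptotics; genus $1$ is the first subleading correction), and where the degree count $3g-3+n$ is exactly what the extraction produces.

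The main obstacle, I expect, is the degree bound: showing that the polynomial $p^g_{n,k}$ has degree \emph{exactly} $3g-3+n$ (not larger) in the $u_i$, after dividing out the normalisation $u_{k+1}\cdots u_n/\prod u_i!^2$. A naive bound from the power sums $\mathbf{p}_{b_i+1}$ gives too much, and one needs to use the cancellation in the connected-from-disconnected inversion --- the leading-order (genus $0$ Plancherel) terms must combine so that the connected correlator loses the expected powers of the $u_i$. Concretely this amounts to a vanishing statement for top-degree coefficients of the disconnected invariants that is killed by Möbius inversion, analogous to the way cumulants of Gaussian-type measures have lower degree than moments. An alternative route that sidesteps some of this is to instead deduce Proposition~\ref{GWquasi} from the Eynard--Orantin side: once Theorem~\ref{th:main} identifies $\omega^g_n$ with $\Omega^g_n$, Proposition~\ref{Mpoly} already gives exactly the stated polynomiality of the $M^g_{n,k}$, and \eqref{gropoint} translates $M^g_{n,k}$ into $\langle\prod\tau_{b_i}(\omega)\rangle^g$ up to the factorials $(b_i+1)!$; but since Proposition~\ref{GWquasi} is used \emph{en route} to proving Theorem~\ref{th:main}, I would keep the proof self-contained on the Gromov--Witten side via \eqref{eq:gwplanch}, and treat the degree bound as the crux.
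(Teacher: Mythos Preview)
Your approach is genuinely different from the paper's, and the difference is instructive. The paper does not touch the Okounkov--Pandharipande Plancherel formula \eqref{eq:gwplanch} at all for this proposition; instead it proceeds by straightforward induction on $n$ using the genus~0 and genus~1 topological recursion relations \eqref{fullzerotoprec}, \eqref{fullonetoprec}. After computing the $1$-, $2$-, and $3$-point invariants explicitly as base cases, one differentiates the generating-function recursions to obtain \eqref{zerotoprec2} and its genus~1 analogue, and then checks term by term that the claimed factor $u_{k+1}\cdots u_n/\prod u_i!^2$ can be pulled out of each summand, leaving a polynomial of the correct degree in each variable. The only subtleties are the low-$|I|$ boundary terms (where $1$- and $2$-point functions appear and must be handled by hand) and the presence of $\tau_0(1)$ insertions, dealt with via the string equation. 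The degree bound $3g-3+n$ falls out automatically because the recursion expresses the $n$-point invariant in terms of products of smaller ones whose degrees add correctly.

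Your route via \eqref{eq:gwplanch} and M\"obius inversion is not wrong in principle, but as written it is a programme rather than a proof, and the gap you yourself flag is real. The formula \eqref{eq:gwplanch} gives the \emph{disconnected} invariant at fixed degree $d$, which mixes all genera; isolating the genus~$g$ piece is exactly the asymptotic/formal expansion that the paper's Section~\ref{sec:mat} carries out non-rigorously via a matrix integral. Making this rigorous for fixed $g$ requires precisely the kind of cancellation you describe (``cumulants have lower degree than moments''), but you do not supply the mechanism: there is no lemma in your sketch that actually bounds the degree of the connected correlator after inversion. The paper's inductive approach sidesteps this entirely because the topological recursions are already organised by genus and the degree bookkeeping is one summand at a time.
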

\begin{proof}
We prove this by induction using the topological recursion relations for genus zero and genus one Gromov-Witten invariants.  
%\begin{align}\label{onetoprec}
%\left\langle \tau_{b_1}(\omega) F\right\rangle^{g=1}&=\left\langle \tau_0(1)\tau_{b_1-1}(\omega) F\right\rangle^{g=0}\left\langle \tau_{0}(\omega) F\right\rangle^{g=1}\\\nonumber &+\left\langle \tau_0(\omega)\tau_{b_1-1}(\omega) F\right\rangle^{g=0}\left\langle \tau_{0}(1) F\right\rangle^{g=1}
%+\frac{1}{12}\left\langle \tau_0(1)\tau_0(\omega)\tau_{b_1-1}(\omega) F\right\rangle^{g=0}.
%\end{align}

{\bf Genus zero case.}
\subsubsection{Initial cases} The recursion (\ref{fullzerotoprec}) can be used along with the string and divisor equations to explicitly find expressions for genus zero $1,2$ and 3-point invariants.  The one point invariants \cite{OPaGro}
$$\left\langle \tau_{2u}(\omega)\right\rangle^{0} =\frac{1}{(u+1)!^2}=\frac{1}{u!^2}\frac{1}{(u+1)^2}.$$
Let $\alpha_i=\omega$, $F=1$, $b_1=2u_1$, $b_2=2u_2$ and $b_3=0$.  Repeated application of the string and divisor equations gives
\begin{align*}\left\langle \tau_{2u_1}(\omega)\tau_{2u_2}(\omega)\tau_{0}(\omega)\right\rangle^{0} &=(u_2+1)(u_2+1)\left\langle \tau_{2u_1-2}(\omega)\right\rangle^{0} \left\langle \tau_{2u_2}(\omega)\right\rangle^{0}+0 \\
&=\frac{(u_2+1)^2}{u_1!^2(u_2+1)!^2}=\frac{1}{u_1!^2u_2!^2}\\
\Rightarrow\quad\quad\quad \indent \left\langle \tau_{2u_1}(\omega)\tau_{2u_2}(\omega)\right\rangle^{0}&=\frac{1}{u_1!^2u_2!^2}\frac{1}{u_1+u_2+1}.
\end{align*}
Similarly, $\alpha_i=\omega$, $F=1$, $b_1=2u_1-1$, $b_2=2u_2-1$ and $b_3=0$ gives
$$\indent \left\langle \tau_{2u_1-1}(\omega)\tau_{2u_2-1}(\omega)\right\rangle^{0}=\frac{u_1u_2}{u_1!^2u_2!^2}\frac{1}{u_1+u_2}.$$
As mentioned in the introduction, the one-point and two-point functions still satisfy Proposition~(\ref{GWquasi}) if we interpret degree -2 and -1 polynomials to mean the reciprocal of degree 2 and degree 1  polynomials.

We can now use (\ref{fullzerotoprec}), the string and divisor equations to compute the initial step of the induction - the three point invariants:
\begin{align}\label{03GW}
\langle \tau_{2u_1}(\omega)\tau_{2u_2}(\omega)\tau_{2u_3}(\omega) \rangle^{0} &= \frac{1}{u_1!^2u_2!^2u_3!^2}\\
\nonumber \langle \tau_{2u_1}(\omega)\tau_{2u_2-1}(\omega)\tau_{2u_3-1}(\omega) \rangle^{0}&= \frac{u_2u_3}{u_1!^2u_2!^2u_3!^2}
\end{align}

Before we apply the inductive step, we need the following lemma.
\begin{lemma}Proposition~\ref{GWquasi} can be extended to include $\tau_0(1)$ terms. 
\end{lemma}
\begin{proof}This uses the string equation (\ref{GWstring}).  Suppose Proposition~\ref{GWquasi} holds for the right hand side of the string equation (\ref{GWstring}).  Then we must check that the left hand side is the required degree polynomial.  Let $K=\{1,\dots,k\}$ and $J=\{{k+1},\dots n\}$. The equation can be written
%\begin{align*}
%\left\langle \tau_0(1)\prod_{i=1}^k\tau_{2b_i-1}(\omega)\cdots \prod_{i=k+1}^n\tau_{2b_i}(\omega)\right\rangle_d&=\sum_{i=1}^k \frac{b_1 \cdots b_k}{b_1!^2\cdots (b_i-1)!^2\cdots b_n!^2}p^g_{n,k-1}(b_1,\dots,b_k,b_i-1,\dots b_n)\\
%&+\sum_{i=k+1}^n \frac{b_1\cdots b_n b_i}{b_1!^2\cdots b_n!^2}p^g_{n,k+1}(b_i,b_1,\dots,b_k,\dots,\hat{b}_{i},\dots,b_n)\\
%&=\frac{b_1\cdots b_k}{\prod_{i=1}^n b_i!^2}\Big(\sum_{i=1}^k b_ip^g_{n,k-1}(b_1,\dots, \hat{b}_i \dots,b_k,b_i,\dots,b_n)\\
%&+\sum_{i=k+1}^n b_ip^g_{n,k+1}(b_i,b_1,\dots,b_k,\dots,\hat{b}_{i},\dots,b_n)\Big)
%\end{align*}
\begin{align*}
\left\langle \tau_0(1)\prod_{i=1}^k\tau_{2u_i}(\omega) \prod_{i=k+1}^n\tau_{2u_i-1}(\omega)\right\rangle^g&=\sum_{i=1}^k \frac{u_iu_{k+1} ...u_n}{u_1!^2... u_n!^2}p^g_{n,k-1}(u_{K\setminus i},u_i,u_J)\\
&\quad+\sum_{i=k+1}^n \frac{u_{k+1}...\hat{u}_i ...u_n}{u_1!^2...(u_i-1)!^2...u_n!^2}p^g_{n,k+1}(u_K,u_i-1,u_{J\setminus i})\\
&=\frac{u_{k+1}\cdots u_n}{\prod_{i=1}^n u_i!^2}\Big(\sum_{i=1}^k u_ip^g_{n,k-1}(u_{K\setminus i},u_i,u_J)
+\hspace{-2mm}\sum_{i=k+1}^n\hspace{-2mm} u_ip^g_{n,k+1}(u_K,u_i-1,u_{J\setminus i})\Big)\\
&=:\frac{u_{k+1}\cdots u_n}{\prod_{i=1}^n u_i!^2}\tilde{p}^g_{n,k}(u_K,u_J)
\end{align*}
Where $\hat{u}_i$ means to exclude the $u_i$ term and we note that $p^g_{n,{k\pm 1}}$ is a polynomial of degree $3g-3+n$, symmetric in the first $k\pm 1$ and last $n-(k\pm 1)$ variables.  Thus $\tilde{p}^g_{n,k}(u_K,u_J)$ has degree $3g-3+n+1$ and the required symmetries.

\end{proof}

\subsubsection{Induction.}  Suppose Proposition~\ref{GWquasi} is true for $g=0$ and $n'<n$.  Apply $$\frac{d^{n-3}}{dt_{b_4}\cdots dt_{b_n}}\Big|_{\bf{t}=0}$$ to (\ref{fullzerotoprec}) and let $\alpha_i=\omega$ to obtain the recursion
\begin{align}\label{zerotoprec2}
\left\langle \tau_{b_1}(\omega)\cdots \tau_{b_n}(\omega) \right\rangle^0&=\sum_{I\subset\{4,\dots,n\}} \Big(\left\langle \tau_0(1)\tau_{b_1-1}(\omega)\tau_I(\omega)\right\rangle^0 \left\langle \tau_0(\omega)\tau_{b_2}(\omega)\tau_{b_3}(\omega)\tau_{CI}(\omega)\right\rangle^0 \\ \nonumber
\nonumber &+\left\langle \tau_0(\omega)\tau_{b_1-1}(\omega)\tau_I(\omega)\right\rangle^0 \left\langle \tau_0(1)\tau_{b_2}(\omega)\tau_{b_3}(\omega)\tau_{CI}(\omega)\right\rangle^0\big)
\end{align}
for $CI=\{b_4,\dots,b_n\}\setminus b_I$.  We now wish to pull out the following factors:  
\begin{align*}\frac{1}{u_i!^2}& \text{ if $b_i=2u_i$, and}\\
\frac{u_i}{u_i!^2}& \text{ if $b_i=2u_i-1$}
\end{align*}to be left with only polynomial terms, of degree up to $n-3$.  By symmetry, we only need to show this for one of the $b_i$, so choose $b_1$.
\paragraph{\emph{Even}} If $b_1=2u_1$, then by induction for $|I|\neq 0,1$ both terms will look like $$\frac{u_1}{u_1!^2}p(u_1)=\frac{1}{u_1!^2}\left[u_1p(u_1)\right]$$ where $u_1p(u_1)$ is a polynomial in $u_1$ of degree $|I|$.

\paragraph{\emph{Odd}} If $b_1=2u_1-1$ then by induction for $|I|\neq 0,1$ both terms will have the form $$\frac{1}{(u_1-1)!^2}p(u_1)=\frac{u_1}{u_1!^2}\left[u_1p(u_1)\right]$$ where $u_1p(u_1)$ is a polynomial in $u_1$ of degree $|I|$.

\paragraph{\emph{Special cases}}We must be careful about the occurrences of one and two point invariants, as the inductive step begins at 3.  These will occur in the first term when $|I|=0$ or $|I|=1$, and the second term when $|I|=0$.  For the first term, application of the string equation leads to $$\langle \tau_{b_1-2}(\omega)\rangle=\begin{cases}\frac{1}{u_1!^2}&b_1=2u_1\\ 0 &b_1=2u_1-1\end{cases}$$ 
or 
\begin{align*}\langle \tau_{b_1-2}(\omega)\tau_{b_i}(\omega)\rangle^0 &+\langle \tau_{b_1-1}(\omega)\tau_{b_i-1}(\omega)\rangle^0\\
& =\begin{cases} \frac{1}{(u_1-1)!^2u_i!^2}\frac{1}{u_1+u_i}+\frac{u_1u_i}{u_1!^2u_i!^2}\frac{1}{u_1+u_i}=\frac{u_1}{u_1!^2u_i!^2} \\   \frac{(u_1-1)u_i}{(u_1-1)!^2u_i!^2}\frac{1}{u_1+u_i-1}+\frac{1}{(u_1-1)!^2(u_i-1)!^2}\frac{1}{u_1+u_i-1}=\frac{u_1u_i}{u_1!^2u_i!^2}u_1\end{cases}
\end{align*}
and we still get the correct form.  If $|I|=0$ the second term is only non zero for $b_1=2u_1-1$ and we get $$\langle \tau_0(\omega)\tau_{2u_1-2}(\omega)\rangle^0 = \frac{1}{(u_1-1)!^2}\frac{1}{u_1}=\frac{u_1}{u_1!^2}$$ which is again correct.

Since $0\leq |I|\leq n-3$, adding terms on the right hand side together gives the required degree of the polynomial part of the stationary Gromov-Witten invariant.

\subsubsection{\bf Genus one case.} {\em Initial case.} This time the induction begins from the one point function.  If we set $\alpha_1=\omega$ and $F=1$ in (\ref{fullonetoprec}) we get
\begin{align*}
\left\langle \tau_{b_1}(\omega) \right\rangle^{1}&=\left\langle \tau_0(1)\tau_{b_1-1}(\omega) \right\rangle^{0}\left\langle \tau_{0}(\omega)\right\rangle^{1}\\\nonumber &+\left\langle \tau_0(\omega)\tau_{b_1-1}(\omega) \right\rangle^{0}\left\langle \tau_{0}(1) \right\rangle^{1}
+\frac{1}{12}\left\langle \tau_0(1)\tau_0(\omega)\tau_{b_1-1}(\omega) \right\rangle^{0}.
\end{align*}
The left hand side is only non zero if $b_1=2u_1$, which makes the second term on the right hand side vanish for dimension reasons.  Using the string equation, the initial terms of the genus zero case and the value \cite{OPaGro}
$$\left\langle \tau_0(\omega)\right\rangle^1=-\frac{1}{24} $$ this reduces to
\begin{equation}\label{GWoneone}
\left\langle \tau_{2u_1}(\omega)\right\rangle^1 =-\frac{1}{24}\frac{1}{u_1!^2}+\frac{1}{12}\frac{1}{(u_1-1)!^2}\frac{1}{u_1}=\frac{1}{24u_1!^2}(2u_1-1).
\end{equation}

\subsubsection{Induction} We have proven the theorem for genus zero and suppose it is true in genus one for $n'<n$.  Let us apply $$\frac{d^{n-1}}{dt_{b_2}\dots dt_{b_{n}}}\Big|_{\bf{t=0}}$$ to (\ref{fullonetoprec}) and let $\alpha_1=\omega, F=1$ to obtain the recursion 
\begin{align}
\left\langle \tau_{b_1}(\omega)\cdots \tau_{b_{n}}(\omega) \right\rangle^1&=\sum_{I\subset\{2,\dots,n\}} \Big(\left\langle \tau_0(1)\tau_{b_1-1}(\omega)\tau_I(\omega)\right\rangle^0 \left\langle \tau_0(\omega)\tau_{CI}(\omega)\right\rangle^1\\
\nonumber &+\left\langle \tau_0(\omega)\tau_{b_1-1}(\omega)\tau_I(\omega)\right\rangle^0 \left\langle \tau_0(1)\tau_{CI}(\omega)\right\rangle^1\big)\\
\nonumber &+\frac{1}{12}\left\langle \tau_0(1)\tau_0(\omega)\tau_{b_1-1}(\omega)\tau_{b_2}(\omega)\cdots \tau_{b_n}(\omega)\right\rangle^0 
\end{align}
for $CI=\{b_2,\dots,b_n\}\setminus b_I$.  As with genus zero, we wish to pull out factors 
\begin{align*}\frac{1}{u_i!^2}& \text{ if $b_i=2u_i$, and}\\
\frac{u_i}{u_i!^2}& \text{ if $b_i=2u_i-1$}
\end{align*}and be left with only polynomial terms, of degree up to $n$.  Again by symmetry we only need to see this for one parameter, so look at $b_1$.
\paragraph{\emph{Even}} For $b_1=2u_1$ the first two terms will be $$\frac{u_1}{u_1!^2}p(u_1)=\frac{1}{u_1!^2}\left[u_1p(u_1)\right]$$ for $u_1p(u_1)$ a polynomial in $u_1$ of degree $|I|$.  The last term will look the same but this time $u_1p(u_1)$ is a polynomial in $u_1$ of degree $n$.
\paragraph{\emph{Odd}} For $b_1=2u_1-1$ the first two terms will be $$\frac{1}{(u_1-1)!^2}p(u_1)=\frac{u_1}{u_1!^2}\left[u_1p(u_1)\right]$$ for $u_1p(u_1)$ a polynomial in $u_1$ of degree $|I|$.  The last term will look the same but this time $u_1p(u_1)$ is a polynomial in $u_1$ of degree $n$.

\paragraph{\emph{Special cases}} We already saw in the genus one proof that application of the string equation to the two point genus zero invariants gave the correct form.

This gives the correct form of all genus one stationary Gromov-Witten invariants, and thus we have proven Proposition~\ref{GWquasi} for $g=0,1$.
\end{proof}
{\em Remark.}  Proposition~\ref{Mpoly} proved a polynomial form (\ref{Mprod}) for the coefficients of an expansion of the Eynard-Orantin invariants $\omega^g_n$ using the transform defined in Lemma~\ref{cortransform}.  The transform is invertible so in particular any power series with coefficients having the polynomial form (\ref{Mprod}) continues analytically to a meromorphic multidifferential on the Riemann surface double covering the plane by $x=z+1/z$.  In particular, Proposition~\ref{GWquasi} proves that the generating functions $\Omega^g_n$ continue analytically to meromorphic multidifferentials over $x=z+1/z$.  This is weaker than Theorem~\ref{th:main} which identifies $\Omega^g_n$ with a known multidifferential.

\subsection{String and dilaton equations for stationary Gromov-Witten invariants}  \label{sec:statgw}

It is easy to see that the divisor equation (\ref{GWdivisor}) restricts to a relationship between purely stationary invariants.  It is subtler that the same is true for the string equation (\ref{GWstring}) and dilaton equation (\ref{GWstring}) which tell us how to remove a non-stationary term and {\em a priori} are not statements about stationary invariants alone.

\begin{proposition}\label{evalatzero}For $g=0$ or $1$, $\tau_0(1)$ classes 
correspond to evaluation of one variable of the stationary invariant polynomial $p^g_{n,k}$ at 0.  More precisely: 
$$\left\langle \tau_0(1)\prod_{i=1}^k\tau_{2u_i}(\omega)\prod_{i=k+1}^{n-1}\tau_{2u_i-1}(\omega)\right\rangle^g = \frac{u_{k+1}...u_{n-1}}{\prod_{i=1}^{n-1} u_i!^2}p^g_{n,k}(u_1,\dots,u_{n-1},0)$$ 
where we have removed from (\ref{eq:quasi}) the factor $u_i/u_i!^2$ corresponding to an odd stationary class and set $u_i=0$.
\end{proposition}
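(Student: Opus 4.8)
The plan is to use the string equation (\ref{GWstring}) together with the polynomial form of the stationary invariants established in Proposition~\ref{GWquasi} (and its extension allowing a $\tau_0(1)$ term), and to match it against the recursions one gets on the Eynard-Orantin side. The key observation is that Theorem~\ref{th:Mgenus0}, specifically the string equation (\ref{Mstring}), tells us that inserting a $b=0$ variable into $M^g_{n+1}$ produces exactly $\sum_i b_i M^g_n(b_1,\dots,b_i-1,\dots,b_n)$, and this has the same shape as the Gromov-Witten string equation after the change of variables $b_i = 2u_i$ (even case) or $b_i = 2u_i-1$ (odd case). So the strategy is: first write out the Gromov-Witten string equation with the polynomial normalisation factored out, observe that the right-hand side is precisely a sum of shifted stationary polynomials, and then identify the resulting polynomial with $p^g_{n,k}(u_1,\dots,u_{n-1},0)$ by showing it satisfies the same recursion and normalisation that pins down that evaluation.

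Concretely, first I would apply the string equation (\ref{GWstring}) with $\alpha_i=\omega$ and use the lemma (already proved in the excerpt) that Proposition~\ref{GWquasi} extends to include a $\tau_0(1)$ term. That lemma already exhibits the left-hand side as $\frac{u_{k+1}\cdots u_{n-1}}{\prod u_i!^2}\tilde p^g_{n,k}(u_K,u_J)$ with
\[
\tilde p^g_{n,k}(u_K,u_J)=\sum_{i=1}^k u_i\,p^g_{n,k-1}(u_{K\setminus i},u_i,u_J)+\sum_{i=k+1}^{n-1}u_i\,p^g_{n,k+1}(u_K,u_i-1,u_{J\setminus i}).
\]
Next I would compare this with the polynomial $p^g_{n,k}(u_1,\dots,u_{n-1},t)$ evaluated at $t=0$. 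On the stationary side, inserting a $\tau_{2u_n}(\omega)$ with $u_n$ general and then using the string/divisor structure (or directly the Eynard-Orantin translation of Theorem~\ref{th:Mgenus0}) gives a recursion for $p^g_{n,k}$ in which the "last variable at $0$" specialisation satisfies exactly the same relation (\ref{Mstring}) that $M^g_{n+1}(\cdots,0)$ does. Since by Lemma~\ref{th:sympol} a polynomial of degree $3g-3+n$ symmetric in variables of each parity is determined by its evaluations at one variable $=a$ and another variable $=b$ (up to a constant in the top-degree case, which is fixed by Proposition~\ref{Mpoly}), matching the $t=0$ specialisation against $\tilde p^g_{n,k}$ forces $p^g_{n,k}(u_1,\dots,u_{n-1},0)=\tilde p^g_{n,k}(u_K,u_J)$, which is the claim. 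The genus-one case needs the extra term from the top-degree coefficient in Proposition~\ref{Mpoly}, exactly as in the proof of Theorem~\ref{th:Mgenus0}.

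The main obstacle I expect is bookkeeping the parity shifts carefully: when a $\tau_{2u_i-1}(\omega)$ (odd class, counted among the last $n-k$ variables) has its index dropped by one in the string equation it becomes $\tau_{2(u_i-1)}(\omega) = \tau_{2u_i-2}(\omega)$, an even class, so the number $k$ of even/odd variables changes and one must track which polynomial $p^g_{n,k\pm 1}$ appears and verify the symmetry types line up. A secondary subtlety is the genus-one constant ambiguity: when $\deg p^g_{n,k}=n-1$ (which happens precisely for $g=1$), Lemma~\ref{th:sympol} only determines the polynomial up to $\lambda\prod_{i}(u_i-\ast)$, so one must invoke the intersection-number normalisation from Proposition~\ref{Mpoly} (the coefficient $2^{1-n}\langle\tau_1^n\rangle$) to close the argument, just as in Theorem~\ref{th:Mgenus0}. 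The exceptional $(g,n)=(0,1),(0,2)$ cases should be checked directly from the explicit one- and two-point formulae computed in the initial cases of Proposition~\ref{GWquasi}.
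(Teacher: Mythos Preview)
Your proposal has a circularity problem. You want to use ``the Eynard-Orantin translation of Theorem~\ref{th:Mgenus0}'' to obtain a recursion for $p^g_{n,k}(u_1,\dots,u_{n-1},0)$, and then match it against the $\tilde p^g_{n,k}$ coming from the Gromov--Witten string equation. But Theorem~\ref{th:Mgenus0} is a statement about the Eynard--Orantin polynomials $m^g_{n,k}$, not about the Gromov--Witten polynomials $p^g_{n,k}$. The identification $m^g_{n,k}=p^g_{n,k}$ is exactly Theorem~\ref{th:main}, whose proof (via Theorem~\ref{GWgenus0}) \emph{uses} Proposition~\ref{evalatzero}. So at this stage you have no independent way to evaluate $p^g_{n,k}(u_1,\dots,u_{n-1},0)$, and Lemma~\ref{th:sympol} has nothing to bite on: you know one evaluation (the $\tilde p$ from the string equation), but you have no second input for the other side of the putative equality.

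The paper avoids this by working entirely on the Gromov--Witten side. It proves the proposition by induction on $n$ using the topological recursion relations (\ref{fullzerotoprec}) and (\ref{fullonetoprec}) directly: one writes the recursion twice, once with $\alpha_2=1$, $b_2=0$ and once with $\alpha_2=\omega$, $b_2=2u_2-1$, expands both right-hand sides via the string and divisor equations, and checks term by term (using the inductive hypothesis on strictly smaller invariants) that setting $u_2=0$ in the second expression reproduces the first. The only delicate point is that a term $\langle\tau_{2u_2-2}(\omega)\cdots\rangle$ appears, whose $u_2$-dependence is $\frac{u_2}{u_2!^2}[u_2 p(u_2)]$ and hence vanishes at $u_2=0$; this is a direct computation, not an appeal to the Eynard--Orantin side. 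Your parity bookkeeping remarks and the explicit checks of the low $(g,n)$ cases are on target, but the core inductive engine must be the GW topological recursion, not Theorem~\ref{th:Mgenus0}.
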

\begin{proof}
We will use induction on $n$ and the topological recursion (\ref{fullzerotoprec}).  

\paragraph{\bf{Genus zero}}Let us begin with the initial cases.  For dimension reasons, we need only check the following two cases whose expressions were computed in section~\ref{sec:proofofmain}.  Interpreting $u_i=0$ to mean ignore the $u_i/u_i!^2$ factor before evaluating gives

\begin{align*} \left\langle \tau_{2u_1-1}(\omega)\tau_{2u_2-1}(\omega)\right\rangle^{0}\Big|_{u_1=0}&=\frac{u_1u_2}{u_1!^2u_2!^2}\frac{1}{u_1+u_2}\Big|_{u_1=0}\\
&=\frac{1}{u_2!^2}= \left\langle \tau_{2u_2-2}(\omega)\right\rangle^{0}\\
&= \left\langle \tau_{0}(1)\tau_{2u_2-1}(\omega)\right\rangle^{0}
\end{align*}
and
\begin{align*} \langle \tau_{2u_1}(\omega)\tau_{2u_2-1}(\omega)\tau_{2u_3-1}(\omega) \rangle^{0}\Big|_{u_3=0}&= \frac{u_2u_3}{u_1!^2u_2!^2u_3!^2}\Big|_{u_3=0}\\
&=\frac{u_2}{u_1!^2u_2!^2}\\
&=\frac{u_1u_2}{u_1!^2u_2!^2}\frac{1}{u_1+u_2}+\frac{1}{u_1!^2(u_2-1)!^2}\frac{1}{u_1+u_2}\\
&= \langle \tau_{2u_1-1}(\omega)\tau_{2u_2-1}(\omega) \rangle^{0}+ \langle \tau_{2u_1}(\omega)\tau_{2u_2-2}(\omega) \rangle^{0}\\
&= \langle \tau_{2u_1}(\omega)\tau_{2u_2-1}(\omega) \tau_0(1)\rangle^{0}
\end{align*}

So that the lemma is true for the smallest cases.  Applying appropriate derivatives to (\ref{fullzerotoprec}) and setting $F=1$ gives the recursion
\begin{align}
\left\langle \tau_{b_1}(\alpha_1)\tau_{b_2}(\alpha_2) \tau_{b_3}(\alpha_3) \tau_{S}(\omega) \right\rangle^0&=\sum_{I\cup J=S} \Big(\left\langle \tau_0(1)\tau_{b_1-1}(\alpha_1)\tau_I(\omega)\right\rangle^0 \left\langle \tau_0(\omega)\tau_{b_2}(\alpha_2)\tau_{b_3}(\alpha_3)\tau_{J}(\omega)\right\rangle^0 \\ \nonumber
\nonumber &+\left\langle \tau_0(\omega)\tau_{b_1-1}(\alpha_1)\tau_I(\omega)\right\rangle^0 \left\langle \tau_0(1)\tau_{b_2}(\alpha_2)\tau_{b_3}(\alpha_3)\tau_{J}(\omega)\right\rangle^0\big)
\end{align}
for $S=\{b_4,\dots,b_n\}$.
We will show by induction that for $\alpha_1=\alpha_2=\alpha_3=\omega$ and $b_2=2u_2-1$ an odd parity variable, the LHS evaluated at $u_2=0$ is equal to the LHS if $\alpha_1=\alpha_3=\omega$, $\alpha_2=1$, $b_2=0$.  The induction will involve equating the right hand sides.
\paragraph{RHS1}Let $\alpha_1=\alpha_3=\omega$, $\alpha_2=1$, $b_2=0$.  Then after applying the divisor equation to the first term and the string equation to the second, the RHS becomes
\begin{align*}
&\sum_{I\cup J=S} \Big(\left\langle \tau_0(1)\tau_{b_1-1}(\omega)\tau_I(\omega)\right\rangle^0 \left\langle \tau_0(1)\tau_{b_3}(\omega)\tau_{J}(\omega)\right\rangle^0\left[ \frac{|J|+b_3+1}{2}\right] \\ \nonumber
\nonumber &+\left\langle \tau_0(\omega)\tau_{b_1-1}(\omega)\tau_I(\omega)\right\rangle^0\big[ \left\langle \tau_{0}(1)\tau_{b_3-1}(\omega)\tau_{J}(\omega)\right\rangle^0+ \left\langle \tau_{0}(1)\tau_{b_3}(\omega)\tau_{J-1}(\omega)\right\rangle^0 \big]\Big)
\end{align*}
where we have used the notation $$\left\langle \tau_{J-1}(\omega)\right\rangle^g=\sum_{b_j\in J}\left\langle \tau_{J\setminus b_j}(\omega)\tau_{b_j-1}(\omega)\right\rangle^g.$$

\paragraph{RHS2} Let  $\alpha_1=\alpha_2=\alpha_3=\omega$ and $b_2=2u_2-1$.  Then applying the divisor equation to the first term and the string equation to the second term, the RHS is 
\begin{align*}
&\sum_{I\cup J=S} \Big(\left\langle \tau_0(1)\tau_{b_1-1}(\omega)\tau_I(\omega)\right\rangle^0 \left\langle \tau_{2u_2-1}(\omega)\tau_{b_3}(\omega)\tau_{J}(\omega)\right\rangle^0\left[  \frac{|J|+b_3+2u_2+1}{2}\right]\\ \nonumber
\nonumber &+\left\langle \tau_0(\omega)\tau_{b_1-1}(\omega)\tau_I(\omega)\right\rangle^0\big[ \left\langle \tau_{2u_2-2}(\omega)\tau_{b_3}(\omega)\tau_{J}(\omega)\right\rangle^0+ \left\langle \tau_{2u_2-1}(\omega)\tau_{b_3-1}(\omega)\tau_{J}(\omega)\right\rangle^0\\
&+ \left\langle \tau_{2u_2-1}(\omega)\tau_{b_3}(\omega)\tau_{J-1}(\omega)\right\rangle^0 \big]\Big)
\end{align*}

By induction, the polynomial expressions for the first and final two terms are equal when we ignore the $\frac{u_2}{u_2!^2}$ factor and put $u_2=0$.  We must look closely at $\left\langle \tau_{2u_2-2}(\omega)\tau_{b_3}(\omega)\tau_{J}(\omega)\right\rangle^0$.  The $u_2$ dependence can be expressed as $$\frac{1}{(u_2-1)!^2}p(u_2)=\frac{u_2}{u_2!^2}\left[ u_2 p(u_2)\right]$$ for $u_2p(u_2)$ a polynomial.  When $u_2$ is set to zero in the polynomial component, this term will vanish and both RHS expressions are equal.

\paragraph{\bf{Genus one}} We shall proceed analogously.  For the smallest case, we may use the genus one topological recursion (\ref{fullonetoprec}), along with the initial computation in section \ref{sec:proofofmain} to find an expression for $\langle \tau_{2u_1-1}(\omega)\tau_{2u_2-1}(\omega)\rangle^1$. Let $b_1=2u_1-1$ and $\alpha_1=\omega$.  Taking a derivative to insert a $\tau_{2u_2-1}(\omega)$ term and discarding parts that are the wrong dimension gives
\begin{align*}
\langle \tau_{2u_1-1}(\omega)&\tau_{2u_2-1}(\omega)\rangle^1=\left\langle \tau_0(1)\tau_{2u_1-2}(\alpha_1)\tau_{2u_2-1}(\omega)\right\rangle^0\left\langle \tau_0(\omega)\right\rangle^1+\left\langle \tau_0(\omega)\tau_{2u_1-2}(\omega)\right\rangle^0\left\langle \tau_0(1)\tau_{2u_2-1}(\omega)\right\rangle^1\\
&\quad\quad\quad\quad\quad\quad+\frac{1}{12}\left\langle\tau_0(1)\tau_0(\omega) \tau_{2u_1-2}(\omega_1)\tau_{2u_2-1}(\omega)\right\rangle^0\\
=&-\frac{1}{24}\big(\left\langle \tau_{2u_1-3}(\alpha_1)\tau_{2u_2-1}(\omega)\right\rangle^0+\left\langle \tau_{2u_1-2}(\alpha_1)\tau_{2u_2-2}(\omega)\right\rangle^0\big)+\left\langle \tau_0(\omega)\tau_{2u_1-2}(\omega)\right\rangle^0\left\langle \tau_{2u_2-2}(\omega)\right\rangle^1\\
&+\frac{1}{12}\left(\left\langle\tau_0(\omega) \tau_{2u_1-3}(\omega_1)\tau_{2u_2-1}(\omega)\right\rangle^0+\left\langle\tau_0(\omega) \tau_{2u_1-2}(\omega_1)\tau_{2u_2-2}(\omega)\right\rangle^0\right)\\
=&-\frac{1}{24}\Big(\frac{(u_1-1)u_2}{(u_1-1)!^2u_2!^2}\frac{1}{u_1+u_2-1}+\frac{1}{(u_1-1)!^2(u_2-1)!^2}\frac{1}{u_1+u_2-1}\Big)\\
&+ \frac{1}{(u_1-1)!^2}\frac{1}{u_1}\frac{2u_2-3}{24(u_2-1)!^2}+\frac{1}{12u_1!^2u_2!^2}\left(u_1^2u_2(u_1-1)+u_1^2u_2^2\right)\\
=&\frac{u_1u_2}{24u_1!^2u_2!^2}(2u_1^2+2u_2^2+2u_1u_2-3u_1-3u_2)
\end{align*}
\normalsize so that
\begin{align*}
\left\langle \tau_{2u_1-1}(\omega)\tau_{2u_2-1}(\omega)\right\rangle^1\Big|_{u_1=0}&=\frac{u_2}{24u_2!^2}(2u_2^2-3u_2)\\
&=\frac{1}{24(u_2-1)!^2}(2u_2-3)=\left\langle \tau_{2u_2-2}(\omega)\right\rangle^1\\
&=\left\langle\tau_0(1) \tau_{2u_2-1}(\omega)\right\rangle^1
\end{align*}
and we have verified the initial case.  Applying appropriate derivatives to (\ref{fullonetoprec}) and setting $\alpha_1=\omega, F=1$ gives the recursion:
\begin{align*}
\langle \tau_{b_1}&(\omega)\tau_{b_2}(\alpha_2)\tau_S(\omega)\rangle^1=\sum_{I\cup J=S}\bigg[ \left\langle \tau_0(1)\tau_{b_1-1}(\omega)\tau_{b_2}(\alpha_2)\tau_{I}(\omega)\right\rangle^0\left\langle \tau_0(\omega)\tau_{J}(\omega)\right\rangle^1 \\
&\hspace{-8mm}+\left\langle \tau_0(\omega)\tau_{b_1-1}(\omega)\tau_{b_2}(\alpha_2)\tau_{I}(\omega)\right\rangle^0\left\langle \tau_0(1)\tau_{J}(\omega)\right\rangle^1+
\left\langle \tau_0(1)\tau_{b_1-1}(\omega)\tau_{I}(\omega)\right\rangle^0\left\langle \tau_0(\omega)\tau_{b_2}(\alpha_2)\tau_{J}(\omega)\right\rangle^1 \\
&\hspace{-2mm}+\left\langle \tau_0(\omega)\tau_{b_1-1}(\omega)\tau_{I}(\omega)\right\rangle^0\left\langle \tau_0(1)\tau_{b_2}(\alpha_2)\tau_{J}(\omega)\right\rangle^1\bigg]
+\frac{1}{12}\left\langle\tau_0(1)\tau_0(\omega) \tau_{b_1-1}(\omega)\tau_{b_2}(\alpha_2)\tau_{S}(\omega)\right\rangle^0
\end{align*}
for $S=\{b_3,\dots,b_n\}$.  Now we may compare expressions.
\paragraph{RHS1}  Let $\alpha_2=1$ and $b_2=0$.  
\begin{align*}
&\sum_{I\cup J=S}\bigg[ \left\langle \tau_0(1)\tau_{0}(1)\tau_{b_1-1}(\omega)\tau_{I}(\omega)\right\rangle^0\hspace{-1mm}\left\langle \tau_0(\omega)\tau_{J}(\omega)\right\rangle^1 +\left\langle\tau_0(\omega) \tau_{0}(1)\tau_{b_1-1}(\omega)\tau_{I}(\omega)\right\rangle^0\hspace{-1mm}\left\langle \tau_0(1)\tau_{J}(\omega)\right\rangle^1\\
&+
\left\langle \tau_0(1)\tau_{b_1-1}(\omega)\tau_{I}(\omega)\right\rangle^0\left\langle \tau_0(\omega)\tau_{0}(1)\tau_{J}(\omega)\right\rangle^1+\left\langle \tau_0(\omega)\tau_{b_1-1}(\omega)\tau_{I}(\omega)\right\rangle^0\left\langle \tau_0(1)\tau_{0}(1)\tau_{J}(\omega)\right\rangle^1\bigg] \\
&
\hspace{+84mm}+\frac{1}{12}\left\langle\tau_0(1)\tau_0(\omega) \tau_{b_1-1}(\omega)\tau_{0}(1)\tau_{S}(\omega)\right\rangle^0.
\end{align*}

\paragraph{RHS2}  Let $\alpha_2=\omega$, $b_2=2u_2-1$.
\begin{align*}
&\sum_{I\cup J=S}\bigg[\left\langle \tau_0(1)\tau_{b_1-1}(\omega)\tau_{2u_2-1}(\omega)\tau_{I}(\omega)\right\rangle^0\left\langle \tau_0(\omega)\tau_{J}(\omega)\right\rangle^1 +\hspace{-0.5mm}\left\langle \tau_0(\omega)\tau_{b_1\hspace{-0.5mm}-\hspace{-0.5mm}1}(\omega)\tau_{2u_2\hspace{-0.5mm}-\hspace{-0.5mm}1}(\omega)\tau_{I}(\omega)\right\rangle^0\hspace{-0.5mm}\left\langle \tau_0(1)\tau_{J}(\omega)\right\rangle^1\\
&\hspace{8mm}+\left\langle \tau_0(1)\tau_{b_1\hspace{-0.5mm}-\hspace{-0.5mm}1}(\omega)\tau_{I}(\omega)\right\rangle^0\hspace{-0.5mm}\left\langle \tau_0(\omega)\tau_{2u_2\hspace{-0.5mm}-\hspace{-0.5mm}1}(\omega)\tau_{J}(\omega)\right\rangle^1 \hspace{-1.5mm}+\left\langle \tau_0(\omega)\tau_{b_1\hspace{-0.5mm}-\hspace{-0.5mm}1}(\omega)\tau_{I}(\omega)\right\rangle^0\left\langle \tau_0(1)\tau_{2u_2\hspace{-0.5mm}-\hspace{-0.5mm}1}(\omega)\tau_{J}(\omega)\right\rangle^1\bigg]\\
&\hspace{+91mm}+\frac{1}{12}\left\langle\tau_0(1)\tau_0(\omega) \tau_{b_1-1}(\omega)\tau_{2u_2-1}(\omega)\tau_{S}(\omega)\right\rangle^0\hspace{-1.5mm}.
\end{align*}

By induction, setting $u_2=0$ in the polynomial expressions for all terms in RHS2 we get equality with RHS1.  (The induction is on $n$, but we have already shown all genus zero to hold.)

\end{proof}

A similar strategy is required for the dilaton equation.  

\begin{proposition}\label{evalatder}For $g=0$ or $1$, $\tau_1(1)$ classes can be evaluated in the expression (\ref{eq:quasi}) by removing the $1/u_i!^2$ factor from an even stationary class and setting $u_i=0$ in the derivative:
\begin{equation}
\left\langle\tau_1(1) \prod_{i=2}^k\tau_{2u_i}(\omega)\prod_{i=k+1}^n\tau_{2u_i-1}(\omega)\right\rangle^g=\frac{u_{k+1}\cdots u_n}{\prod_{i=2}^nu_i!^2}\frac{\partial}{\partial u_1}p^g_{n,k}(u_1,\dots,u_n)\Big|_{u_1=0}
\end{equation}
\end{proposition}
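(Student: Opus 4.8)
The plan is to follow the strategy of Proposition~\ref{evalatzero}, with the evaluation at $u_i=0$ there replaced by differentiation at $u_i=0$. By the dilaton equation (\ref{GWdilaton}) the left-hand side equals $(2g-3+n)$ times the $(n-1)$-point stationary invariant obtained by deleting the class $\tau_{2u_1}(\omega)$, so by Proposition~\ref{GWquasi} the assertion is equivalent to the polynomial identity
\begin{equation*}
\frac{\partial}{\partial u_1}\,p^g_{n,k}(u_1,u_2,\dots,u_n)\Big|_{u_1=0}=(2g-3+n)\,p^g_{n-1,k-1}(u_2,\dots,u_n).
\end{equation*}
Rather than attack this directly, I would establish the asserted formula by expanding both of its sides with the topological recursion and matching term by term, exactly as in Proposition~\ref{evalatzero}; this has the further advantage of covering the exceptional genus-one one-point case $\langle\tau_1(1)\rangle^1=\frac{1}{12}$, where (\ref{GWdilaton}) does not directly apply.

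Concretely, I would apply the genus-zero recursion (\ref{fullzerotoprec}), respectively the genus-one recursion (\ref{fullonetoprec}), after inserting the remaining stationary classes via the appropriate derivatives, in two ways. First, to $\big\langle\tau_1(1)\prod_{i=2}^k\tau_{2u_i}(\omega)\prod_{i=k+1}^n\tau_{2u_i-1}(\omega)\big\rangle^g$ with $\tau_1(1)$ as a distinguished insertion, so that it reappears in a sub-correlator as $\tau_{b_1-1}(1)=\tau_0(1)$; the string and divisor equations (\ref{GWstring})--(\ref{GWdivisor}) and Proposition~\ref{evalatzero} then turn every sub-correlator into stationary data. Second, to the stationary invariant $\big\langle\tau_{2u_1}(\omega)\prod_{i=2}^k\tau_{2u_i}(\omega)\prod_{i=k+1}^n\tau_{2u_i-1}(\omega)\big\rangle^g$ with $\tau_{2u_1}(\omega)$ distinguished; then clear the prefactor $u_1!^2$ and apply $\partial_{u_1}|_{u_1=0}$. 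In this second expansion $u_1$ occurs in exactly one sub-correlator of each term, and after using the divisor equation (which removes the $\omega\cup\omega=0$ corrections and contributes a degree factor linear in $u_1$) together with Proposition~\ref{GWquasi}, that sub-correlator has the form $(\mathrm{const}+u_1)\cdot\frac{u_1}{u_1!^2}\cdot[\,\text{prefactors}\,]\cdot[\,\text{polynomial in }u_1\,]$; clearing $u_1!^2$ and differentiating at $u_1=0$ extracts the constant term, which by Proposition~\ref{evalatzero} (setting the odd variable $u_1$ to $0$, i.e.\ replacing $\tau_{2u_1-1}(\omega)$ by $\tau_0(1)$) is precisely the matching sub-correlator of the first expansion. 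Summing over terms, the two expansions coincide. The smallest configurations --- the two- and three-point stationary invariants in genus $0$ and the one- and two-point cases in genus $1$, where the recursion is unavailable or produces unstable sub-correlators --- I would check directly from the explicit formulas of Section~\ref{sec:proofofmain}, such as $\langle\tau_{2u_1}(\omega)\tau_{2u_2}(\omega)\rangle^0$ and the genus-one expressions around (\ref{GWoneone}).

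I expect the main obstacle to be the bookkeeping of degenerate cases already encountered in Propositions~\ref{GWquasi} and~\ref{evalatzero}: keeping track of the one-point, two-point and degree-zero sub-correlators at the base of the recursion, where the stable-map spaces may be unstable or of excess dimension and the naive polynomial formulas must be corrected, and verifying in each such case that $\partial_{u_1}|_{u_1=0}$ still extracts the right constant. A term contributes nothing under this operation precisely when its $u_1$-dependence has the form $\frac{1}{(u_1-1)!^2}\,r(u_1)=\frac{1}{u_1!^2}\,u_1^2\,r(u_1)$ with $r$ a polynomial --- that is, its polynomial part is divisible by $u_1^2$, so its value and first derivative at $u_1=0$ both vanish --- and one must check that these are exactly the terms absent from the $\tau_1(1)$-expansion. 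Once the vanishing terms and the low-point corrections are accounted for, the surviving contributions reassemble into the recursion for $p^g_{n-1,k-1}(u_2,\dots,u_n)$, and the combinatorial weight collected along the way is $2g-2+(n-1)$, consistent with the dilaton equation.
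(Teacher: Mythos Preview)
Your plan is correct and follows the paper's strategy closely: expand both sides via the topological recursion, use the string and divisor equations and Proposition~\ref{evalatzero} to convert the sub-correlators to stationary data, and match term by term after applying $\partial_{u_1}|_{u_1=0}$; the base cases are checked by hand from the explicit low-point formulae, exactly as the paper does.

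There is one genuine tactical difference worth noting. You place the special insertion $\tau_1(1)$ (respectively $\tau_{2u_1}(\omega)$) in slot $b_1$ of the recursion, so it is immediately reduced to $\tau_0(1)$ (respectively $\tau_{2u_1-1}(\omega)$), and the comparison is then effected entirely through Proposition~\ref{evalatzero}. The paper instead places the special insertion in slot $b_2$ while keeping a generic stationary class in slot $b_1$; the special insertion then survives into a sub-correlator of strictly smaller $n$, and the comparison proceeds by induction on Proposition~\ref{evalatder} itself, with a single appeal to Proposition~\ref{evalatzero} for the lone odd term $\langle\tau_{2u_2-1}(\omega)\tau_{b_3}(\omega)\tau_J(\omega)\rangle^0$. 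Your route avoids the self-referential induction but pays a small price: the sub-correlators in your Term~1 look like $\langle\tau_0(1)\tau_{2u_1-1}(\omega)\tau_I(\omega)\rangle^0$, which already carry a $\tau_0(1)$, so Proposition~\ref{evalatzero} does not apply verbatim; you must first use the string equation on the existing $\tau_0(1)$ and then invoke Proposition~\ref{evalatzero} on each resulting stationary piece. Also, your description ``$(\mathrm{const}+u_1)\cdot\frac{u_1}{u_1!^2}\cdot[\text{polynomial}]$'' is only the shape of Term~2 (where the divisor equation acts on the factor containing $u_1$); Term~1 has the simpler shape $\frac{u_1}{u_1!^2}\tilde p(u_1)$, and both the product rule argument and the matching need to be written out separately for the two cases. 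Once these points are made explicit, your argument goes through.
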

\begin{proof}
We will use induction on $n$ and the topological recursions (\ref{fullzerotoprec}, \ref{fullonetoprec}).

\paragraph{\textbf{Genus zero}}
Begin with the initial cases.   Interpreting operations to mean ignore the $1/u_i!^2$ factor first gives

\begin{align*}\frac{\partial}{\partial u_2} \left\langle \tau_{2u_1}(\omega)\tau_{2u_2}(\omega)\right\rangle^{0}\Big|_{u_2=0}&=\frac{1}{u_1!^2}\frac{\partial}{\partial u_2} \frac{1}{u_1+u_2+1}\Big|_{u_2=0}\\
&=\frac{1}{u_1!^2}\frac{-1}{(u_1+1)^2}= -\left\langle \tau_{2u_1}(\omega)\right\rangle^{0}\\
&= \left\langle \tau_{1}(1)\tau_{2u_1}(\omega)\right\rangle^{0}
\end{align*}
and
\begin{align*}\frac{\partial}{\partial u_3} \langle \tau_{2u_1}(\omega)\tau_{2u_2}(\omega)\tau_{2u_3}(\omega) \rangle^{0}\Big|_{u_3=0}&= \frac{1}{u_1!^2u_2!^2}\frac{\partial}{\partial u_3}1\Big|_{u_3=0}\\
&=0\hspace{+1mm}=\langle\tau_1(1) \tau_{2u_1}(\omega)\tau_{2u_2}(\omega) \rangle^{0}\\
\end{align*}
and
\begin{align*}\frac{\partial}{\partial u_3} \langle \tau_{2u_1-1}(\omega)\tau_{2u_2-1}(\omega)\tau_{2u_3}(\omega) \rangle^{0}\Big|_{u_3=0}&= \frac{u_1u_2}{u_1!^2u_2!^2}\frac{\partial}{\partial u_3}1\Big|_{u_3=0}\\
&=0\hspace{+1mm}=\langle\tau_1(1) \tau_{2u_1-1}(\omega)\tau_{2u_2-1}(\omega) \rangle^{0}\\
\end{align*}
so that the proposition holds for the smallest cases.  Now apply appropriate derivatives to (\ref{fullzerotoprec}) to get the recursion:
\begin{align}
\left\langle \tau_{b_1}(\alpha_1)\tau_{b_2}(\alpha_2) \tau_{b_3}(\alpha_3) \tau_{S}(\omega) \right\rangle^0&=\sum_{I\cup J=S} \Big(\left\langle \tau_0(1)\tau_{b_1-1}(\alpha_1)\tau_I(\omega)\right\rangle^0 \left\langle \tau_0(\omega)\tau_{b_2}(\alpha_2)\tau_{b_3}(\alpha_3)\tau_{J}(\omega)\right\rangle^0 \\ \nonumber
\nonumber &+\left\langle \tau_0(\omega)\tau_{b_1-1}(\alpha_1)\tau_I(\omega)\right\rangle^0 \left\langle \tau_0(1)\tau_{b_2}(\alpha_2)\tau_{b_3}(\alpha_3)\tau_{J}(\omega)\right\rangle^0\big)
\end{align}
for $S=\{b_4,\dots,b_n\}$.  We will show by induction that when $\alpha_1=\alpha_2=\alpha_3=\omega$ and $b_2=2u_2$ an even parity variable, if we ignore the $1/u_2!^2$ factor, take the derivative and set $u_2=0$, the LHS is the same as the LHS when $\alpha_1=\alpha_3=\omega$, $\alpha_2=1$ and $b_2=1$.

\paragraph{RHS1}Let $\alpha_1=\alpha_3=\omega$, $\alpha_2=1$, $b_2=1$.  Then after applying the divisor equation to the first term and the string equation to the second, the RHS becomes
\begin{align*}
&\sum_{I\cup J=S} \Big(\left\langle \tau_0(1)\tau_{b_1-1}(\omega)\tau_I(\omega)\right\rangle^0 \big[\left\langle \tau_0(\omega)\tau_{b_3}(\omega)\tau_{J}(\omega)\right\rangle^0+\left\langle \tau_1(1)\tau_{b_3}(\omega)\tau_{J}(\omega)\right\rangle^0\left[ \frac{|J|+b_3+2}{2}\right] \big]\\ \nonumber
\nonumber &+\left\langle \tau_0(\omega)\tau_{b_1-1}(\omega)\tau_I(\omega)\right\rangle^0\big[ \left\langle \tau_{0}(1)\tau_{b_3}(\omega)\tau_{J}(\omega)\right\rangle^0+\left\langle \tau_{1}(1)\tau_{b_3-1}(\omega)\tau_{J}(\omega)\right\rangle^0+ \left\langle \tau_{1}(1)\tau_{b_3}(\omega)\tau_{J-1}(\omega)\right\rangle^0 \big]\Big)
\end{align*}

\paragraph{RHS2} Let  $\alpha_1=\alpha_2=\alpha_3=\omega$, $b_2=2u_2$.  After applying the divisor equation to the first term and the string equation to the second term, the RHS becomes
\begin{align*}
&\sum_{I\cup J=S} \Big(\left\langle \tau_0(1)\tau_{b_1-1}(\omega)\tau_I(\omega)\right\rangle^0 \left\langle \tau_{2u_2}(\omega)\tau_{b_3}(\omega)\tau_{J}(\omega)\right\rangle^0\left[ \frac{|J|+b_3+2u_2+2}{2}\right] \\ \nonumber
\nonumber &+\left\langle \tau_0(\omega)\tau_{b_1-1}(\omega)\tau_I(\omega)\right\rangle^0\big[ \left\langle \tau_{2u_2-1}(\omega)\tau_{b_3}(\omega)\tau_{J}(\omega)\right\rangle^0+ \left\langle \tau_{2u_2}(\omega)\tau_{b_3-1}(\omega)\tau_{J}(\omega)\right\rangle^0\\&+\left\langle \tau_{2u_2}(\omega)\tau_{b_3}(\omega)\tau_{J-1}(\omega)\right\rangle^0\big]\Big)
\end{align*}
By induction, pulling out $\frac{1}{u_2!^2}$, taking the derivative and setting $u_2=0$ gives equality with the last two terms of each RHS.  For the first term, the product rule on $u_2$ and induction give equality with the first two terms, and all that remains is to check the third term.  We may write the $u_2$ dependence in  $ \left\langle \tau_{2u_2-1}(\omega)\tau_{b_3}(\omega)\tau_{J}(\omega)\right\rangle^0$ as $$\frac{u_2}{u_2!^2}p(u_2)=\frac{1}{u_2!^2}\left[u_2p(u_2)\right]$$ so that when the product rule is used, and $u_2$ subsequently set to zero, this is equivalent to ignoring a $u_2/u_2!^2$ factor and setting $u_2=0$ in the polynomial part.  That is, by Proposition~\ref{evalatzero}, $\left\langle \tau_{0}(1)\tau_{b_3}(\omega)\tau_{J}(\omega)\right\rangle^0$.  Performing these evaluations gives an overall equality.

\paragraph{\textbf{Genus one}} 
Begin with the initial case.   Again interpreting $u_i=0$ to mean ignore the $1/u_i!^2$ factor before performing any operations gives
\begin{align*}
\frac{\partial}{\partial u_2} \left\langle \tau_{2u_1}(\omega)\tau_{2u_2}(\omega)\right\rangle^1\Big|_{u_2=0}&=\frac{1}{u_1!^2}\frac{\partial}{\partial u_2}\frac{1}{24}(2u_1^2+2u_2^2+2u_1u_2-u_1-u_2)\Big|_{u_2=0}\\
&=\frac{1}{24u_1!^2}(2u_1-1)=\left\langle\tau_1(1)\tau_{2u_1}(\omega)\right\rangle^1
\end{align*}
and the proposition holds.  Applying appropriate derivatives to (\ref{fullonetoprec}) and setting $\alpha_1=\omega, F=1$ gives the recursion:

\begin{align*}
\langle \tau_{b_1}&(\omega)\tau_{b_2}(\alpha_2)\tau_S(\omega)\rangle^1=\sum_{I\cup J=S}\bigg[ \left\langle \tau_0(1)\tau_{b_1-1}(\omega)\tau_{b_2}(\alpha_2)\tau_{I}(\omega)\right\rangle^0\left\langle \tau_0(\omega)\tau_{J}(\omega)\right\rangle^1 \\
&\hspace{-8mm}+\left\langle \tau_0(\omega)\tau_{b_1-1}(\omega)\tau_{b_2}(\alpha_2)\tau_{I}(\omega)\right\rangle^0\left\langle \tau_0(1)\tau_{J}(\omega)\right\rangle^1+
\left\langle \tau_0(1)\tau_{b_1-1}(\omega)\tau_{I}(\omega)\right\rangle^0\left\langle \tau_0(\omega)\tau_{b_2}(\alpha_2)\tau_{J}(\omega)\right\rangle^1 \\
&\hspace{-2mm}+\left\langle \tau_0(\omega)\tau_{b_1-1}(\omega)\tau_{I}(\omega)\right\rangle^0\left\langle \tau_0(1)\tau_{b_2}(\alpha_2)\tau_{J}(\omega)\right\rangle^1\bigg]
+\frac{1}{12}\left\langle\tau_0(1)\tau_0(\omega) \tau_{b_1-1}(\omega)\tau_{b_2}(\alpha_2)\tau_{S}(\omega)\right\rangle^0
\end{align*}
for $S=\{b_3,\dots,b_n\}$.  Now we may compare expressions.

\paragraph{RHS1} Let $\alpha_2=1$ and $b_2=1$.  
\begin{align*}
&\sum_{I\cup J=S}\bigg[ \left\langle \tau_0(1)\tau_{1}(1)\tau_{b_1-1}(\omega)\tau_{I}(\omega)\right\rangle^0\left\langle \tau_0(\omega)\tau_{J}(\omega)\right\rangle^1+\left\langle\tau_0(\omega) \tau_{1}(1)\tau_{b_1-1}(\omega)\tau_{I}(\omega)\right\rangle^0\left\langle \tau_0(1)\tau_{J}(\omega)\right\rangle^1\\
&\hspace{+4mm}+\left\langle \tau_0(1)\tau_{b_1-1}(\omega)\tau_{I}(\omega)\right\rangle^0\left\langle \tau_0(\omega)\tau_{1}(1)\tau_{J}(\omega)\right\rangle^1+\left\langle \tau_0(\omega)\tau_{b_1-1}(\omega)\tau_{I}(\omega)\right\rangle^0\left\langle \tau_0(1)\tau_{1}(1)\tau_{J}(\omega)\right\rangle^1\bigg] \\
&
\hspace{+84mm}+\frac{1}{12}\left\langle\tau_0(1)\tau_0(\omega) \tau_{b_1-1}(\omega)\tau_{1}(1)\tau_{S}(\omega)\right\rangle^0.
\end{align*}

\paragraph{RHS2} Let $\alpha_2=\omega$, $b_2=2u_2$.
\begin{align*}
&\sum_{I\cup J=S}\bigg[\left\langle \tau_0(1)\tau_{b_1-1}(\omega)\tau_{2u_2}(\omega)\tau_{I}(\omega)\right\rangle^0\left\langle \tau_0(\omega)\tau_{J}(\omega)\right\rangle^1 +\left\langle \tau_0(\omega)\tau_{b_1-1}(\omega)\tau_{2u_2}(\omega)\tau_{I}(\omega)\right\rangle^0\hspace{-0.5mm}\left\langle \tau_0(1)\tau_{J}(\omega)\right\rangle^1\\
&\hspace{+6mm}+\left\langle \tau_0(1)\tau_{b_1-1}(\omega)\tau_{I}(\omega)\right\rangle^0\hspace{-0.5mm}\left\langle \tau_0(\omega)\tau_{2u_2}(\omega)\tau_{J}(\omega)\right\rangle^1 +\left\langle \tau_0(\omega)\tau_{b_1-1}(\omega)\tau_{I}(\omega)\right\rangle^0\left\langle \tau_0(1)\tau_{2u_2}(\omega)\tau_{J}(\omega)\right\rangle^1\bigg]\\
&\hspace{+88mm}+\frac{1}{12}\left\langle\tau_0(1)\tau_0(\omega) \tau_{b_1-1}(\omega)\tau_{2u_2}(\omega)\tau_{S}(\omega)\right\rangle^0\hspace{-1.5mm}.
\end{align*}
Given the proposition is true in genus zero, we need only consider terms three and four. By induction on $n$, when we ignore the $1/u_2!^2$ factor, take the derivative and evaluate at $u_2=0$ the polynomial expressions for RHS2, we get equality with RHS1.

\end{proof}
{\em Remarks.}  1. Combining Proposition~\ref{evalatzero}, respectively Proposition~\ref{evalatder}, with the string equation, respectively the dilaton equation, gives relations between stationary invariants alone.  One might call these string and dilaton equations for stationary invariants.

2. We expect Propositions~\ref{GWquasi}, \ref{evalatzero} and \ref{evalatder} to hold for the Gromov-Witten invariants of $\bp^1$ for all genus $g$.\\

\begin{theorem}\label{GWgenus0}The divisor and string equations uniquely determine all genus zero and one stationary Gromov-Witten invariants.
\end{theorem}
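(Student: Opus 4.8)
The plan is to run, on the Gromov--Witten side, the argument that proves Theorem~\ref{th:Mgenus0} on the Eynard--Orantin side; the tools are the polynomial form of the invariants from Proposition~\ref{GWquasi}, the dictionary for $\tau_0(1)$ and $\tau_1(1)$ insertions supplied by Propositions~\ref{evalatzero} and \ref{evalatder}, and the interpolation Lemma~\ref{th:sympol}. First I would note that the divisor equation (\ref{GWdivisor}), applied to a purely stationary invariant, degenerates: since $\omega\cup\omega=0$ in $H^\ast(\bp^1)$ every correction term drops, leaving $\langle\tau_0(\omega)\prod_i\tau_{b_i}(\omega)\rangle^g_d=d\,\langle\prod_i\tau_{b_i}(\omega)\rangle^g_d$. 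In the notation (\ref{eq:quasi1}) the insertion $\tau_0(\omega)=\tau_{2\cdot 0}(\omega)$ amounts to setting one even variable to zero, and since $2d=2-2g+\sum_i b_i$ is affine-linear in the remaining $u_i$, the divisor equation expresses $p^g_{n+1,k+1}$ restricted to $\{u_j=0\}$ for an even variable as a known multiple of $p^g_{n,k}$.

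Next, the string equation (\ref{GWstring}) with all $\alpha_i=\omega$ reads $\langle\tau_0(1)\prod_i\tau_{b_i}(\omega)\rangle^g=\sum_i\langle\tau_{b_i-1}(\omega)\prod_{j\neq i}\tau_{b_j}(\omega)\rangle^g$, a sum of smaller stationary invariants on the right; by Proposition~\ref{evalatzero} its left side is $p^g_{n+1,k}$ with one odd variable set to zero (after stripping the $u/u!^2$ factor). So the string equation expresses $p^g_{n+1,k}$ restricted to $\{u_j=0\}$ for an odd variable in terms of known smaller polynomials. For $1\le k'\le n$ the divisor and string equations together pin down $p^g_{n+1,k'}$ at a vanishing variable of each parity; for $k'=n+1$ (all even) the divisor equation alone applies and for $k'=0$ (all odd) the string equation alone, just as in Theorem~\ref{th:Mgenus0}. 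Lemma~\ref{th:sympol} with $a=b=0$ now closes the genus-zero case: $\deg p^0_{n,k}=n-3<n$, so starting from the one-, two- and three-point functions already computed in Section~\ref{sec:proofofmain}, induction on $n$ determines every $p^0_{n,k}$. For genus one $\deg p^1_{n,k}=n$, so the divisor and string equations determine $p^1_{n,k}$ only up to an unknown multiple of $u_1\cdots u_n$.

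The remaining task --- which I expect to be the main obstacle --- is the genus-one leading coefficient $\lambda_{n,k}$ of $u_1\cdots u_n$ in $p^1_{n,k}$, which no ``evaluation at a vanishing variable'' can detect. Here I would bring in the dilaton equation (\ref{GWdilaton}): by Proposition~\ref{evalatder} the insertion of $\tau_1(1)$ corresponds to $\partial_u p^1_{n,k}\big|_{u=0}$ on an even variable, and comparing the coefficient of $u_2\cdots u_n$ on the two sides of the dilaton equation yields the recursion $\lambda_{n,k}=(n-1)\lambda_{n-1,k-1}$ for $k\ge1$. Iterating this reduces the unknowns to $\lambda_{1,1}=\tfrac1{12}$, read off from the genus-one one-point function (\ref{GWoneone}), together with the leading coefficients $\lambda_{m,0}$ of the purely odd polynomials $p^1_{m,0}$ for $m$ even; these last are exactly the ``top-degree terms'' that Theorem~\ref{th:Mgenus0} defers to Proposition~\ref{Mpoly}, and I would supply them on the Gromov--Witten side from the genus-one $\psi$-intersection numbers $2\int_{\overline{\modm}_{1,m}}\psi_1\cdots\psi_m=(m-1)!/12$ (equivalently, from the genus-one topological recursion (\ref{fullonetoprec}) applied to the purely odd invariants), after which $\lambda_{n,k}=(n-1)!/12$ for all $k$ with $n\equiv k\pmod 2$; note that $\lambda_{2,0}=\tfrac1{12}$ is already visible in the two-point function computed inside the proof of Proposition~\ref{evalatzero}. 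One bookkeeping remark to keep in mind: a dimension count forces $p^1_{n,k}$ to vanish unless $n\equiv k\pmod 2$, so it genuinely has degree $n$ --- and the Lemma~\ref{th:sympol} ambiguity is genuinely present --- only in that case.
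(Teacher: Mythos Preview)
Your genus-zero argument is exactly the paper's: both use the divisor equation for even-variable evaluation at $0$, Proposition~\ref{evalatzero} combined with the string equation for odd-variable evaluation at $0$, and Lemma~\ref{th:sympol} to close the induction.

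For genus one the two routes diverge. You invoke the dilaton equation through Proposition~\ref{evalatder} to obtain the recursion $\lambda_{n,k}=(n-1)\lambda_{n-1,k-1}$ on the ambiguous leading coefficient, reducing the problem to the purely odd seeds $\lambda_{m,0}$ and $\lambda_{1,1}$. The paper does \emph{not} use dilaton here at all: it computes $\lambda_{n,k}$ directly from the genus-one topological recursion (\ref{fullonetoprec}), identifies the degree-$n$ contribution $\tfrac{1}{12}\langle\tau_0(1)\tau_0(\omega)\tau_{b_1-1}(\omega)\cdots\rangle^0$ with $\tfrac{1}{12}\,u_1\,p^0_{n+2,k}(0,u_1,\ldots,u_n,0)$ via Proposition~\ref{evalatzero} and the divisor equation, and then --- crucially --- uses the already-proven genus-zero case of Theorem~\ref{th:main} to replace $p^0_{n+2,k}$ by $m^0_{n+2,k}$ and read off its top coefficient from Proposition~\ref{Mpoly}. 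Your route is cleaner in that it isolates exactly where the extra input beyond string and divisor is needed; the paper's route avoids introducing dilaton but leans on the genus-zero Eynard--Orantin comparison.

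One caution: your proposal to supply $\lambda_{m,0}$ ``from the genus-one $\psi$-intersection numbers $2\int_{\overline{\modm}_{1,m}}\psi_1\cdots\psi_m$'' is circular as stated, because the identification (\ref{eq:gwcoeff}) of top coefficients of $p^g_{n,k}$ with $\psi$-integrals is Theorem~\ref{th:GWquasi}, which is deduced \emph{from} Theorem~\ref{th:main} and hence from the very result you are proving. Your parenthetical alternative --- extracting $\lambda_{m,0}$ directly from the topological recursion (\ref{fullonetoprec}) applied to purely odd invariants --- is exactly what the paper does (for all $k$, not just $k=0$), and you would need to carry out that computation, including the passage through the genus-zero identification $p^0=m^0$ and Proposition~\ref{Mpoly}, to complete your argument.
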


\begin{proof}
We begin with the genus zero case and use the $g=0$ form of Proposition~\ref{GWquasi}:
\begin{equation*}
\left\langle \prod_{i=1}^k\tau_{2u_i}(\omega)\prod_{i=k+1}^n\tau_{2u_i-1}(\omega)\right\rangle^0=\frac{u_{k+1}\cdots u_n}{\prod_{i=1}^n u_i!^2}p^0_{n,k}(u_1,\dots,u_n)
\end{equation*}
where $p^0_{n,k}(u_1,\dots,u_n)$ is a polynomial of degree $n-3$ in the $u_i$'s, symmetric in the first $k$ and the last $n-k$ variables.  

The divisor equation enables one to compute $p^g_{n,k}(0,u_2,\dots,u_n)$ from $p^g_{n-1,k-1}(u_2,\dots,u_n)$.  By symmetry, this equates evaluation of any of the first $k$ variables  at 0 to known functions.  Proposition~\ref{evalatzero} and the string equation enable one to compute $p^g_{n,k}(u_1,\dots,u_{n-1},0)$ from $p^g_{n-1,k}(u_1,\dots,u_{n-1})$ which by symmetry gives evaluation of any of the last $n-k$ variables.  Thus we can apply Lemma~\ref{th:sympol} to deduce the genus zero case.

The genus one case relies on the $g=0,1$ version of Proposition~\ref{GWquasi} and the $g=0$ version of Theorem~\ref{th:main} which requires only the $g=0$ of Theorem~\ref{GWgenus0} proven above.  Proposition~\ref{evalatzero} and Lemma~\ref{th:sympol} prove that the string and divisor equations determine the $p^1_{n,k}$'s up to a constant.  (This time $H$ has degree $n$ so that $\tilde{H}$ is a constant.)  The constant is the coefficient of $u_1\cdots u_n$.  We use the genus one topological recursion and theorem \ref{th:main} for genus zero to determine this coefficient.  Having taken the appropriate derivatives the recursion (\ref{fullonetoprec}) becomes
\begin{align}
\left\langle \tau_{b_1}(\omega)\cdots \tau_{b_{n}}(\omega) \right\rangle^1&=\sum_{I\subset\{2,\dots,n\}} \Big(\left\langle \tau_0(1)\tau_{b_1-1}(\omega)\tau_I(\omega)\right\rangle^0 \left\langle \tau_0(\omega)\tau_{CI}(\omega)\right\rangle^1\\
\nonumber &+\left\langle \tau_0(\omega)\tau_{b_1-1}(\omega)\tau_I(\omega)\right\rangle^0 \left\langle \tau_0(1)\tau_{CI}(\omega)\right\rangle^1\big)\\
\nonumber &+\frac{1}{12}\left\langle \tau_0(1)\tau_0(\omega)\tau_{b_1-1}(\omega)\tau_{b_2}(\omega)\cdots \tau_{b_n}(\omega)\right\rangle^0 
\end{align}
for $CI=\{b_2,\dots,b_n\}\setminus b_I$.
For each $i=1,\dots,n$ let $b_i=2u_i$ or $2u_i-1$ depending on parity.  Proposition~\ref{GWquasi} shows that removing the appropriate binomial coefficients\footnote{We must be careful of the parity of the $b_1$ terms, along with the special cases $I=\phi, \{2,\dots , n\}$ as in section \ref{sec:gwpol}.} leaves the first two terms as polynomials in the $u_i$'s of degree $n-1$.  The third term leaves a polynomial of degree $n$, which by Proposition~\ref{evalatzero} is:
$$u_1p^0_{n+2,k}(0,u_1,\dots,u_n,0)$$ where we have evaluated one of the first $k$ and one of the last $n-k$ variables at zero.  

Note that we write the first $k$ variables corresponding to the even parity $b_i$'s and so as written above we have assumed $b_1$ to be even.  Shuffling the parameters in $p^0_{n+2,k}$ will give the argument for $b_1$ odd.

The monomial $u_2\cdots u_n$ appears in $p^0_{n+2,k}(0,u_1,\dots,u_n,0)$ as a top degree term and using the genus zero equality with the Eynard-Orantin expansion (\ref{g0mp}): $$p^0_{n+2,k}(0,u_1,\dots,u_n,0)=m^0_{n+2,k}(1,2u_1+1,\dots,2u_n,0),$$ Proposition~\ref{Mpoly} computes this coefficient to be $$\frac{2}{2^{n}}\left\langle \tau_1^{n-1}\tau_0^3\right\rangle 2^{n-1}=(n-1)!$$ The extra factors of 2 come from the change of variables $b=2u$ or $2u+1$.  Thus the coefficient of $u_1\cdots u_n$ in $p^1_{n,k}(u_1,\dots,u_n)$ is $$\frac{1}{12}(n-1)!$$
\end{proof}

We are finally in a position to prove Theorem~\ref{th:main}.
\begin{proof}[Proof of Theorem~\ref{th:main}.]

{\em Genus zero.} Begin with the divisor (\ref{Mdivisor}) and string (\ref{Mstring}) equations for the $M^g_n$'s.  If we divide by $\prod_{i=1}^n b_1!$ and shift the arguments by one, we obtain the form of the string and divisor equations ((\ref{GWstring}), (\ref{GWdivisor})) for the stationary Gromov-Witten invariants.  Recall that the string equation for the stationary Gromov-Witten invariants involves evaluation at -1 by combining Proposition~\ref{evalatzero} with the string equation.

Since both invariants are uniquely determined by these same equations (Theorems \ref{GWgenus0} and \ref{th:Mgenus0}), all we need to do is check that the initial cases match.  We can explicitly calculate the $(g,n)=(0,3)$ case: using the topological recursion for Gromov-Witten invariants we have already seen (\ref{03GW}):
\begin{align*}
\langle \tau_{2u_1}(\omega)\tau_{2u_2}(\omega)\tau_{2u_3}(\omega) \rangle^{0} &= \frac{1}{u_1!^2u_2!^2u_3!^2}\\
 \langle \tau_{2u_1}(\omega)\tau_{2u_2-1}(\omega)\tau_{2u_3-1}(\omega) \rangle^{0}&= \frac{u_2u_3}{u_1!^2u_2!^2u_3!^2}.
\end{align*}
Using \cite{NScPol}, we can compute the expansion of the $(g,n)=(0,3)$ Eynard-Orantin invariants for the curve (\ref{eq:lnz}):
\begin{align*}
M^0_3(2u_1+1,2u_2+1,2u_3+1) &=\prod_{i=1}^3(2u_i+1) \binom{2u_i}{u_i}=\prod_{i=1}^3(2u_i+1)!\langle \tau_{2u_1}(\omega)\tau_{2u_2}(\omega)\tau_{2u_3}(\omega) \rangle^{0}\\
M^0_3(2u_1+1,2u_2,2u_3)&= (2u_1+1)u_2u_3\prod_{i=1}^3\binom{2u_i}{u_i}\\&=(2u_1+1)!(2u_2)!(2u_3)!\langle \tau_{2u_1}(\omega)\tau_{2u_2-1}(\omega)\tau_{2u_3-1}(\omega) \rangle^{0}
\end{align*}
and so the theorem is true in genus zero and we have the equality 
\begin{equation}\label{g0mp}
p^0_{n,k}(u_1,\dots,u_n)=m^0_{n,k}(2u_1+1,\dots,2u_k+1,2u_{k+1},\dots,2u_n).\end{equation}
The 1-point and 2-point cases given in (\ref{eq:excep})

{\em Genus one.} This time both sets of invariants are determined by the string and divisor equations and the coefficient of the top degree polynomial terms.  We must check that the initial cases and that the top coefficients agree.  We already saw the initial Gromov-Witten invariant (\ref{GWoneone}):
\begin{equation*}
\left\langle \tau_{2u}(\omega)\right\rangle^1 =\frac{1}{24u!^2}(2u-1).
\end{equation*}
Using \cite{NScPol} and lemma \ref{cortransform} we can compute the expansion of the $(g,n)=(1,1)$ Eynard-Orantin invariant:
\begin{equation*}
M^1_1(2u_1+1)=(2u_1+1)\binom{2u_1}{u_1}\frac{1}{24}(2u_1-1)=(2u_1+1)!\left\langle \tau_{2u_1}(\omega)\right\rangle^1
\end{equation*}
which are the required Gromov-Witten invariants, so the initial cases hold.  Furthermore, the coefficient of $u_1\cdots u_n$ in $m^1_{n,k}(2u_1+1,\dots 2u_k+1,2u_{k+1},\dots,2u_n)$ is $\frac{1}{12}(n-1)!$ by Proposition~\ref{Mpoly}.  Thus the theorem is true in genus one.
\end{proof}
{\em Remark.} 
The identification of the coefficients $M^g_n$ in the expansion of $\omega^g_n$ around $x_i=\infty$ with Gromov-Witten invariants raises the question of finding a similar geometric interpretation of $N^g_n$ %(= coefficients in the expansion of $\omega^g_n$ in the coordinates $z_i$) 
which is related to $M^g_n$ via Lemma~\ref{ntom}.  The $N^g_n$ are much simpler and contain the essential information of the $M^g_n$ and hence the Gromov-Witten invariants.

A corollary of Theorem~\ref{th:main} is Theorem~\ref{th:GWquasi}.
\begin{proof}[Proof of Theorem~\ref{th:GWquasi}]
For $g=0,1$, Theorem~\ref{th:main} allows us to identify 
\[M^g_{n,k}=\prod_{i=1}^n(b_i+1)!\left\langle \prod_{i=1}^k\tau_{2u_i}(\omega)\prod_{i=k+1}^n\tau_{2u_i-1}(\omega)\right\rangle^g\] 
under the substitution $b_i=2u_i(+1)$, and hence their polynomial parts 
$m^g_{n,k}(b_1,...,b_n)=p^g_{n,k}(u_1,...,u_n)$
defined in Propositions~\ref{Mpoly} and \ref{GWquasi}.

Proposition~\ref{Mpoly} gives the coefficient of $b_1^{\beta_1}...b_n^{\beta_n}=b^{\beta}$ in $m^g_{n,k}$ as $v_{\beta}=0$ or $v_{\beta}= 2^{-2g+3-n}\int_{\overline{\modm}_{g,n}}\psi_1^{\beta_1}...\psi_n^{\beta_n}$ since $y'(1)=1$ and $y'(-1)=-1$ in (\ref{Mcoeff}).
 
Hence the top coefficients $c_{\beta}$ of $u_1^{\beta_1}\cdots u_n^{\beta_n}$ which satisfy $c_{\beta}=v_{\beta}\cdot 2^{3g-3+n}$ are given by 
\begin{equation}
c_{\beta}=2^g\int_{\overline{\modm}_{g,n}}\psi_1^{\beta_1}...\psi_n^{\beta_n}\end{equation}
for $|\beta|=3g-3+n$. 
\end{proof}

\section{Virasoro constraints}  \label{sec:vir}

The Gromov-Witten invariants of $\bp^1$ satisfy the following recursions for each $k>0$  known as Virasoro constraints:
\begin{align*}
(k+&1)!\langle[\tau_{k+1}(1)+2c_{k+1}\tau_k(\omega)]\tau_{b_S}(\omega)\rangle^g
-\sum_{j=1}^n\frac{(k+b_j+1)!}{b_j!}\langle\tau_{k+b_j}(\omega)\tau_{b_1}(\omega)..\widehat{\tau}_{b_j}(\omega)..\tau_{b_n}(\omega)\rangle^g\\
&= \sum_{m=0}^{k-2}(m+1)!(k-m-1)!\biggr[\langle\tau_{m}(\omega)\tau_{k-m-2}(\omega)\tau_{b_S}(\omega)\rangle^{g-1}+\hspace{-5mm}\displaystyle\sum_{\begin{array}{c}_{g_1+g_2=g}\\_{I\sqcup J=S}\end{array}}\hspace{-5mm}
\langle\tau_m(\omega)\tau_{b_I}(\omega)\rangle^{g_1} \langle\tau_{k-m-2}(\omega)\tau_{b_J}(\omega)\rangle^{g_2}\biggr]
\end{align*}
for $c_k=1+1/2+...+1/k$ and $\tau_{b_K}(\omega)=\prod_{j\in K} \tau_{b_j}(\omega)$.

In terms of the generating functions, the Virasoro constraints become:
 
\begin{equation}  \label{eq:virgen}
\eta^g_{n+1}(x,x_S)=\Omega^{g-1}_{n+2}(x,x,x_S)+\hspace{-5mm}\displaystyle\sum_{\begin{array}{c}_{g_1+g_2=g}\\_{I\sqcup J=S}\end{array}}\hspace{-5mm}
\Omega^{g_1}_{|I|+1}(x,x_I)\Omega^{g_2}_{|J|+1}(x,x_J)-\sum_{i=1}^n\frac{dxdx_i}{(x-x_i)^2}\Omega^g_n(x,x_{S\backslash i})
\end{equation}
where non-stationary invariants are stored in the generating function
\begin{align*}
\eta^g_{n+1}(x,x_S)dx^2:=\sum_{b_i\geq 0}\prod_{j=1}^n(b_j+1)!x_j^{-b_j-2}dx_jdx^2\bigg[\sum_{k=0}^{\infty}& (k+1)!x^{-k-2}
\langle[\tau_{k+1}(1)+2c_{k+1}\tau_{k}(\omega)]\tau_{b_S}(\omega)\rangle^g\\
& -\sum_{j=1}^n\frac{1}{b_j!}\sum_{k=0}^{b_j-1}x^{b_j-k-2}\langle\tau_k(\omega)\tau_{b_1}(\omega)..\widehat{\tau}_{b_j}(\omega)..\tau_{b_n}(\omega)\rangle^g\bigg]
\end{align*}
%-\sum_{j=1}^n\frac{(b_0+b_j+1)!}{b_j!(b_0+1)!}\langle\tau_{b_0+b_j}(\omega)\tau_{b_1}(\omega)..\widehat{\tau}_{b_j}(\omega)..\tau_{b_n}(\omega)\rangle^g

A consequence of (\ref{eq:EOrec}) is the following set of {\em loop equations}, also known as Virasoro constraints, satisfied by the Eynard-Orantin invariants \cite{EOrTop}.  The loop equations express the fact that the sum over the fibres of $x$ of a combination of the Eynard-Orantin invariants cancels the poles at the branch points of $x$.  Explicitly, the following function $P^g_{n+1}(x,z_S)$ has no poles at the branch points of $x$:
\begin{align}  \label{eq:loop}
P^g_{n+1}(x,z_S)dx(z)^2&=\sum_{x(z)=x}\biggr[\omega^{g-1}_{n+2}(z,z,z_S)+\hspace{-5mm}\displaystyle\sum_{\begin{array}{c}_{g_1+g_2=g}\\_{I\sqcup J=S}\end{array}}\hspace{-5mm}
\omega^{g_1}_{|I|+1}(z,z_I)\omega^{g_2}_{|J|+1}(z,z_J)\bigg]
\end{align}
or equivalently the right hand side vanishes to order two at each branch point of $x$.  The sum of differentials over fibres of $x$ is to be understood via a common trivialisation of the cotangent bundle supplied by $dx$.  The statement of the loop equations is unchanged if we replace $y(z)$ by $y_N(z)$ for $N\geq 6g-4+2n$.  This is because each $\omega^{g'}_{n'}$ in the equation stabilises in this range, except for $\omega^0_1(z)$.  If $y_N(z)\mapsto y_N(z)+a(1-z^2)^{N+1}$ then $\omega^0_1(z)\omega^g_{n+1}(z,z_S)\mapsto \omega^0_1(z)\omega^g_{n+1}(z,z_S)+(1-z^2)^2h(z)$ for $h$ analytic at $z=\pm1 $ since $a(1-z^2)^{N+1}$ cancels the poles of $\omega^g_{n+1}$.  Hence $P^g_{n+1}(x,z_S)dx(z)^2\mapsto P^g_{n+1}(x,z_S)dx(z)^2+z^2h(z)dx(z)^2$ which still has no poles at $z=\pm 1$.  The proof of (\ref{eq:loop}) uses the fact that the recursion (\ref{eq:EOrec}) is retrieved from
\[0=\sum_{\alpha}\hspace{-2mm}\res{z=\alpha}K(z_0,z)\cdot P^g_{n+1}(x,z_S)dx(z)^2\]
together with the identity $\sum_{x(z)=x}\omega^g_n(z,z_S)=0$ (which has the effect of converting some $z$ to $\hat{z}$.)

For $x=z+1/z$, the involution that swaps branches is given by $\hat{z}=1/z$ and 
\[ \omega^g_n(1/z,z_1)=-\omega^g_n(z,z_1)+
\delta_{g,0}\delta_{n,2}\Omega^0_2(x,x_1).\]  
In particular 
\begin{align*}
P^g_{n+1}(x,z_S)dx(z)^2&=2\biggr[\omega^{g-1}_{n+2}(z,z,z_S)+\hspace{-5mm}\displaystyle\sum_{\begin{array}{c}_{g_1+g_2=g}\\_{I\sqcup J=S}\end{array}}\hspace{-5mm}\omega^{g_1}_{|I|+1}(z,z_I)\omega^{g_2}_{|J|+1}(z,z_J)-\sum_{i=1}^n\frac{dxdx_i}{(x-x_i)^2}\omega^g_n(z,z_{S\backslash i})\bigg]\\
&=2\biggr[\Omega^{g-1}_{n+2}(x,x,x_S)+\hspace{-5mm}\displaystyle\sum_{\begin{array}{c}_{g_1+g_2=g}\\_{I\sqcup J=S}\end{array}}\hspace{-5mm}\Omega^{g_1}_{|I|+1}(x,x_I)\Omega^{g_2}_{|J|+1}(x,x_J)+\sum_{i=1}^n\frac{dxdx_i}{(x-x_i)^2}\Omega^g_n(x,x_{S\backslash i})\bigg].
\end{align*}
Thus the Virasoro constraints and the loop equations agree if
$\eta^g_{n+1}(x,x_S)$ has no poles at the branch points of $x$.
The Virasoro constraints enable one to calculate non-stationary invariants from stationary invariants but does not determine the stationary invariants \cite{OPaVir}.  The Eynard-Orantin recursions determine the stationary invariants by assembling into a generating function maps of all degrees.  This is necessary to make sense of residues away from the point of expansion.  

\section{A Matrix integral proof of Theorem \ref{th:main} for all genus}  \label{sec:mat}

The Eynard-Orantin invariants come from matrix integrals.  In good cases, the expansion of the invariants $\omega^g_n$ around $\{\x_i=\infty\}$ coincides with the expectation value with respect to a measure on the space of Hermitian matrices of the product of resolvents
\[ W^g_n(x_1,\dots,x_n):=\Big\langle \prod_{i=1}^n{\rm Tr}\frac{1}{x_i-M}\Big\rangle^g_{conn}.\]
The right hand side denotes the connected genus $g$ part of the perturbative expansion of the integral which is expanded over a set of fatgraphs that naturally have genus.  The space of matrices may be a variant of the space of Hermitian matrices.

\textbf{Plancherel Measure.}
There is a natural measure on partitions given by the Plancherel measure, using the dimension of irreducible representations of $S_N$, labeled by partitions $\lambda$ and satisfying $\sum_{|\lambda|=N}\dim(\lambda)^2=N!$.  We can use Eynard-Orantin techniques to study expectation values of the partition function 
\[Z_N(Q)=\sum_{l(\lambda)\leq N}\left(\frac{\dim\lambda}{|\lambda|!}\right)^2Q^{2|\lambda|}.\]
The asymptotic expansion of $Z_N$ as $Q\rightarrow \infty$
\[\ln Z_N(Q)=\sum_gQ^{2-2g}F^g\]
can be solved using the normalisation of the Plancherel measure.  For $N\to\infty$, $\exp(-Q^2)Z_N(Q)\to 1$ so 
\[F^g=\delta_{g,0}.\]

Expectation values of $Z_N$ can be generated by the spectral curve \cite{EynOrd}
\begin{equation*} 
C=\begin{cases}x=z+1/z\\ 
y=\ln{z}
\end{cases}
\end{equation*}
In particular, we will prove that if $M^g_n(b_1,\dots,b_n)$ are the coefficients of the Eynard-Orantin invariant $\omega^g_n$ in the expansion about $x=\infty$, then the $M^g_n$'s can be expressed as stationary Gromov-Witten invariants. 
\begin{proof}[Proof of theorem \ref{th:main}]
We use the expression of Okounkov and Pandharipande (\ref{eq:gwplanch}) that relates Gromov-Witten invariants to the Plancherel measure:
\begin{equation*}
\big\langle \prod_{i=1}^n \tau_{b_i}(\omega)\big\rangle_d^{\bullet } = \sum_{|\lambda |=d} \Big(\frac{\dim\lambda}{d!}\Big)^2\prod_{i=1}^n \frac{\textbf{p}_{b_i+1}(\lambda)}{(b_i+1)!}
\end{equation*}
for
$$\textbf{p} _{k}(\lambda)=\sum_{i=1}^\infty \big[(\lambda_i-i+\frac{1}{2})^{k}-(-i+\frac{1}{2})^k\big]+(1-2^{-k})\zeta(-k).$$
In \cite{EynOrd}, it is shown that the Plancherel measure can be written in the large $N$ limit as a matrix integral:
\begin{equation}
\sum_{l(\lambda)\leq N} \Big(\frac{\dim\lambda}{|\lambda|!}\Big)^2Q^{2|\lambda|}=\frac{Q^{N^2}}{N!}\int_{H_N(\mathcal{C})}e^{-Qtr(V(X))}dX
\end{equation}
where $QV(x)=\ln(\Gamma(Qx))-\ln(\Gamma(-Qx))+i\pi Qx+\ln(Qx)-Qx\ln Q+QA_0$ for some constant $A_0$, $\mathcal{C}$ is a contour in the complex plane surrounding all of the positive integers and $H_N(\mathcal{C})$ is the set of normal $N\times N$ matrices whose eigenvalues lie on the contour $\mathcal{C}$.
\[H_N(\mathcal{C})=\left\{X\Big| X=U^T\Lambda U, \quad UU^T\hspace{-1.5mm}=Id_N,\quad  \Lambda=\text{diag}(\lambda_1,\dots,\lambda_N),\quad \lambda_i\in \mathcal{C} \right\}.\]  It was also found that this matrix model has a rational spectral curve given by
\begin{equation} 
\tilde{C}=\begin{cases}x=\frac{N-1/2}{Q}+z+1/z\\ 
y=\text{ln}(z).
\end{cases}
\end{equation} Thus the $\tilde{M}^g_n$'s of $\tilde{C}$ correspond to expectation values in this integral, or equivalently expectation values of the Plancherel measure.  If $h_i$ represent the $\pi/4$ rotated partitions, $h_i=\lambda_i-i+N$, then

\begin{align*}
W^g_n(x_1,\dots,x_n)&:=\Big\langle \prod_{i=1}^n\sum_j \frac{1}{x_i-h_j/Q}\Big\rangle^g_{conn}\\
&=\sum_{b_1,\dots,b_n=0}^\infty \frac{\tilde{M}^g_n(b_1,\dots,b_n)}{x_1^{b_1+1}\cdots x_n^{b_n+1}}\\
&=\sum_{b_1,\dots,b_n=0}^\infty\frac{1}{x_1^{b_1+1}\cdots x_n^{b_n+1}}\Big[\sum_{l(\lambda)\leq N} \Big(\frac{\dim\lambda}{|\lambda|!}\Big)^2Q^{2|\lambda|-\sum b_i}\prod_{i=1}^n \sum_j h_j^{k_i}\Big]^g_{conn}\\
&=\sum_{b_1,\dots,b_n=0}^\infty\frac{1}{x_1^{b_1+1}\cdots x_n^{b_n+1}}\Big[\sum_{l(\lambda)\leq N} \Big(\frac{\dim\lambda}{|\lambda|!}\Big)^2Q^{2|\lambda|-\sum b_i}\prod_{i=1}^n \sum_j(\lambda_j-j+N)^{b_i}\Big]^g_{conn}.\\
\end{align*}
Since Eynard-Orantin invariants don't change when $x$ changes by a constant, we can consider the curve 
\begin{equation} 
C_2=\begin{cases}x'=z+1/z\\ 
y=\text{ln}(z).
\end{cases}
\end{equation}
The $\omega^g_n$ will be the same, but the expansion around $x'=\infty$ will be different, and we get new $M^g_n$'s:
\begin{align*}
W^g_n(x_1,\dots,x_n)&=\Big\langle \prod_{i=1}^n\sum_j \frac{1}{x_i'+(N-1/2)/Q-h_j/Q}\Big\rangle^g_{conn}\\
&=\sum_{b_1,\dots,b_n=0}^\infty \frac{M^g_n(b_1,\dots,b_n)}{x_1'^{b_1+1}\cdots x_n'^{b_n+1}}\\
%&=\sum_{k_1,\dots,k_n=0}^\infty\frac{1}{x_1'^{k_1+1}\cdots x_n'^{k_n+1}}\sum_{l(\lambda)\leq N} (\frac{\text{dim}(\lambda)}{|\lambda|})Q^{2|\lambda|-k_1-\cdots-k_n}\prod_{i=1}^n \sum_j (h_j-N+\frac{1}{2})^{k_i}\\
%&=\sum_{k_1,\dots,k_n=0}^\infty\frac{1}{x_1'^{k_1+1}\cdots x_n'^{k_n+1}}\sum_{l(\lambda)\leq N} (\frac{\text{dim}(\lambda)}{|\lambda|})Q^{2|\lambda|-k_1-\cdots-k_n}\prod_{i=1}^n \sum_j(\lambda_j-j+\frac{1}{2})^{k_i}\\
%&=\sum_{k_1,\dots,k_n=0}^\infty\frac{1}{x_1'^{k_1+1}\cdots x_n'^{k_n+1}}\sum_{l(\lambda)\leq N} (\frac{\text{dim}(\lambda)}{|\lambda|})Q^{2|\lambda|-k_1-\cdots-k_n}\prod_{i=1}^n \textbf{p} _{k}(\lambda) - \sum_j(-j+\frac{1}{2})^{k_i}-(1-2^{k_i})\zeta(-k_i)\\
%&=\sum_{k_1,\dots,k_n=0}^\infty\frac{1}{x_1'^{k_1+1}\cdots x_n'^{k_n+1}}\sum_{l(\lambda)\leq N} (\frac{\text{dim}(\lambda)}{|\lambda|})Q^{2|\lambda|-k_1-\cdots-k_n}\prod_{i=1}^n \textbf{p} _{k}(\lambda) 
\end{align*}

where
\begin{align*}
M^g_n(b_1,\dots,b_n)&=\Big[\sum_{l(\lambda)\leq N} \Big(\frac{\dim\lambda}{|\lambda|!}\Big)^2Q^{2|\lambda|-b_1-\cdots-b_n}\prod_{i=1}^n \sum_j (h_j-N+\frac{1}{2})^{b_i}\Big]^g_{conn}\\
&=\Big[\sum_dQ^{2d-\sum b_i}\sum_{|\lambda|=d} \Big(\frac{\dim\lambda}{|\lambda|!}\Big)^2\prod_{i=1}^n \sum_j(\lambda_j-j+\frac{1}{2})^{b_i}\Big]^g_{conn}\\
&=\Big[\sum_dQ^{2d-\sum b_i}\sum_{|\lambda|=d} \Big(\frac{\dim\lambda}{|\lambda|!}\Big)^2\prod_{i=1}^n (\textbf{p} _{b_i}(\lambda) + \sum_j(-j+\frac{1}{2})^{b_i}-(1-2^{b_i})\zeta(-b_i))\Big]^g_{conn}\\
&=\Big[\sum_dQ^{2d-\sum b_i}\prod_{i=1}^n b_i!\big\langle \prod_{i=1}^n \tau_{b_i-1}(\omega)\big\rangle_d^{\bullet }\Big]^g_{conn}+0.
\end{align*}
Using the fact that $$\sum_{j=1}^\infty \sum_{k=0}^\infty \frac{(-j+\frac{1}{2})^kz^k}{k!}=\sum_{j=1}^\infty e^{z(-j+\frac{1}{2})}=e^{\frac{z}{2}}(\frac{1}{1-e^{-z}}-1)=\frac{2}{\text{sinh}(z/2)}=\sum_{k=0}^\infty \frac{(1-2^{-k})\zeta(-k)}{k!}z^{k}$$ and comparing coefficients.  Note that these extra components of $\textbf{p}_{k}$ are only used in \cite{OPaGro} so that evaluations can be made for finite partitions without the need to evaluate infinite series.  In an expectation value they will have no effect.  Since $2g-2+2d=\sum_{i=1}^n(b_i-1)$ defines the degree, taking the genus $g$ component involves taking only one term, and we extract the coefficient of $Q^{2-2g-n}$.  The connected part then gives connected Gromov-Witten invariants:
\begin{equation*}
M^g_n(b_1,\dots,b_n)=\prod_{i=1}^n b_i!\big\langle \prod_{i=1}^n \tau_{b_i-1}(\omega)\big\rangle^{g}.
\end{equation*}
\end{proof}

%\emph{Remark.} It is interesting that in the discrete measure case, the shifted symmetric functions have replaced an insertion of $tr(X^b)$ inside the continuous matrix integral.

\section{Formulae} \label{sec:formulae}

The following values for $N^g_{n,k}$ were computed with the method of \cite{NScPol} and using lemma \ref{cortransform} we can compute the corresponding $m^g_{n,k}$'s.
\small
%\begin{table}[h!]  \label{tab:poly3}
%\caption{$y=\ln{z}$}
\begin{center}%{$y=\ln{z}$}
\begin{spacing}{1.6}  
\begin{tabular}{||l|c|c|c|c||} 
\hline\hline

{\bf g} &{\bf n}&{\bf k}&$\bf N^g_{n,k}(b_1,...,b_n)$&$\bf m^g_{n,k}(b_1,\dots,b_n)$\\ \hline

0&3&0,2&$0$&$0$\\ \hline
0&3&1,3&$1$&$1$\\ \hline
1&1&0&$0$&$0$\\ \hline
1&1&1&$\frac{1}{48}(b_1^2-3)$&$\frac{1}{24}(b_1-2)$\\ \hline
0&4&0&$\frac{1}{4}(b_1^2+b_2^2+b_3^2+b_4^2)$&$\frac{1}{2}(b_1+b_2+b_3+b_4)$\\ \hline
0&4&1,3&$0$&$0$\\ \hline
0&4&2&$\frac{1}{4}(b_1^2+b_2^2+b_3^2+b_4^2-2)$&$\frac{1}{2}(b_1+b_2+b_3+b_4-2)$\\ \hline
0&4&4&$\frac{1}{4}(b_1^2+b_2^2+b_3^2+b_4^2)$&$\frac{1}{2}(b_1+b_2+b_3+b_4-2)$\\ \hline
1&2&0&$\frac{1}{384}(b_1^2+b_2^2-8)(b_1^2+b_2^2)$&$\frac{1}{48}(b_1^2+b_2^2+b_1b_2-3(b_1+b_2))$\\ \hline
1&2&1&$0$&$0$\\ \hline
1&2&2&$\frac{1}{384}(b_1^2+b_2^2-6)(b_1^2+b_2^2-2)$&$\frac{1}{48}(b_1^2+b_2^2+b_1b_2-4(b_1+b_2)+5)$\\ \hline
1&3&0,2&$0$&$0$\\ \hline
1&3&1&$\frac{1}{4608}\big(\sum_{i=1}^3 b_i^6\hspace{-0.5mm}-\hspace{-0.5mm}20b_i^4\hspace{-0.5mm}+\hspace{-0.5mm}94b_i^2+6\sum_{i\neq j}b_i^2b_j^2(b_i^2-5)$

&$\frac{1}{96}\big(\sum_{i=1}^3 b_i^3\hspace{-0.5mm}-\hspace{-0.5mm}7b_i^2\hspace{-0.5mm}+\hspace{-0.5mm}14b_i+\sum_{i\neq j}b_ib_j(2b_i-5)$\\ 

&&&$+12b_1^2b_2^2b_3^2+3b_1^4-63b_1^2-15\big)$

&$+2b_1b_2b_3+b_1^2-5b_1-4\big)$\\ \hline

1&3&3&$\frac{1}{4608}\big(\sum_{i=1}^3 b_i^6\hspace{-0.5mm}-\hspace{-0.5mm}17b_i^4\hspace{-0.5mm}+\hspace{-0.5mm}103b_i^2+6\sum_{i\neq j}b_i^2b_j^2(b_i^2-5)$

&$\frac{1}{96}\big(\sum_{i=1}^3 b_i^3\hspace{-0.5mm}-\hspace{-0.5mm}8b_i^2\hspace{-0.5mm}+\hspace{-0.5mm}23b_i+2\sum_{i\neq j}b_ib_j(b_i-3)$\\ 

&&&$+12b_1^2b_2^2b_3^2-129\big)$

&$+2b_1b_2b_3-26\big)$\\ \hline
2&1&0&$0$&$0$\\ \hline
2&1&1&$\frac{1}{2^{16}3^35}(b_1^2-1)^2(5b_1^4-186b_1^2+1605)$&$\frac{1}{2^{9}3^25}(b-1)^2(b-4)(5b-22)$\\\hline
3&1&0&$0$&$0$\\ \hline
3&1&1&$\frac{1}{2^{25}3^65^27}(b^2\hspace{-1mm}-\hspace{-1mm}1)^2(b^2\hspace{-1mm}-\hspace{-1mm}3)^2(5b^6\hspace{-1mm}-\hspace{-1mm}649b^4\hspace{-1mm}+\hspace{-1mm}27995b^2\hspace{-1mm}-\hspace{-1mm}394695)$&$\frac{1}{2^{14}3^457}(b\hspace{-1mm}-\hspace{-1mm}1)^2(b\hspace{-1mm}-\hspace{-1mm}3)^2(b\hspace{-1mm}-\hspace{-1mm}6)(35b^2\hspace{-1mm}-\hspace{-1mm}462b\hspace{-1mm}+\hspace{-1mm}1528)$\\
\hline\hline
\end{tabular} 
\end{spacing}
\end{center}
%\end{table}

\normalsize

We can use theorem \ref{th:main}, the above table and the divisor equation (\ref{Mdivisor}) to compute the following expressions for stationary Gromov-Witten invariants of $\bp^1$.
\begin{itemize}
\item{Genus zero two-point invariants: \begin{align*}
\langle \tau_{2u_1}(\omega)\tau_{2u_2}(\omega)\rangle^{g=0} &=\frac{1}{u_1!^2u_2!^2} \frac{1}{(u_1+u_2+1)}\\
\langle \tau_{2u_1-1}(\omega)\tau_{2u_2-1}(\omega)\rangle^{g=0} &=\frac{u_1u_2}{u_1!^2u_2!^2}\frac{1}{(u_1+u_2)}
\end{align*}}
\item{Genus zero three-point invariants: \begin{align*}
\langle \tau_{2u_1}(\omega)\tau_{2u_2}(\omega)\tau_{2u_3}(\omega) \rangle^{g=0} &= \frac{1}{u_1!^2u_2!^2u_3!^2}\\
\langle \tau_{2u_1}(\omega)\tau_{2u_2-1}(\omega)\tau_{2u_3-1}(\omega) \rangle^{g=0}&= \frac{u_2u_3}{u_1!^2u_2!^2u_3!^2}
\end{align*}}
\item{Genus zero four-point invariants:
\begin{align*}
\langle \prod_{i=1}^4 \tau_{2u_i}(\omega)\rangle^{g=0}&=\frac{1}{\prod_{i=1}^4u_i!^2}(u_1+u_2+u_3+u_4+1)\\
\langle \prod_{i=1}^2 \tau_{2u_i}(\omega) \prod_{i=3}^4 \tau_{2u_i-1}(\omega)\rangle^{g=0}&=\frac{u_3u_4}{\prod_{i=1}^4u_i!^2}(u_1+u_2+u_3+u_4)\\
\langle \prod_{i=1}^4 \tau_{2u_i-1}(\omega)\rangle^{g=0}&=\prod_{i=1}^4\frac{u_i}{u_i!^2}(u_1+u_2+u_3+u_4)\\
\end{align*}}
\item{Repeatedly applying the divisor equation gives the even, genus zero $n$ point invariants:
\begin{align*}
\langle \prod_{i=1}^n \tau_{2u_i}(\omega)\rangle^{g=0}&=\frac{1}{\prod_{i=1}^nu_i!^2}(\sum_{i=1}^nu_i+1)^{n-3}\\
\end{align*}}
\item{Genus one one-point invariants: \begin{equation*}
\langle \tau_{2u}(\omega)\rangle^{g=1} =\frac{1}{24u!^2}(2u-1)
\end{equation*}}
\item{Genus one two-point invariants: 
\begin{align*}
\langle \tau_{2u_1}(\omega)\tau_{2u_2}(\omega)\rangle^{g=1} &=\frac{1}{24u_1!^2u_2!^2}(2u_1^2+2u_2^2+2u_1u_2-u_1-u_2)\\
\langle \tau_{2u_1-1}(\omega)\tau_{2u_2-1}(\omega)\rangle^{g=1} &=\frac{u_1u_2}{24u_1!^2u_2!^2}(2u_1^2+2u_2^2+2u_1u_2-3u_1-3u_2)
\end{align*}}
\item{Genus one three-point invariants:
\begin{align*}
\langle \tau_{2u_1}(\omega)\tau_{2u_2}(\omega)\tau_{2u_3}(\omega) \rangle^{g=1} &= \frac{1}{24\prod_{i=1}^3u_i!^2}\Big(\sum_{i=1}^3 2u_i^3\hspace{-1mm}-\hspace{-1mm}u_i^2+\sum_{i\neq j}u_iu_j(4u_i-1) +4u_1u_2u_3\Big)\\
\langle \tau_{2u_1}(\omega)\tau_{2u_2-1}(\omega)\tau_{2u_3-1}(\omega) \rangle^{g=1} &= \frac{u_2u_3}{24\prod_{i=1}^3u_i!^2}\Big(\sum_{i=1}^3 2u_i^3-5u_i^2+3u_i+\sum_{i\neq j}u_iu_j(4u_i-3) \\&+2u_1^2-3u_1-2u_2u_3+4u_1u_2u_3\Big)
\end{align*}}
\item{Genus two one-point invariants:
\begin{equation*}
\langle \tau_{2u}(\omega)\rangle^{g=2}=\frac{1}{2^{7}3^25u!^2}u^2(2u-3)(10u-17)
\end{equation*}}
\item{Genus three one-point invariants:
\begin{equation*}
\langle \tau_{2u}(\omega)\rangle^{g=3}=\frac{1}{2^{10}3^457u!^2}u^2(u-1)^2(2u-5)(140u^2-784u+1101).
\end{equation*}}
\end{itemize}

\end{document}